\crefname{equation}{}{}
\crefname{figure}{{\sc Figure}}{{\sc Figure}}
\crefname{subsection}{Subsection}{Subsections}
\newtheorem{theorem}{Theorem}[section]
\newtheorem{proposition}[theorem]{Proposition}
\newtheorem{lemma}[theorem]{Lemma}
\newtheorem{corollary}[theorem]{Corollary}
\newtheorem{claim}[theorem]{Claim}
\newtheorem*{claim*}{Claim}
\theoremstyle{definition}
\newtheorem{definition}[theorem]{Definition}
\newtheorem{remark}[theorem]{Remark}
\newcommand{\F}{{\mathbb F}}
\newcommand{\Z}{{\mathbb Z}}
\newcommand{\Tr}{\operatorname{Tr}}
\numberwithin{equation}{section} 
\numberwithin{figure}{section}
\numberwithin{table}{section}
\title{Paley-like quasi-random graphs arising from polynomials}
\author{Seoyoung Kim}
\address{Departement Mathematik und Informatik, Universit\"at Basel, Spiegelgasse 1, 4051 Basel, Switzerland}
\email{seoyoung.kim@unibas.ch}
\author{Chi Hoi Yip}
\address{School of Mathematics\\ Georgia Institute of Technology\\Atlanta, GA 30332\\ United States}
\email{cyip30@gatech.edu}
\author{Semin Yoo}
\address{Discrete Mathematics Group \\ Institute for Basic Science \\ 55 Expo-ro Yuseong-gu, Daejeon 34126 \\ South Korea}
\email{syoo19@ibs.re.kr}
\subjclass[2020]{05C48, 05C50, 11B30, 11T06, 05D10}
\keywords{quasi-random graph, Paley graph, clique number, Diophantine tuple, polynomial}
\begin{document}

\begin{abstract}
Paley graphs and Paley sum graphs are classical examples of quasi-random graphs. In this paper, we provide new constructions of families of quasi-random graphs that behave like Paley graphs but are neither Cayley graphs nor Cayley sum graphs. These graphs give a unified perspective of studying various graphs arising from polynomials over finite fields, such as Paley graphs, Paley sum graphs, and graphs arising from Diophantine tuples and their generalizations. 
We also obtain lower bounds on the clique and independence numbers of the graphs in these families. 
\end{abstract}

\maketitle

\section{Introduction}\label{sec: intro}
Throughout this paper, we let $q$ be an odd prime power,  $\F_q$ the finite field with $q$ elements, and $\F_q^*=\F_q \setminus \{0\}$. When $q \equiv 1 \pmod 4$, the {\em Paley graph} $P_q$ is the graph whose vertices are the elements of $\F_q$, such that two distinct vertices are adjacent if and only if their difference is a square in $\F_q$. {\em Paley sum graphs} are defined similarly, where two distinct vertices are adjacent when their sum is a square in $\F_q$. Paley graphs and Paley sum graphs are classical examples of quasi-random graphs \cite{C91}. The main purpose of this paper is to construct new Paley-like graphs using polynomials defined over finite fields and to show their quasi-random property. 
We also give a nontrivial lower bound on the clique number and independence number of the graphs.

Let $f \in \F_q[x,y]$ be a polynomial. We define a graph $X_{f,q}$ on $\F_q$, such that two distinct vertices $a,b\in \F_q$ are adjacent if and only if $f(a,b)$ is a square in $\F_q$.
To ensure that the graph $X_{f,q}$ is undirected, we assume that for $a,b \in \F_q$, $f(a,b)$ is a square if and only if $f(b,a)$ is a square. Furthermore, some conditions on the polynomial $f$ have to be imposed so that $X_{f,q}$ could behave like Paley graphs. 
At first glance, the only assumption needed for defining Paley-like graph $X_{f,q}$ is that $f(x,y)$ is not a constant multiple of a square of a polynomial; otherwise, the graph $X_{f,q}$ is complete or almost empty. However, a more refined assumption is required, and we introduce the following terminology. 
Following Gyarmati and S\'{a}rk\"{o}zy \cite[Proposition 1]{GS08}, we write 
\[f(x,y)=F(x)G(y)H(x,y),
\]
where $H(x,y)$ is the \emph{primitive kernel} of $f(x,y)$ (see \cref{sec: prelim}). It turns out that if $H(x,y)$ is a constant multiple of a square of a polynomial but $f(x,y)$ itself is not, then the graph $X_{f,q}$ still has a simple structure; see \cref{lem:natural}. 
This observation motivates the following definition of admissible polynomials.
\begin{definition}
Given $f \in \F_{q}[x,y]$, we write $f(x,y)=F(x)G(y)H(x,y)$ with the primitive kernel $H(x,y)$ of $f(x,y)$.
Then we say that $f$ is \emph{admissible} if it satisfies the following:
\begin{itemize}
    \item $f$ induces an undirected graph, that is, for each $u,v \in \F_q$, $f(u,v)$ is a square in $\F_q$ if and only if $f(v,u)$ is a square in $\F_q$;
    \item $H(x,y)$ is not a constant multiple of a square of a polynomial in $\F_q[x,y]$.
\end{itemize}
\end{definition}
Note that graphs of the form $X_{f,q}$ encompass a broad spectrum of well-known graphs as their special cases. 
For example, Paley graphs correspond to the case $f(x,y)=x-y$ for $q \equiv 1 \pmod 4$, and Paley sum graphs are realized when $f(x,y)=x+y$. \footnote{In some definitions of Paley sum graphs, two vertices are adjacent when their sum is a non-zero square. In some other references, loops are also allowed. These ambiguities do not affect the quasi-random property of Paley sum graphs.} 
Moreover, some of them are related to some well-known objects in number theory, for example, in the case of $f(x,y)=xy+1$. To be precise, a set $\{a_{1},a_{2},\ldots , a_{m}\}$ of distinct positive integers is called a \textit{Diophantine $m$-tuple} if the product of any two distinct elements in the set is one less than a perfect square. More generally, for an admissible $f$, a clique in the graph $X_{f,q}$ corresponds to an $f$-Diophantine set over $\F_q$. We refer to  \cref{subsec: Dio pules and graphs} for more background on Diophantine tuples, their generalizations, and related graphs.

In this paper, we explore the quasi-random property of graphs $X_{f,q}$ for admissible polynomials $f$. Roughly speaking, a combinatorial structure is called `quasi-random' if it is deterministic, but behaves similarly to a random structure. The systematic study of the quasi-randomness of graphs was initiated by Thomason \cite{T87b, T87a} and Chung, Graham, and Wilson \cite{CGW89}, independently. Thomason uses the terminology ``jumbled” as follows. Let $p,\alpha$ be positive real numbers with $p<1$. A graph $G=(V(G),E(G))=(V,E)$ is said to be \textit{$(p,\alpha)$-jumbled}  if every subset of vertices $S\subset V(G)$ satisfies:
\[\left| e(G[S]) - p\binom{|S|}{2} \right| \le \alpha |S|,\]
where $e(G[S])$ is the number of edges in the subgraph $G[S]$ induced by $S$ in $G$. Thus, the Erdős–Rényi random graphs $G(n,p)$ are almost surely $(p,\alpha)$-jumbled with $\alpha=O(\sqrt{np})$ \cite[Subsection 2.2]{KS06}. 
Chung, Graham, and Wilson \cite{CGW89} showed that several seemingly independent concepts of quasi-randomness of graphs are equivalent.
One of the concepts, denoted by property $P_4$ in \cite{CGW89}, is given as follows. 

\begin{definition}\label{def: quasirandom}
Let $\mathcal{G}$ be a family of graphs. Then $\mathcal{G}$ is a family of \textit{quasi-random graphs} if for each $G\in\mathcal{G}$ and each $S \subset V(G)$, we have
\begin{equation}\label{eq: quasi-random}
P_4: \quad e(S):=e(G[S])=\frac{1}{4}|S|^2+o(n^2)
\end{equation}
as $|V(G)|=n \to \infty$.   
\end{definition}

In the following discussion, we always assume $G$ is a graph with $n$ vertices. We remark that quasi-random graphs, which imitate random graphs with a more general edge density $p = p(n)$, have also been studied extensively; see, for example, \cite{CG02, T87a}. 
In \cite{CGW89}, the edge density $p$ is restricted to be near $1/2$, and we shall adopt this convention in this paper.
We also note that equation \cref{eq: quasi-random} is equivalent to 
\begin{equation}\label{eq: edge}
P_4': \quad e(S,T)=\frac{1}{2}|S||T|+o(n^2),  
\end{equation}
where $S,T\subset V$ and $e(S,T)$ is the number of edges of $G$ such that one end point is in $S$, and the other one is in $T$. 
Here, the edges whose endpoints are in $S \cap T$ are counted twice.
We refer to \cref{sec: equivalentdef} for a short proof as well as some equivalent definitions of quasi-random graphs introduced in \cite{CGW89}.

It is of special interest to construct quasi-random graphs for intriguing applications. 
Most known constructions of quasi-random graphs are Cayley graphs or Cayley sum graphs, which are described in \cref{subsec: constructions}. We will show that the graphs $X_{f,q}$ are quasi-random, and it is worthwhile to mention that most $X_{f,q}$ are neither Cayley graphs nor Cayley sum graphs.

For many applications (including \cref{cor: xfq} below for clique number and independence number), one needs to measure the quasi-randomness of graphs quantitatively. 
More precisely, we need to make the error term in equation \cref{eq: edge} effective. In the spirit of the expander mixing lemma, we introduce the following definition \footnote{A graph $G$ satisfying inequality~\eqref{eq: beta-quasi} is sometimes known as a $(\frac{1}{2}, cn^\theta)$-bi-jumbled graph in the literature; see for example \cite{KR07}. }.

\begin{definition} Let $0<\theta<1$ and let $\mathcal{G}$ be a family of graphs such that the number of vertices of graphs in $\mathcal{G}$ is unbounded. We say $\mathcal{G}$ is \emph{a family of quasi-random graphs with property $QR(\theta)$}, if there is a constant $c>0$, such that for all $G\in\mathcal{G}$, 
\begin{equation} \label{eq: beta-quasi}
\left|e(S,T)-\frac{|S||T|}{2}\right|\leq cn^\theta  \sqrt{|S||T|} 
\end{equation}
holds for any subsets $S,T \subset V(G)$, where $n=|V(G)|$.
\end{definition}

We recall that $\theta=1/2$ in the case of random graphs. Motivated by this fact, in \cref{sec: quasirandom}, we show that there is no family of quasi-random graphs with property $QR(\theta)$ with $\theta<1/2$ (\cref{prop: onehalf}). 
Thus, a family of quasi-random graphs having property $QR(1/2)$ is essentially the best possible, so it would be interesting to find families with property $QR(1/2)$. 
Next, we give a sufficient condition for checking the property $QR(\theta)$ by proving a quantitative version of the expander mixing lemma for quasi-random graphs in \cref{prop:expander mixing}, and \cref{cor: expander2}.
In \cref{sec: polynomial}, we prove the following result.

\begin{theorem}\label{thm: 3/4 intro}
Let $d \geq 1$. Let $\mathcal{X}_d$ be the family consisting of all graphs $X_{f,q}$, where $q$ is an odd prime power and $f \in \F_q[x,y]$ is an admissible polynomial with degree $d$. Then $\mathcal{X}_d$ is a family of quasi-random graphs with property $QR(3/4)$.
\end{theorem}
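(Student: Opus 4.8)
The plan is to express the edge count through the quadratic character, reduce the problem to a spectral (operator-norm) bound via the quantitative expander mixing lemma already set up in the paper, and then settle that bound by the trace method together with Weil's character sum estimate. Let $\chi$ be the quadratic character of $\F_q$, extended by $\chi(0)=0$, and write $G=X_{f,q}$ with $n=q$. For distinct $a,b$ the adjacency indicator equals $\tfrac12\big(1+\chi(f(a,b))\big)$ up to the contribution of the locus $f(a,b)=0$, which consists of $O_d(q)$ pairs. Hence the deviation of $G$ from a uniform edge density $1/2$ is governed by the real matrix $M=(\chi(f(a,b)))_{a,b\in\F_q}$, and by \cref{prop:expander mixing} and \cref{cor: expander2}, together with the trivial bound $\big|e(S,T)-\tfrac12|S|\,|T|\big|\le|S|\,|T|$ which already suffices when $|S|\,|T|\le\sqrt q$, it is enough to prove the estimate $\|M\|\le c\,q^{3/4}$ for a constant $c=c(d)$; the diagonal and the locus $f=0$ contribute only $O_d(q)$ and are absorbed.

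To bound $\|M\|$ I would use the fourth-moment (trace) method. For any real matrix,
\[
\|M\|^4 \le \Tr\big((MM^{\top})^2\big) = \sum_{a,a'\in\F_q}\Big(\sum_{b\in\F_q}\chi\big(f(a,b)f(a',b)\big)\Big)^2 .
\]
Call a pair $(a,a')$ \emph{bad} if $f(a,y)f(a',y)\in\F_q[y]$ is identically zero or a nonzero constant multiple of a square, and \emph{good} otherwise. For a good pair the one-variable polynomial $f(a,y)f(a',y)$ has degree $O_d(1)$ and is not a constant times a square, so Weil's theorem gives $\big|\sum_b\chi(f(a,b)f(a',b))\big|\le C(d)\sqrt q$; for a bad pair the inner sum is at most $q$. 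Thus the displayed trace is at most $C(d)^2q\cdot q^2 + q^2\cdot\#\{\text{bad pairs}\}$, and the entire argument reduces to showing that the number of bad pairs is $O_d(q)$, which then yields $\|M\|^4=O_d(q^3)$, i.e. $\|M\|=O_d(q^{3/4})$.

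The crux, and the step I expect to be the main obstacle, is the bad-pair count, and this is exactly where admissibility enters. Writing $f=F(x)G(y)H(x,y)$ with primitive kernel $H$, we have $f(a,y)f(a',y)=F(a)F(a')G(y)^2H(a,y)H(a',y)$, so a pair is bad essentially iff $H(a,y)H(a',y)$ is a constant times a square. I would first prove, over the field $\F_q(x,x')$ of rational functions in two independent transcendentals, that $H(x,y)H(x',y)$ is \emph{not} a constant multiple of a square in $\F_q(x,x')[y]$: the odd-multiplicity irreducible factors of $H$ form a nonconstant square-free polynomial $R(x,y)$, nonconstant precisely because $f$ is admissible, each of whose factors genuinely depends on both variables. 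A root $y=\rho$ of $R(x,y)$ is therefore a nonconstant algebraic function of $x$ alone, while a root of $R(x',y)$ depends only on $x'$; as $x,x'$ are independent these root sets are disjoint, so $R(x,y)R(x',y)$ is square-free and nonconstant, hence not a constant times a square. Taking the discriminant in $y$ of the square-free part of $H(x,y)H(x',y)$ then produces a nonzero polynomial $\Phi(x,x')$ of degree $O_d(1)$ vanishing at every off-diagonal bad pair, and by the Schwartz--Zippel bound such a $\Phi$ has $O_d(q)$ zeros in $\F_q^2$; adding the $q$ diagonal pairs gives the required $O_d(q)$ estimate.

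I expect the delicate points to be making the specialization argument uniform, namely ensuring $\Phi$ is genuinely nonzero with $q$-independent degree, and verifying that every implied constant depends only on $d=\deg f$ and not on $q$, so that the conclusion is property $QR(3/4)$ with a single constant $c$ across the whole family $\mathcal{X}_d$. It is instructive to contrast this with \cref{lem:natural}: when $f$ is not admissible the kernel $H$ is a constant times a square, whence $H(a,y)H(a',y)$ is a square for \emph{all} $a,a'$, every pair is bad, and the trace method correctly breaks down, which is consistent with $X_{f,q}$ then failing to be quasi-random.
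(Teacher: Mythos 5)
Your overall architecture is sound and, modulo packaging, is the paper's own: reduce the edge deviation to a spectral bound on the quadratic-character matrix, prove that bound by a fourth-moment computation in which Weil's estimate handles the good pairs, and reduce everything to showing that the bad pairs number $O_d(q)$. (The paper does the same in different clothing: it estimates the entries of $A^2$ in \cref{prop:A^2} and converts this into $\lambda_1,\lambda_2$ via the Hoffman--Wielandt inequality, \cref{lem: HW}, instead of bounding $\Tr\big((MM^{\top})^2\big)$.) One secondary correction: the reduction cannot be routed through \cref{prop:expander mixing} or \cref{cor: expander2} from $\|M\|=O_d(q^{3/4})$ alone, since those results require degree statistics, and the only bound you can extract from the operator norm is $\sum_v\big(\deg(v)-\tfrac q2\big)^2\lesssim\|M\mathbf{1}\|^2+O_d(q^2)\le \|M\|^2q+O_d(q^2)=O_d(q^{5/2})$, which feeds back only $QR(7/8)$. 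Instead, bound $\big|e(S,T)-\tfrac12|S||T|\big|$ directly by $\tfrac12\big|\mathbf{1}_S^{\top}M\mathbf{1}_T\big|\le\tfrac12\|M\|\sqrt{|S||T|}$ plus the diagonal and zero-locus corrections (or import the row-sum estimates, i.e.\ \cref{cor:badu} plus Weil, which is what the paper's \cref{prop:A^2} does). This is routine and not the real problem.

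The genuine gap is the bad-pair count, exactly the step you flagged as delicate: your argument hinges on the implication ``squarefree over $\F_q(x,x')$ implies nonzero discriminant,'' and in characteristic $p$ this is false, because a squarefree polynomial can be inseparable, in which case $\mathrm{disc}_y$ vanishes identically and Schwartz--Zippel yields nothing. This is not a fringe case inside $\mathcal{X}_d$: take $q=p^m$ and $f(x,y)=(y^p-x)(x^p-y)$, which is symmetric, equals its own primitive kernel, and is squarefree, hence admissible of degree $d=2p$. Here $R=H=f$, and over $\overline{\F_q(x)}$ one has $y^p-x=(y-x^{1/p})^p$, so $\mathrm{disc}_y\big(R(x,y)R(x',y)\big)$ is identically zero and your $\Phi\equiv 0$. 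The needed bound is nevertheless true for this $f$: a short computation over $\F_q$ (where $y^p-a=(y-a^{1/p})^p$ with $a^{1/p}=a^{p^{m-1}}$) shows that an off-diagonal pair $(a,a')$ is bad only if either $a,a'\in\F_{p^2}\cap\F_q$, or $a=a'^{p^2}$ and $a'=a^{p^2}$, giving $O_d(1)$ such pairs---but your $\Phi$ cannot detect this, and no discriminant-type polynomial of the kind you construct can. The fix is precisely what the paper supplies: \cref{lem:Hsquare} (Gyarmati--S\'{a}rk\"{o}zy) bounds by $d^2+d$ the number of $u$ for which $H(x,u)$ is a constant multiple of a square, and \cref{cor:badpairs} then handles all remaining bad pairs by a common-root/gcd count resting on \cref{lem:NIZ}; this uses no discriminants and is valid in every characteristic. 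With that substitution (and the direct bilinear-form reduction above), your trace-method proof goes through and becomes essentially equivalent to the paper's argument.
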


Furthermore, we also find that a subfamily of graphs in $\mathcal{X}_d$ has property $QR(1/2)$. This leads to the following construction of a new family of quasi-random graphs with property $QR(1/2)$.
 
\begin{theorem}\label{thm: construction intro}
Let $d \geq 1$. Let $\mathcal{H}_d$ be the family consisting of all graphs $X_{h,q}$ and $X_{\widetilde{h},q}$, where $q$ is an odd prime power, $h(x,y)=f(g(x),g(y))$ and $\widetilde{h}(x,y)=g(x)g(y)f(g(x),g(y))$, where $f \in \F_q[x,y]$ is an admissible polynomial with degree in $x,y$ both $1$, and $g \in \F_q[x]$ is a polynomial with degree $d$. Then $\mathcal{H}_d$ is a family of quasi-random graphs with property $QR(1/2)$.
\end{theorem}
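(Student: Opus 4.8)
The plan is to realize property $QR(1/2)$ as a spectral bound on a single matrix. Let $\chi$ be the quadratic character of $\F_q$ and let $C$ be the $q\times q$ matrix with entries $C_{ab}=\chi(h(a,b))$ for $a,b\in\F_q$. Writing the edge indicator of a nonzero square as $\tfrac12(1+\chi(\cdot))$, one gets
\[
e(S,T)-\tfrac12|S||T|=\tfrac12\,\mathbf 1_S^{\top}C\,\mathbf 1_T+O_d\!\bigl(\sqrt{|S||T|}\bigr),
\]
where the correction collects the diagonal $a=b$ and the pairs with $h(a,b)=0$; each $a$ contributes at most $d$ such $b$, so the correction is $O_d(\min(|S|,|T|))=O_d(\sqrt{|S||T|})$. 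Since $|\mathbf 1_S^{\top}C\,\mathbf 1_T|\le\|C\|\sqrt{|S||T|}$, the quantitative expander mixing lemma (\cref{prop:expander mixing}, \cref{cor: expander2}) reduces everything to showing $\|C\|=O(\sqrt q)$, where $\|\cdot\|$ is the spectral norm.

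First I would factor $C$ through the base polynomial. Since $h(a,b)=f(g(a),g(b))$, we have $C_{ab}=M_{g(a),g(b)}$ where $M$ is the $q\times q$ matrix $M_{w,w'}=\chi(f(w,w'))$, so $C=P^{\top}MP$ with $P$ the $0$–$1$ evaluation matrix $P_{w,a}=[\,g(a)=w\,]$. Because $g$ has degree $d$, the matrix $PP^{\top}$ is diagonal with entries $|g^{-1}(w)|\le d$, hence $\|P\|\le\sqrt d$ and $\|C\|\le\|P\|^2\|M\|\le d\,\|M\|$. For the twisted graph $X_{\widetilde h,q}$ one has $\chi(\widetilde h(a,b))=\chi(g(a))\chi(g(b))\chi(f(g(a),g(b)))$, so its defining matrix is $\Lambda C\Lambda$ with $\Lambda$ the diagonal matrix $\Lambda_{aa}=\chi(g(a))\in\{0,\pm1\}$; as $\|\Lambda\|\le1$ this leaves the bound intact, and both families are handled at once.

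The crux is the estimate $\|M\|=O(\sqrt q)$, and here the hypothesis that $f$ has degree $1$ in each variable is decisive. Writing $f(x,y)=\alpha xy+\beta x+\gamma y+\delta$ and setting $\psi(w):=\chi(\alpha w+\gamma)$, I would compute the Gram matrix entrywise:
\[
(MM^{\top})_{w_1,w_2}=\sum_{w'}\chi\bigl(f(w_1,w')f(w_2,w')\bigr).
\]
For fixed $w_1,w_2$ this argument is a quadratic in $w'$ with leading coefficient $(\alpha w_1+\gamma)(\alpha w_2+\gamma)$ and discriminant $(\alpha\delta-\beta\gamma)^2(w_1-w_2)^2$; admissibility forces $\alpha\delta-\beta\gamma\neq0$, so for $w_1\neq w_2$ the discriminant is nonzero and the classical evaluation $\sum_t\chi(at^2+bt+c)=-\chi(a)$ yields off-diagonal entry $-\psi(w_1)\psi(w_2)$, while the diagonal entry equals $q-\psi(w)^2$ (all degenerate subcases, including $\alpha w+\gamma=0$ and $\alpha=0$, are checked to agree). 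Thus $MM^{\top}=qI-\psi\psi^{\top}$ \emph{exactly}: a rank-one perturbation of $qI$ whose eigenvalues are $q$ with multiplicity $q-1$ and $q-\|\psi\|^2\in\{0,1\}$. Every eigenvalue is therefore at most $q$, so $\|M\|=\sqrt q$, giving $\|C\|\le d\sqrt q$ and hence property $QR(1/2)$.

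I expect this spectral identity to be the main obstacle and the whole point of the degree-$1$ hypothesis. For a general admissible $f$ of degree $d$ the off-diagonal Gram entries are only controlled by Weil's bound at size $O(\sqrt q)$, and summing $q$ of them per row forces $\|M\|=O(q^{3/4})$ — precisely the loss behind the weaker \cref{thm: 3/4 intro}. Restricting to $f$ of degree $1$ in each variable collapses each off-diagonal Gram entry to a single explicitly evaluable quadratic character sum of size $O(1)$ carrying a rank-one structure, and it is this exact cancellation that upgrades $QR(3/4)$ to the optimal $QR(1/2)$. The remaining work is bookkeeping: verifying the identity $MM^{\top}=qI-\psi\psi^{\top}$ survives the vanishing of leading coefficients and the Paley/Paley-sum case $\alpha=0$, and checking that the zero-set and diagonal corrections are genuinely absorbed into the error.
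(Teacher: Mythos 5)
Your proposal is correct, and it takes a genuinely different route from the paper. The paper proceeds in two steps: it first proves $QR(1/2)$ for the bidegree-$(1,1)$ base graphs $X_{f,q}$ and $X_{xyf,q}$ (\cref{cor:deg1}) by showing the off-diagonal entries of $A^2$ are $q/4+O(1)$ and then running a Hoffman--Wielandt perturbation argument on the induced subgraph obtained by deleting $O_d(1)$ bad vertices (\cref{prop: 1/2}), feeding the resulting eigenvalue estimates into its expander mixing lemma (\cref{prop:expander mixing}); it then transfers $QR(1/2)$ to the compositions with $g$ by a combinatorial argument, partitioning $\F_q$ into $d$ pieces on which $g$ is injective (\cref{lem:fcomposeg}). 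You instead work throughout with the quadratic-character matrix rather than the adjacency matrix: your exact Gram identity $MM^{\top}=qI-\psi\psi^{\top}$ (which checks out, including the degenerate cases $\alpha w+\gamma=0$ and $\alpha=0$) replaces Hoffman--Wielandt and the bad-vertex surgery by an exact rank-one computation; the factorization $C=P^{\top}MP$ with $\|P\|^{2}\le d$ replaces the partition argument of \cref{lem:fcomposeg}; and the conjugation $\Lambda C\Lambda$ disposes of the twisted graphs in one line. Both proofs ultimately rest on the same two inputs---the evaluation $\sum_{t}\chi(at^{2}+bt+c)=-\chi(a)$ for nonzero discriminant \cite[Theorem 5.48]{LN97}, and the admissibility consequence $\alpha\delta-\beta\gamma\neq 0$ (which you assert; the paper proves it in two lines at the start of \cref{cor:deg1})---but your route is shorter and yields cleaner, explicit constants, while the paper's \cref{prop: 1/2} and \cref{lem:fcomposeg} are formulated to apply to other subfamilies of $\mathcal{X}_d$ beyond the bidegree-$(1,1)$ case. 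Two pieces of bookkeeping deserve mention. First, your appeal to \cref{prop:expander mixing} and \cref{cor: expander2} is unnecessary (and slightly off, since those statements concern adjacency-matrix eigenvalues, not $C$): the bound $|\mathbf{1}_S^{\top}C\,\mathbf{1}_T|\le\|C\|\sqrt{|S||T|}$ already closes the argument. Second, for $X_{\widetilde h,q}$ the zero-set correction is $O_d(|S|+|T|)$ rather than $O_d(\min(|S|,|T|))$, because each of the at most $d$ roots of $g$ is adjacent to every vertex; since $|S|+|T|=O(\sqrt{q}\sqrt{|S||T|})$ for nonempty $S,T$, this is still absorbed into the $QR(1/2)$ error, but your claim that ``each $a$ contributes at most $d$ such $b$'' needs this adjustment in the twisted case.
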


In particular, Paley graphs $P_q=X_{x-y,q}$, Paley sum graphs $X_{x+y,q}$, and Diophantine graphs $D_q=X_{xy+1,q}$ (see \cref{def: dio graph}) form three families of quasi-random graphs with property $QR(1/2)$. 

\bigskip

Finally, we discuss lower bounds on clique number and independence number of the graphs $X_{f,q}$ by combining their quasi-randomness and recent breakthroughs in Ramsey theory.

For a general graph $X$ with order $n$, the classical 
Erd\H{o}s-Szekeres theorem \cite{ES35} implies that $\max\{\omega(X),\alpha(X)\}\geq (1-o(1))\log_4 n$. Using the recent breakthrough on diagonal Ramsey number by Campos, Griffiths, Morris, and Sahasrabudhe \cite{CGMS23}, one can deduce a better bound that $(1-o(1))\log_{3.993} n$. Recently, Gupta, Ndiaye, Norin, and Wei \cite{GNNW24} further optimized the proof techniques in \cite{CGMS23} and proved that $\max\{\omega(X),\alpha(X)\}\geq (1-o(1))\log_{3.8} n$.

It is known that for a quasi-random graph, one can use its quasi-random property and off-diagonal Ramsey bounds to get an improved lower bound on $\max\{\omega(X), \alpha(X)\}$; see for example Thomason \cite[Section 6]{T87a} and \cite[Section 4.2]{Thomason99}, as well as unpublished manuscripts \footnote{Private communication with J\'ozsef Solymosi.} of Alon \cite{Am}, Conlon \cite{Cm}, and Solymosi \cite{Sm}. In particular, from \cite{Sm, T87a}\footnote{Private communication with J\'ozsef Solymosi and Andrew Thomason.}, the best-known lower bound on the clique number of Paley graph $P_q$ is $\omega(P_q)\geq (1-o(1))\log_{3.008} q$, while the trivial lower bound is $(1-o(1))\log_4 q$ since Paley graphs are self-complementary. We refer to \cref{sec:Paley} for other relevant results on the clique number of Paley graphs.

In our case, since we have a quantitative measurement of the quasi-randomness of graphs $X_{f,q}$, we can follow the same idea to obtain lower bounds on $\max\{\omega(X_{f,q}),\alpha(X_{f,q})\}$. Moreover, we can take advantage of the best-known upper bound on Ramsey numbers, due to Gupta, Ndiaye, Norin, and Wei \cite{GNNW24} (building on \cite{CGMS23}). In particular, we show \cref{thm: 3/4 intro} and \cref{thm: construction intro} imply the following two corollaries. 

\begin{corollary}\label{cor: xfq}
Let $d$ be a positive integer. As $q \to \infty$, 
$$
\frac{\omega(X_{f,q})+\alpha(X_{f,q})}{2} \geq (1-o(1))\log_{3.501}q
$$
holds for all graphs $X_{f,q} \in \mathcal{X}_d$, and 
$$
\frac{\omega(X_{f,q})+\alpha(X_{f,q})}{2} \geq (1-o(1))\log_{2.936}q.
$$
holds for all graphs $X_{f,q} \in \mathcal{H}_d$. 
\end{corollary}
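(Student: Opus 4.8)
The plan is to deduce both bounds from a single mechanism: the $QR(\theta)$ property lets us build a long homogeneous ``head'' greedily, after which the leftover vertices are fed into the best available off-diagonal Ramsey bound. Fix $G=X_{f,q}\in\mathcal{X}_d$ (so $\theta=3/4$ by \cref{thm: 3/4 intro}) or $G=X_{f,q}\in\mathcal{H}_d$ (so $\theta=1/2$ by \cref{thm: construction intro}), write $n=q$, and set $\omega=\omega(X_{f,q})$, $\alpha=\alpha(X_{f,q})$. First I would convert the mixing estimate \eqref{eq: beta-quasi}, via \cref{prop:expander mixing} and \cref{cor: expander2}, into a halving statement: there is a threshold $m_0=n^{\theta+o(1)}$ such that whenever $|S|\ge m_0$, all but $o(|S|)$ vertices $v\in S$ satisfy $|N(v)\cap S|=(\tfrac12+o(1))|S|$ and $|S\setminus(N(v)\cup\{v\})|=(\tfrac12+o(1))|S|$. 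Iterating, I can restrict to the neighborhood (resp. non-neighborhood) of a well-chosen vertex, roughly halving the current set each time, for as long as it stays above $m_0$; this produces a clique (resp. independent set) of size up to $t_1=(1-\theta-o(1))\log_2 n$ together with a residual set $W$, $|W|\ge m_0$, completely joined (resp. anti-joined) to it.

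The second step combines this head with Ramsey's theorem. If we stop the greedy clique-growth after $a\le t_1$ steps, the residual $W$ contains no clique of size $\omega-a+1$ and no independent set of size $\alpha+1$, so $|W|<R(\omega-a+1,\alpha+1)$; since $|W|\ge n^{-o(1)}\,n/2^{a}$, this yields
\[
n\le 2^{a}\,R(\omega-a+1,\alpha+1)\cdot n^{o(1)},\qquad 0\le a\le t_1,
\]
together with the mirror inequality $n\le 2^{b}\,R(\omega+1,\alpha-b+1)\,n^{o(1)}$ for $0\le b\le t_1$, obtained by growing an independent set first. Taking logarithms and inserting the best known upper bounds on the off-diagonal Ramsey numbers $R(s,t)$, due to Gupta, Ndiaye, Norin, and Wei \cite{GNNW24} (building on Campos, Griffiths, Morris, and Sahasrabudhe \cite{CGMS23}), which lower the exponential rate of $R(s,t)$ below the one coming from $\binom{s+t}{s}$, turns these into explicit upper bounds for $\log_2 n$ in terms of $\omega,\alpha,a$, and $\theta$. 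Optimizing the free parameter $a\in[0,t_1]$ — equivalently, choosing where on the off-diagonal Ramsey curve to land — then gives $\tfrac12(\omega+\alpha)\ge(1-o(1))\log_b n$, where $b=b(\theta)$ is the base produced by the optimization; carrying out the numerics with $\theta=3/4$ and $\theta=1/2$ yields $b=3.501$ and $b=2.936$, respectively. This is the Thomason--Solymosi strategy behind the Paley clique bound $\log_{3.008}q$ of \cite{Sm, T87a}, now with the sharper Ramsey input and with the parameter $\theta$ made explicit through $QR(\theta)$.

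The main obstacle is the first step: making the halving quantitatively robust over $\Theta(\log n)$ iterations. Each restriction to a neighborhood incurs a multiplicative error $1+o(1)$ coming from \eqref{eq: beta-quasi}, and I must ensure that these errors, compounded $t_1$ times, remain $n^{o(1)}$, and that the ``bad'' vertices whose degree into $S$ deviates from $\tfrac12|S|$ never obstruct the greedy choice while $|S|\ge m_0$; this is exactly where the exponent $\theta$ enters and pins down the threshold $m_0=n^{\theta+o(1)}$, so that the relative mixing error stays negligible throughout. A secondary point is the bookkeeping in the second step: because the target is the \emph{average} $\tfrac12(\omega+\alpha)$ rather than $\max(\omega,\alpha)$, one must retain the clique already built in the head even in the branch where Ramsey returns an independent set, and symmetrize over the two inequalities above, instead of reading off a one-sided clique bound as in the self-complementary Paley case.
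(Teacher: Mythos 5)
Your proposal is correct and is essentially the paper's own approach: the paper deduces this corollary by combining \cref{thm: 3/4 intro} and \cref{thm: construction intro} with the general bound of \cref{thm: 1.1}, and your argument---greedily growing a clique ``head'' of length $(1-\theta-o(1))\log_2 q$ inside successive common neighborhoods and then feeding the residual set of size $q^{\theta-o(1)}$ into the Gupta--Ndiaye--Norin--Wei Ramsey bound---is precisely how \cref{thm: 1.1} is proved in the paper (via \cref{lem:Ramsey} and the optimization encoded in \cref{eq:ell}). The only cosmetic differences are that you inline that proposition rather than cite it and optimize over the stopping time $a$, whereas the paper fixes the full head length and packages the optimization into the equation defining $\ell(\theta)$; the mechanism and the numerics ($3.501$ for $\theta=3/4$, $2.936$ for $\theta=1/2$) coincide.
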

When $f$ is a homogeneous admissible polynomial with an odd degree, we strengthen \cref{cor: xfq} as follows. 

\begin{corollary}\label{thm: lower bound of homo}
Let $d$ be an odd positive integer. Let $f\in \F_q[x,y]$ be a homogeneous admissible polynomial with degree $d$. Then $\omega(X_{f,q}) \geq (1-o(1))\log_{3.501} q$ as $q \to \infty$; moreover, if $X_{f,q} \in \mathcal{H}_d$, then $\omega(X_{f,q}) \geq (1-o(1))\log_{2.936} q$. 
\end{corollary}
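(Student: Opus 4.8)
The plan is to deduce \cref{thm: lower bound of homo} from \cref{cor: xfq} by showing that, for a homogeneous admissible $f$ of odd degree, the graph $X_{f,q}$ is (essentially) self-complementary, so that $\omega(X_{f,q})$ dominates the average $\tfrac12(\omega(X_{f,q})+\alpha(X_{f,q}))$ rather than merely being bounded by it. Since a homogeneous admissible polynomial of degree $d$ lies in $\mathcal{X}_d$, \cref{cor: xfq} already supplies $\tfrac12(\omega(X_{f,q})+\alpha(X_{f,q}))\ge(1-o(1))\log_{3.501}q$, with the improved base $2.936$ when $X_{f,q}\in\mathcal{H}_d$. Thus the entire task is to replace the average by $\omega(X_{f,q})$, i.e.\ to prove $\omega(X_{f,q})\ge\alpha(X_{f,q})$.

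The key input is a scaling symmetry of the adjacency relation. Let $\chi$ be the quadratic character of $\F_q$ (with $\chi(0)=0$), fix a non-square $\lambda\in\F_q^*$, and consider the vertex bijection $\sigma_\lambda\colon x\mapsto\lambda x$. Homogeneity of degree $d$ gives $f(\lambda a,\lambda b)=\lambda^{d}f(a,b)$, and since $d$ is odd and $\lambda$ is a non-square, $\lambda^{d}$ is a non-square, so
\[
\chi\bigl(f(\sigma_\lambda a,\sigma_\lambda b)\bigr)=\chi(\lambda^{d})\,\chi\bigl(f(a,b)\bigr)=-\,\chi\bigl(f(a,b)\bigr)\qquad(a,b\in\F_q).
\]
Hence $\sigma_\lambda$ negates the character of $f$ on every pair. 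An independent set of $X_{f,q}$ is a set on which $f$ is never a square; treating $0$ as a square (equivalently, counting the pairs with $f(a,b)=0$ as edges), every non-adjacent pair satisfies $\chi(f)=-1$, so $\sigma_\lambda$ carries it to a pair with $\chi(f)=+1$, i.e.\ an edge. Therefore $\sigma_\lambda$ maps any independent set to a clique of the same size, giving $\omega(X_{f,q})\ge\alpha(X_{f,q})$ and hence $\omega(X_{f,q})\ge(1-o(1))\log_{3.501}q$, and $\omega(X_{f,q})\ge(1-o(1))\log_{2.936}q$ when $X_{f,q}\in\mathcal{H}_d$.

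The point requiring care, and the step I expect to be the main obstacle, is the behavior of $\sigma_\lambda$ on the exceptional set $Z=\{(a,b): a\neq b,\ f(a,b)=0\}$, where $\chi(f)=0$ is fixed rather than negated. The clean argument above works because, under the convention that $0$ is a square, independent sets avoid $Z$ automatically; under the opposite convention the roles reverse and one obtains only $\alpha(X_{f,q})\ge\omega(X_{f,q})$, so one must instead bound the gap $\alpha(X_{f,q})-\omega(X_{f,q})$ produced by $Z$. I would control $Z$ structurally: writing $f=F(x)G(y)H(x,y)$ with primitive kernel $H$, homogeneity forces $H$ to split into linear forms, so $f(a,b)=0$ with $a,b$ nonzero forces the ratio $a/b$ into the fixed set of at most $d$ rational roots of $H(t,1)$; thus $Z$ is a union of at most $d$ multiplicative-ratio relations in $\F_q^*$, a graph of bounded degree $O(d)$. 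The hard part is quantitative: a crude bounded-degree estimate only yields a constant-factor comparison between $\omega$ and $\alpha$, which would spoil the logarithmic base, so one needs the finer structure of $Z$—for instance, that a near-extremal clique or independent set (of size $\Theta(\log q)$) can be chosen to meet $Z$ in only $o(\log q)$ pairs—to preserve the exact bases $3.501$ and $2.936$.
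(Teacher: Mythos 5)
Your proposal is correct and is essentially the paper's own proof: the paper likewise multiplies a maximum independent set by a non-square $r$, using $f(rx,ry)=r^{d}f(x,y)$ with $d$ odd to turn it into a clique of the same size, so $\omega(X_{f,q})\ge\alpha(X_{f,q})$, and then invokes \cref{cor: xfq}. The concern in your final paragraph is moot: the paper's convention throughout is that $0$ counts as a square (see, e.g., the proofs of \cref{cor:ub} and \cref{prop:A^2}), so independent sets automatically avoid the zero set of $f$ and no analysis of that exceptional set is needed.
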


In particular, for Paley graphs, the above corollary implies that $\omega(P_q)\geq (1-o(1))\log_{2.936}q$.

\medskip

\textbf{Notation.} We follow standard notation in graph theory. Let us denote by $\omega(X)$ the clique number of a graph $X$, and $\alpha(X)$ the independence number of a graph $X$. We always assume $G$ is a graph with $n$ vertices. For a graph $G$, we use $\lambda_1(G),\lambda_2(G),\ldots, \lambda_n(G)$ to denote its eigenvalues such that $|\lambda_1(G)| \ge |\lambda_2(G)| \ge \cdots \ge |\lambda_n(G)|$. 
If there is no confusion, we denote $\lambda_i(G)$ by $\lambda_i$ for each $1\le i \le n$. For any vertex $v$ of $G$, we let $N(v)$ denote the neighborhood of $v$. We also follow standard asymptotic notations. 

\textbf{Organization of the paper.}
In \cref{sec: prelim1}, we provide more background about quasi-random graphs, Paley graphs, and Diophantine tuples. 
In \cref{sec: prelim}, we give some preliminary results for our proofs. 
In \cref{sec: quasirandom}, we introduce an expander mixing lemma for quasi-random graphs and prove a general proposition on clique number and independence number of quasi-random graphs in \cref{thm: 1.1}. 
In \cref{sec: polynomial}, we combine the tools to prove our new results: \cref{thm: 3/4 intro}, \cref{thm: construction intro}, \cref{cor: xfq}, and \cref{thm: lower bound of homo}.

\section{More backgrounds and motivations }\label{sec: prelim1}

\subsection{Known explicit constructions of quasi-random graphs}\label{subsec: constructions}
Classical examples of quasi-random graphs include cyclotomic graphs and cyclotomic sum graphs \cite{C91, CGW89, kp04,X12}. Cyclotomic sum graphs are Cayley graphs over the additive group of a finite field, with the connection set being the union of certain cosets of a fixed multiplicative subgroup. Cyclotomic sum graphs are similarly defined as Cayley sum graphs. In particular, these include Paley and Paley sum graphs. Another large family of quasi-random graphs is conference graphs (strongly regular graphs with specific parameters) \cite{BV22}, and many conference graphs come from finite geometry and design theory. In addition, Borb\'{e}ly and S\'{a}rk\"{o}zy \cite{BS19} constructed quasi-random graphs based on circulant matrices arising from pseudo-random binary sequences, and they also recovered Paley graphs as a special case \cite[Theorem 5.1]{BS19}. We refer to \cite{C91, CGW89, T87b, T16} for more explicit constructions of quasi-random graphs. It is interesting to note that the most explicit constructions of quasi-random graphs are either Cayley graphs or Cayley sum graphs, including the aforementioned explicit constructions. In particular, Cayley graphs and Cayley sum graphs are regular, and their eigenvalues can be explicitly computed via character sums; thus, one can use the expander mixing lemma to verify their quasi-randomness readily. 

\subsection{Clique number of Paley graphs}\label{sec:Paley}
Estimating the clique number of Paley graphs is an important open problem, due to the rich connections between Paley graphs and other problems in additive combinatorics, analytic number theory, and algebraic graph theory \cite{CL07}. We refer to \cite{Y25a} for a discussion on the state of the art of the bounds on the clique number of Paley graphs. The trivial upper bound on $\omega(P_q)$ is $\sqrt{q}$ \cite[Lemma 1.2]{Yip1}, and it is tight when $q$ is a square. We assume that $q$ is a non-square in the following discussion. 

When $q$ is a prime, it is widely believed that $\omega(P_q)=O_{\epsilon} (q^{\epsilon})$ for each $\epsilon>0$ (in fact, this is a special case of the more general Paley graph conjecture). However, until very recently, the trivial upper bound $\sqrt{q}$ on $\omega(P_q)$ has been improved only to $\sqrt{q}-1$. Recently, Hanson and Petridis \cite{HP} and Yip \cite{Yip1} improved the $\sqrt{q}$ bound to $(1+o(1))\sqrt{q/2}$ using Stepanov's method.

As for the lower bound, Graham and Ringrose \cite{GR} showed that there is a constant $C>0$, such that $\omega(P_q)\geq C\log q \log \log \log q$ holds for infinitely many primes $q \equiv 1 \pmod 4$. Moreover, under the generalized Riemann hypothesis (GRH), Montgomery \cite{HLM} further improved the $C\log q \log \log \log q$ bound to $C\log q \log \log q$.  

\subsection{More on Diophantine tuples and Diophantine graphs}\label{subsec: Dio pules and graphs}

Diophantine $m$-tuples and Paley graphs are important and well-studied. They were treated completely independently until recently G\"{u}lo\u{g}lu and Murty pointed out the connection between the Paley graph conjecture and the size of Diophantine $m$-tuples in \cite{GM20}. 
This interesting connection has been further explored in \cite{DKM22, KYY, KYY24a, Y25}. 
Inspired by such a connection, we define a new graph associated with Diophantine $m$-tuples over $\mathbb{F}_{q}$, which encodes their properties. 

\begin{definition}\label{def: dio graph}
Let $q$ be an odd prime power. The \textit{Diophantine graph} $D_{q}$ is the graph whose vertex set is $\mathbb{F}_{q}$, and two vertices $x$ and $y$ are adjacent if and only if $xy+1$ is a square in $\mathbb{F}_{q}$. 
\end{definition}
Based on the definition of Diophantine $m$-tuples over $\mathbb{F}_{q}$, we do not consider loops in $D_{q}$. Also, we note that a Diophantine tuple over $\F_q$ is exactly a clique in $D_q$.
Perhaps the most important problem in the study of Diophantine tuples over $\F_q$ is to determine the size of the largest Diophantine tuple over $\F_q$, equivalently, to find the clique number $\omega(D_q)$ of $D_q$. 
Although the above definition of Diophantine graphs appears to be new, tools from extremal graph theory and Ramsey theory have been frequently applied to study Diophantine tuples and their generalizations; see, for example, recent papers \cite{BHP25, Y25small}. In particular, Gyarmati \cite[Theorem 4]{G01} has already studied the symmetry of Diophantine graphs and proved $\omega(D_q)\geq (1-o(1))\log_{729} q$ using three-color Ramsey numbers.
We believe that obtaining a precise estimate on $\omega(D_q)$ is of the same difficulty as estimating $\omega(P_q)$, or perhaps even slightly harder.
 
Similar to the case of Paley graphs, one can verify that Diophantine graphs form a family of quasi-random graphs with property $QR(1/2)$ using \cref{thm: construction intro}.
Then \cref{thm: 1.1} implies the following corollary.

\begin{corollary}\label{cor: max dio}
When $q \to \infty$, we have
$$
\max \{\omega(D_q),\alpha(D_q)\} \geq (1-o(1))\log_{2.936}q.
$$
\end{corollary}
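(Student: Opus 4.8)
The plan is to recognize the Diophantine graph $D_q$ as a member of the family $\mathcal{H}_1$ and then feed the resulting quantitative quasi-randomness into the general lower bound of \cref{thm: 1.1}. First I would set $f(x,y)=xy+1$ and $g(x)=x$. The polynomial $f$ is symmetric, so $f(u,v)$ is a square in $\F_q$ exactly when $f(v,u)$ is, and hence $D_q=X_{f,q}$ is undirected. Since $xy+1$ is irreducible over $\F_q$, it has no nontrivial factor depending on $x$ alone or on $y$ alone; thus in the decomposition $f=F(x)G(y)H(x,y)$ we have $F=G=1$, and the primitive kernel is $H=xy+1$ itself, which is squarefree and in particular not a constant multiple of a square of a polynomial. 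Therefore $f$ is admissible with degree $1$ in each of $x$ and $y$. Taking $g(x)=x$ of degree $d=1$ gives $h(x,y)=f(g(x),g(y))=xy+1$, so $D_q=X_{h,q}\in\mathcal{H}_1$.

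Next I would invoke \cref{thm: construction intro} with $d=1$, which guarantees that $\mathcal{H}_1$ is a family of quasi-random graphs with property $QR(1/2)$. Concretely, this supplies a constant $c>0$ for which the bi-jumbledness bound \eqref{eq: beta-quasi} holds for $D_q$ with $\theta=1/2$. With this effective quasi-randomness in hand, I would apply the general proposition on clique and independence numbers in \cref{thm: 1.1} in the regime $\theta=1/2$, which is precisely the case recorded for $\mathcal{H}_d$ in \cref{cor: xfq}; it yields
\[
\frac{\omega(D_q)+\alpha(D_q)}{2}\geq (1-o(1))\log_{2.936}q.
\]
The corollary then follows from the elementary inequality $\max\{\omega(D_q),\alpha(D_q)\}\geq \tfrac12\bigl(\omega(D_q)+\alpha(D_q)\bigr)$.

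The only real content here is the verification that $f(x,y)=xy+1$ is admissible and fits the structural template of \cref{thm: construction intro}; once membership in $\mathcal{H}_1$ is established, the estimate is an immediate specialization of results already proved. The mild subtlety to watch is that the off-diagonal Ramsey input underlying \cref{thm: 1.1} is what pins down the base $2.936$, so I would make sure the constant produced by the $QR(1/2)$ property is tracked correctly through that argument, rather than through the weaker $QR(3/4)$ bound, which would only yield the base $3.501$ appearing in \cref{thm: 3/4 intro}.
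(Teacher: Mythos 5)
Your proposal is correct and follows essentially the same route as the paper: it identifies $D_q=X_{xy+1,q}$ as a member of $\mathcal{H}_1$ (verifying that $xy+1$ is admissible of bidegree $(1,1)$, with $g(x)=x$), invokes \cref{thm: construction intro} to get property $QR(1/2)$, and then applies \cref{thm: 1.1} (equivalently \cref{cor: xfq}) together with $\max\{\omega,\alpha\}\geq \tfrac12(\omega+\alpha)$. The only difference is that you spell out the admissibility check that the paper leaves implicit, which is a welcome addition rather than a deviation.
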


One may ask if we have $\omega(D_q) \ge (1-o(1))\log_{2.936}q$. 
This would improve the lower bound of the largest size of Diophantine tuples over $\F_q$.
Dujella and Kazalicki \cite[Theorem 17]{21DK} showed that the lower bound of the largest size of Diophantine tuples is $(1-o(1))\log_4 q$. 
Very recently, Kim, Yip, and Yoo \cite[Corollary 3.2, Theorem 3.5]{KYY24a} improved their bound to $(\frac{p}{p-1}-o(1))\log_4 q$ as $q=p^m \to \infty$, for a fixed $p$ such that $q \not \equiv 3 \pmod 8$. This is the best-known lower bound.

Using \cref{cor: xfq}, we can also obtain an lower bound on the average clique number of $X_{f,q}$ when $f=axy+b$ with $a,b \in \F_q^*$.
When $a=1,b=1$, this yields the case of the Diophantine graph $D_q$. Moreover, when $a=1$ and $b$ is nonzero, this corresponds to generalized Diophantine tuples over $\F_q$.
\begin{corollary}\label{corollary: average clique}
Let $q$ be an odd prime power. If we consider the family of graphs $X_{f,q}$ arising from the collection of polynomials $f(x,y)=axy+b$ with $a,b \in \F_q^*$, the average clique number of these graphs is at least $(1-o(1))\log_{2.936} q$ as $q \to \infty$.
\end{corollary}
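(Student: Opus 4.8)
The plan is to show that, after averaging over $(a,b)\in\F_q^*\times\F_q^*$, the mean clique number dominates the mean of $\tfrac12(\omega+\alpha)$, which \cref{cor: xfq} already bounds from below. First I would record that every graph $X_{axy+b,q}$ with $a,b\in\F_q^*$ belongs to $\mathcal{H}_1$: taking $g(x)=x$ and $f_0(x,y)=axy+b$, the polynomial $f_0$ is symmetric, has degree $1$ in each of $x$ and $y$, and is admissible, since $b\neq 0$ rules out any univariate factor (so its primitive kernel is $axy+b$ itself) and the irreducible polynomial $axy+b$ is not a constant multiple of a square. Hence $X_{axy+b,q}=X_{f_0,q}\in\mathcal{H}_1$, and \cref{cor: xfq} yields
\[
\frac{\omega(X_{axy+b,q})+\alpha(X_{axy+b,q})}{2}\ \geq\ (1-o(1))\log_{2.936}q
\]
uniformly in $a,b$.

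Next I would exploit a complementation symmetry internal to the family. Fix a nonsquare $\nu\in\F_q^*$ and adopt throughout the convention that two distinct vertices are adjacent exactly when $f(x,y)$ is a square, counting $0$ as a square; such conventions do not affect quasi-randomness (as noted in the introduction), so \cref{cor: xfq} applies verbatim. Writing $f_{a,b}(x,y)=axy+b$, the identity $\nu f_{a,b}=f_{\nu a,\nu b}$ shows that a pair is a nonzero nonsquare for $f_{a,b}$ if and only if it is a nonzero square for $\nu f_{a,b}$. Consequently the complement satisfies the edge containment $\overline{X_{f_{a,b},q}}\subseteq X_{f_{\nu a,\nu b},q}$, the only discrepancy being the zero set $Z_{a,b}=\{(x,y):axy+b=0\}$, which belongs to the larger graph. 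Since removing edges cannot increase the clique number, this gives
\[
\alpha(X_{f_{a,b},q})=\omega\bigl(\overline{X_{f_{a,b},q}}\bigr)\ \leq\ \omega(X_{f_{\nu a,\nu b},q}).
\]

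Summing this inequality over all $(a,b)\in\F_q^*\times\F_q^*$ and using that $(a,b)\mapsto(\nu a,\nu b)$ is a bijection of $\F_q^*\times\F_q^*$, the right-hand sum reindexes to $\sum_{a,b}\omega(X_{f_{a,b},q})$, so the average independence number is at most the average clique number. Combining this with the per-graph bound from \cref{cor: xfq}, the average clique number satisfies
\[
\frac{1}{(q-1)^2}\sum_{a,b}\omega(X_{f_{a,b},q})\ \geq\ \frac{1}{(q-1)^2}\sum_{a,b}\frac{\omega(X_{f_{a,b},q})+\alpha(X_{f_{a,b},q})}{2}\ \geq\ (1-o(1))\log_{2.936}q,
\]
which is the claimed bound.

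I expect the main subtlety to be the treatment of the zero set $Z_{a,b}$ together with the square/zero convention: one must verify that the complement genuinely embeds into another member of the family (rather than merely agreeing with it off a matching), and that the chosen convention orients the containment $\overline{X_{f_{a,b},q}}\subseteq X_{f_{\nu a,\nu b},q}$ in the direction favorable to a lower bound, since the opposite convention would cost us the harmless zero pairs in the wrong direction. Everything else is a direct reindexing argument layered on top of \cref{cor: xfq}.
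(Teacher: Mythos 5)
Your proof is correct and follows essentially the same route as the paper: the key step --- scaling by a fixed nonsquare $\nu$ so that $\alpha(X_{f_{a,b},q}) \le \omega(X_{f_{\nu a,\nu b},q})$, combined with the bijection $(a,b)\mapsto(\nu a,\nu b)$ on the family of polynomials and the per-graph bound from \cref{cor: xfq} --- is exactly the argument given there. Your additional care with the zero set merely makes explicit the containment $\overline{X_{f_{a,b},q}}\subseteq X_{f_{\nu a,\nu b},q}$ that the paper states as ``an independent set in $X_{f,q}$ is a clique in $X_{rf,q}$,'' so it changes nothing of substance.
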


\begin{proof}
Let $r$ be a fixed non-square in $\F_q$. We have $\omega(X_{rf, q})\geq \alpha(X_{f,q})$ since an independent set in $X_{f,q}$ is a clique in $X_{rf, q}$. Also note that the map $f \mapsto rf$ is a bijection on $\{axy+b: a,b \in \F_q^*\}$. Thus, by \cref{thm: construction intro} and \cref{cor: xfq}, 
\begin{align*}
2\sum_{a,b \in \F_q^*} \omega(X_{f,q})
&=\sum_{a,b \in \F_q^*} \omega(X_{f,q})+\sum_{a,b \in \F_q^*} \omega(X_{rf,q})=\sum_{a,b \in \F_q^*} \big(\omega(X_{f,q})+\omega(X_{rf,q})\big) \\
&\geq \sum_{a,b \in \F_q^*} \big(\omega(X_{f,q})+\alpha(X_{f,q})\big) \geq 2 \sum_{a,b \in \F_q^*} (1-o(1)) \log_{2.936}q.
\end{align*}
It follows that the average clique number is at least $(1-o(1))\log_{2.936} q$.
\end{proof}

Furthermore, it is possible to explore more general cases.
The definition of $f$-Diophantine sets was formally introduced by B\'{e}rczes, Dujella, Hajdu, Tengely \cite{BDHT16} for a polynomial $f \in \Z[x,y]$. Given a polynomial $f(x,y) \in \Z[x,y]$, a set $A\subset\mathbb{Z}$ is an \emph{$f$-Diophantine set} if $f(x,y)$ is a perfect square for all $x,y \in A$ with $x \neq y$. $f$-Diophantine sets are related to many famous problems in number theory \cite{BDHT16}. In particular, the case $f(x,y)=xy+1$ corresponds to the well-studied Diophantine tuples. Motivated by their definition, the exact analogue of $f$-Diophantine sets over finite fields was introduced in \cite{YY25}. If $q$ is an odd prime power and $f=f(x,y) \in \F_q[x,y]$ is a polynomial, we say $A\subset \mathbb{F}_q$ is an \emph{$f$-Diophantine set over $\F_q$} if $f(x,y)$ is a square in $\F_q$ for all $x,y \in A$ with $x \neq y$. Note that the graph $X_{f,q}$ is naturally induced by $f$-Diophantine sets over $\F_q$. Similar to the study of classical Diophantine tuples, it is of special interest to give bounds on the maximum size of $f$-Diophantine sets over $\F_q$, or equivalently, using the graph theory language, estimate the clique number of $X_{f,q}$.

The quasi-randomness of Diophantine graphs has many applications in the study of Diophantine tuples over finite fields, and so do their generalizations. As an illustration, if $m$ is fixed, and $q \to \infty$, Dujella and Kazalicki \cite{21DK} showed that the number of Diophantine $m$-tuples over $\F_q$ is $q^m/(2^{\binom{m}{2}}m!)+o(q^m)$. Since Diophantine graphs are quasi-random, this also follows from the property $P_1(m)$ from \cite{CGW89}. In view of \cref{thm: 3/4 intro}, if $d,m$ are fixed and $q \to \infty$, we have the same asymptotic for the number of $f$-Diophantine sets over $\F_q$ with size $m$, provided $X_{f,q} \in \mathcal{X}_d$. On the other hand, the techniques used in \cite{21DK} do not seem to extend to this general setting.

\section{Preliminaries}\label{sec: prelim}

Throughout the section, let $f \in \F_q[x,y]$ be a polynomial with degree $d \geq 1$. Recall that Gyarmati and S\'{a}rk\"{o}zy \cite[Proposition 1]{GS08} showed that one can always write $f(x,y)=F(x)G(y)H(x,y)$, where $H(x,y)$ is primitive in both $x$ and $y$. Following their notations, we call $H(x,y)$ the primitive kernel of $f(x,y)$, and we note that $H(x,y)$ is unique up to a constant factor. We prove some properties of admissible polynomials and discuss their implications for graphs $X_{f,q}$. These preliminary results will be useful for the proof of our main results.

\subsection{A necessary condition for  $X_{f,q}$ to be Paley-like}
In this subsection, we motivate the assumptions in the definitions of admissible polynomials: we show that if $X_{f,q}$ behaves like a Paley graph, then $f$ is necessarily admissible. More precisely, if the primitive kernel $H(x,y)$ is a constant multiple of a square of a polynomial, then the graph $X_{f,q}$ has a simple structure. 

\begin{lemma}\label{lem:NIZ}
For each $a \in \overline{\F_q}$, $H(a,y)$ is not identically zero.
\end{lemma}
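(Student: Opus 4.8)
The plan is to argue by contradiction, exploiting the defining feature of the primitive kernel: $H(x,y)$ has no nonconstant factor depending on $x$ alone, since any such factor would be absorbed into $F(x)$ in the decomposition $f(x,y)=F(x)G(y)H(x,y)$. Concretely, viewing $H$ as a polynomial in $y$ with coefficients in $\F_q[x]$, the content $\operatorname{cont}_y(H)$, i.e. the gcd of those coefficients, must be a nonzero constant; otherwise a nonconstant common factor in $x$ could be extracted and merged into $F$, contradicting that $H$ is the primitive kernel. (The hypothesis $d\geq 1$ only serves to guarantee $f\neq 0$, hence $H\neq 0$, so that this decomposition and its coefficients make sense.)

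So suppose toward a contradiction that $H(a,y)\equiv 0$ for some $a\in\overline{\F_q}$. Write $H(x,y)=\sum_{j=0}^{n}c_j(x)\,y^j$ with each $c_j(x)\in\F_q[x]$. Substituting $x=a$ gives $H(a,y)=\sum_j c_j(a)\,y^j$, and since this is the zero polynomial in $y$, every coefficient must vanish: $c_j(a)=0$ for all $j$. Thus $a$ is a common root of all the $c_j$.

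Next I would pass to the minimal polynomial $m(x)\in\F_q[x]$ of $a$ over $\F_q$, which is nonconstant. Since each $c_j\in\F_q[x]$ has $a$ as a root, $m(x)$ divides every $c_j(x)$, and therefore $m(x)\mid\gcd_j c_j(x)=\operatorname{cont}_y(H)$. But $\operatorname{cont}_y(H)$ is a nonzero constant by primitivity, so a nonconstant polynomial cannot divide it, a contradiction. Hence no such $a$ exists.

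I expect the only real subtlety to be the careful bookkeeping of what the primitive kernel guarantees: one must pin down that primitivity in the $x$-direction is exactly the statement that the $\F_q[x]$-coefficients of $H$, viewed as a polynomial in $y$, share no nonconstant common factor. Equivalently, and perhaps more cleanly, one can observe that $H(a,y)\equiv 0$ means $(x-a)\mid H$ in $\overline{\F_q}[x,y]$ by the factor theorem applied over the integral domain $\overline{\F_q}[y]$; since $H$ has coefficients in $\F_q$, taking the product over the Galois conjugates of $a$ yields $m(x)\mid H$, again exhibiting a nonconstant pure-$x$ factor and producing the same contradiction. Everything else is routine.
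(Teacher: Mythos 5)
Your proof is correct and is essentially the paper's own argument: both proceed by contradiction, deduce that the minimal polynomial of $a$ over $\F_q$ divides $H(x,y)$ (giving a nonconstant factor in $x$ alone), and contradict the primitivity of the kernel $H$. Your main write-up phrases this through the common vanishing of the $\F_q[x]$-coefficients and the content $\operatorname{cont}_y(H)$, while the paper uses the factor $x-a$ directly — a variant you yourself note at the end — so the two are the same proof in slightly different clothing.
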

\begin{proof}
Suppose the polynomial $H(a, y)$ is identically zero. Then $H(x,y)=H(x,y)-H(a,y)$ has the factor $x-a$. Let $k(x)$ be the minimal polynomial of $a$ over $\F_q$; then $k(x)$ divides $H(x,y)$,contradicting the definition of $H$.    
\end{proof}

\begin{lemma}\label{lem:natural}
Let $f \in \F_q[x,y]$ be of degree $d$, such that $X_{f,q}$ is an undirected graph. Let $H$ be the primitive kernel of $f(x,y)$. If $H(x,y)$ is a constant multiple of a square of a polynomial, then by removing at most $2d$ vertices and $dq$ edges from the graph $X_{f,q}$, it becomes either a complete graph, an empty graph, a complete bipartite graph, or the vertex-disjoint union of two complete subgraphs.  
\end{lemma}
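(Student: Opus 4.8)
Write $H(x,y) = c \cdot K(x,y)^2$ for some constant $c \in \F_q^*$ and polynomial $K \in \F_q[x,y]$, so that $f(x,y) = c \cdot F(x)G(y)K(x,y)^2$. The key observation is that whether $f(a,b)$ is a square in $\F_q$ is controlled, up to the square factor $K(a,b)^2$, only by the one-variable factors $F(x)$ and $G(y)$. The plan is to first discard the small set of ``bad'' vertices where the square factor degenerates or the one-variable factors vanish, and then show that adjacency on the remaining vertices depends only on a simple product condition.

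\smallskip

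\emph{Step 1: Remove the bad vertices.} Let $B = \{a \in \F_q : F(a) = 0 \text{ or } G(a) = 0\}$. Since $\deg F + \deg G \le d$, we have $|B| \le 2d$, using the convention that we drop at most $2d$ vertices. For any remaining vertex $a \notin B$, both $F(a)$ and $G(a)$ lie in $\F_q^*$. I would also note that the vertices where $K$ contributes a zero do not affect adjacency in a problematic way: if $K(a,b) = 0$ then $f(a,b) = 0$ is a square, so such pairs are automatically edges; but to keep the count of altered edges small, I would instead argue that removing the $\le 2d$ vertices in $B$ already suffices and handle $K$-zeros as part of the edge bound below.

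\smallskip

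\emph{Step 2: Reduce adjacency to a product condition.} For $a, b \notin B$ with $a \ne b$, whenever $K(a,b) \ne 0$ we have that $f(a,b)$ is a square if and only if $c \cdot F(a) G(b)$ is a square, since multiplying by the nonzero square $K(a,b)^2$ does not change quadratic character. Define $\chi$ to be the quadratic character of $\F_q$. Then adjacency of $a,b$ is governed by $\chi(c) \chi(F(a)) \chi(G(b))$. The undirectedness hypothesis forces this to be symmetric in $a,b$, which I expect to pin down a rigid relationship between $F$ and $G$ (for instance $\chi(F(t)) = \chi(G(t))$ up to the constant, or one of $F,G$ being constant-character). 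Writing $\sigma(a) := \chi(cF(a)G(a))$ or more naturally partitioning the surviving vertices by the value of $\chi(F(a))$ and $\chi(G(a))$, the adjacency becomes a fixed function of these two $\pm 1$ labels. This yields at most two classes of vertices, and whether two classes are joined is a constant depending only on $\chi(c)$ and the class labels --- exactly the data that produces a complete graph, empty graph, complete bipartite graph, or disjoint union of two cliques.

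\smallskip

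\emph{Step 3: Bound the altered edges and conclude.} The pairs $(a,b)$ with $a,b \notin B$ but $K(a,b) = 0$ are the only ones where the product condition in Step 2 can fail to describe the true adjacency. For each fixed $a \notin B$, \cref{lem:NIZ} guarantees $K(a,y)$ is not identically zero, so it has at most $\deg_y K \le d$ roots $b$; hence the number of such exceptional pairs is at most $dq$, matching the claimed edge bound. Removing (or re-adjusting) these at most $dq$ edges makes the surviving graph exactly the one determined by the two character-classes, which is one of the four stated types. The main obstacle I anticipate is Step 2: carefully extracting from the single scalar symmetry condition ``$f(a,b)$ square $\iff$ $f(b,a)$ square'' the precise structural constraint on $F$ and $G$, and checking that the four listed graph types genuinely exhaust all resulting cases (in particular correctly identifying when the two classes merge, giving a complete or empty graph, versus remain separate, giving the bipartite or disjoint-union cases).
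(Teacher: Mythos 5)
Your Steps 1 and 3 coincide with the paper's own proof: the same set of at most $2d$ vertices (zeros of $F$ and $G$) is removed, and the same count of at most $dq$ deleted edges is obtained from \cref{lem:NIZ} (the paper phrases it as passing to the subgraph where adjacency requires $f(u,v)$ to be a \emph{nonzero} square, which removes exactly the pairs you call $K$-zeros). The genuine gap is Step 2, which you yourself flag as the main obstacle: labelling each surviving vertex $a$ by the pair $(\chi(F(a)),\chi(G(a)))$ produces \emph{four} classes, not two, and your assertion ``this yields at most two classes of vertices'' is precisely the content that has to be proved --- it is not a formal consequence of writing adjacency as $\chi(c)\chi(F(a))\chi(G(b))$. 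Without it, nothing rules out, say, three nonempty classes, and then the resulting graph need not be one of the four listed types. As written, the proposal sets up the correct framework but does not establish the conclusion.

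The missing argument (which is the heart of the paper's proof) is a pairwise case analysis on the four sets $A_i\cap B_j$, where $A_1=\{u: F(u)\ \text{square}\}$, $B_1=\{u: G(u)\ \text{square}\}$, etc. First normalize: rescale $F$ by a constant so that $H$ is exactly a square of a polynomial (this absorbs your constant $c$ and removes $\chi(c)$ from all formulas). Then, if $u\in A_1\cap B_1$ and $v\in A_1\cap B_2$ were both present, one computes $f(u,v)=F(u)G(v)H(u,v)$ is a nonsquare while $f(v,u)=F(v)G(u)H(v,u)$ is a square, contradicting undirectedness; the same computation run over all ``mixed'' pairs of classes shows that the only coexisting configurations are the diagonal ones, $(A_1\cap B_1,\,A_2\cap B_2)$ or $(A_1\cap B_2,\,A_2\cap B_1)$. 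Reading off the sign of $\chi(F(u))\chi(G(v))$ within and across classes then gives, respectively, a disjoint union of two cliques or a complete bipartite graph (and a complete or empty graph when only one class is nonempty). So your approach is the paper's approach, but the lemma's actual content --- extracting from undirectedness that only diagonal pairs of classes survive, and the exhaustive identification of the four graph types --- is exactly the part left unproved.
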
 

\begin{proof}
We can write $f(x,y)=F(x)G(y)H(x,y)$. By scaling $F(x)$ properly, we may assume that $H(x,y)$ is a square of a polynomial. 

Let $E=\{u \in \F_q: F(u)=0\} \cup \{v \in \F_q: G(v)=0\}$; it is clear that $|E|\leq 2d$. Let $Y$ be the subgraph of $X_{f,q}$, with vertex set $\F_q \setminus E$, such that two distinct vertices $u,v$ are adjacent if and only if $f(u,v)$ is a non-zero square in $\F_q$. Note that for each $u \in \F_q \setminus E$, the polynomial $f(u,y)$ is not identically zero by \cref{lem:NIZ} and the assumption $F(u) \neq 0$, thus there are at most $d$ many $u \in \F_q$ such that $f(u,v)=0$. This shows that the graph $Y$ can be obtained from $X_{f,q}$ by removing at most $2d$ vertices and $dq$ edges.

Next, we show that $Y$ has the desired property. Let $A_1=\{u \in \F_q \setminus E: F(u) \text{ is a square in } \F_q\}$, and let $A_2=\F_q \setminus E \setminus A_1$. Similarly, define $B_1=\{v \in \F_q \setminus E: G(v) \text{ is a square in } \F_q\}$, and let $B_2=\F_q \setminus E \setminus B_1$. Consider the following 4 sets $A_1 \cap B_1$, $A_1 \cap B_2$, $A_2 \cap B_1$, $A_2 \cap B_2$, which form a partition of $\F_q \setminus E$. If both $A_1 \cap B_1$ and $A_1 \cap B_2$ are nonempty, then for $u \in A_1 \cap B_1$ and $v \in A_1 \cap B_2$, $f(u,v)=F(u)G(v)H(u,v)$ is a non-square, while $f(v,u)=F(v)G(u)H(v,u)$ is a square, violating the assumption that $f$ induces an undirected graph. Thus, either $A_1 \cap B_1$ or $A_1 \cap B_2$ is empty. Similarly, we can show that either $A_1 \cap B_1$ or $A_2 \cap B_1$ is empty, either $A_1 \cap B_2$ or $A_2 \cap B_2$ is empty, and either $A_2 \cap B_1$ or $A_2 \cap B_2$ is empty. Therefore, at most 2 of the sets of $A_1 \cap B_1$, $A_1 \cap B_2$, $A_2 \cap B_1$, $A_2 \cap B_2$ are nonempty. If there are 2 of the sets of $A_1 \cap B_1$, $A_1 \cap B_2$, $A_2 \cap B_1$, $A_2 \cap B_2$ are nonempty, then they must be either $A_1 \cap B_1$ and $A_2 \cap B_2$, or $A_1 \cap B_2$ and $A_2 \cap B_1$-- in both cases, the graph $Y$ is either the vertex-disjoint union of two complete subgraphs, or a complete bipartite graph. If only one set in $A_1 \cap B_1$, $A_1 \cap B_2$, $A_2 \cap B_1$, $A_2 \cap B_2$ is nonempty, then the graph $Y$ is either a complete graph or an empty graph.    
\end{proof}

\subsection{Basic properties of admissible polynomials}

In this subsection, we additionally assume that the primitive kernel $H$ is not a multiple of a square of a polynomial.

\begin{lemma}\label{lem:Hsquare}
The number of $u \in \F_q$ such that $H(x,u)$ is a constant multiple of a square of a polynomial is at most $d^2+d$.    
\end{lemma}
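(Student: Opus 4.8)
The plan is to reduce the statement to a generic specialization argument over $\overline{\F_q}$, and then to bound the exceptional locus by an intersection-theoretic (Bézout) count. I begin with reductions. Since $H$ is primitive in $x$, Gauss's lemma shows that $H$ is a constant multiple of a square in $\overline{\F_q}(y)[x]$ if and only if it is one in $\overline{\F_q}[x,y]$; so the admissibility hypothesis gives that $H$ is \emph{not} a constant times a square in $\overline{\F_q}[x,y]$. I factor $H = c\prod_{j} P_j^{e_j}$ into distinct irreducibles over $\overline{\F_q}$, so that at least one exponent, say $e_1$, is odd. Because $H$ has coefficients in $\F_q$ (hence a Frobenius-stable set of factors) and is primitive in $y$, no $P_j$ can be a polynomial in $y$ alone; thus $\deg_x P_j \ge 1$ for every $j$. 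Finally, since $q$ is odd, a polynomial in $\F_q[x]$ is a constant times a square exactly when every root multiplicity over $\overline{\F_q}$ is even, so I may test the ``constant times a square'' condition over $\overline{\F_q}$.

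For the generic argument, let $W = \prod_j P_j$ be the radical of $H$, a squarefree bivariate polynomial with $\deg W \le \deg H \le d$, and let $\ell(y)$ be its leading coefficient in $x$. I claim that if $u$ satisfies $\ell(u)\neq 0$ and $\operatorname{disc}_x(W)(u)\neq 0$, then $H(x,u)$ is not a constant times a square. Indeed, for such $u$ the specialization $W(x,u)$ is squarefree and degree-preserving in $x$, which forces the $P_j(x,u)$ to be squarefree, pairwise coprime, and each of positive degree; hence every root of $H(x,u)=c\prod_j P_j(x,u)^{e_j}$ has multiplicity equal to the corresponding $e_j$, and the odd exponent $e_1$ produces a root of odd multiplicity. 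Taking the contrapositive, every $u$ for which $H(x,u)$ is a constant times a square lies in the exceptional set $\{\ell=0\}\cup\{\operatorname{disc}_x W=0\}$.

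Now I bound that set, assuming first that $W$ is separable in $x$ (the general case is discussed below). The values with $\ell(u)=0$ number at most $\deg_y \ell \le \deg W \le d$. For the discriminant locus I would avoid estimating $\deg_y \operatorname{disc}_x W$ directly and instead argue geometrically: each such $u$ yields a repeated root $\alpha$, that is, a point $(\alpha,u)$ lying on both curves $W=0$ and $\partial_x W=0$. Since $W$ is primitive in $x$ (being a factor of the primitive $H$) and separable in $x$, these curves share no component, so by Bézout they meet in at most $(\deg W)(\deg W-1)\le d(d-1)$ points, and distinct $u$ give distinct points. This yields at most $d(d-1)+d=d^2$ exceptional values, comfortably within the claimed $d^2+d$.

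The step I expect to be the main obstacle is the separability of $W$ in $x$, which can fail in characteristic $p$: an irreducible factor may be inseparable in $x$, in which case $\partial_x W$ vanishes on a common component and the discriminant test collapses. The fix I would pursue is to pass to separable cores via Frobenius. If $P$ is purely inseparable in $x$, write $P(x,y)=\widetilde P(x^{p^{k}},y)$ with $\widetilde P$ separable, so that $P(x,u)=\ell_u\, M_u(x)^{p^{k}}$ for a separable $M_u$; since $p$ is odd, $p^{k}$ is odd and the parity of multiplicities is unchanged, so the same odd/even dichotomy survives while the relevant $x$-degrees only drop. Running the transversality and Bézout count on these separable cores, together with cross-resultants $\operatorname{Res}_x(P_i,P_j)$ to control coincidences among roots coming from different factors, should recover a bound of the same shape; retaining crude constants throughout is precisely what accounts for the slack between $d^2$ and the stated $d^2+d$.
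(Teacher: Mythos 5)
Your argument is sound and, where it is complete, actually achieves the stronger bound $d^2$; but it is a genuinely different route from the paper's, because the paper does not really prove this lemma at all: it observes that the statement is exactly the bound $|Y_1|\le d^2+d$ for the set $Y_1=\{u \in \F_q: H(x,u)=ch^2(x)\}$ established by Gyarmati and S\'{a}rk\"{o}zy in \cite[Lemma 4]{GS08}, and cites it. Your specialization-plus-B\'ezout argument --- generic $u$ preserves the $x$-degree and squarefreeness of the radical $W=\prod_j P_j$, so some root of $H(x,u)$ inherits the odd multiplicity $e_1$, and every exceptional $u$ lies below a zero of the leading coefficient $\ell$ or below an intersection point of the curves $W=0$ and $\partial_x W=0$ --- gives $d+d(d-1)=d^2\le d^2+d$ in the separable case, and it is self-contained. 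What the citation buys the paper is brevity and no characteristic-$p$ case analysis; what your proof buys is independence from \cite{GS08} and an explicit geometric description of the exceptional set.

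Two steps need repair before this is a complete proof. First, your opening reduction is misattributed: Gauss's lemma relates squares in $\overline{\F_q}(y)[x]$ to squares in $\overline{\F_q}[x,y]$, but what you actually need is descent of ``not a constant times a square'' from $\F_q[x,y]$ to $\overline{\F_q}[x,y]$, which is a statement about the base field, not about contents. The correct justification is that $\F_q$ is perfect: an $\F_q$-irreducible polynomial is squarefree over $\overline{\F_q}$ (its $\overline{\F_q}$-irreducible factors form a single Frobenius orbit, each appearing with multiplicity one), and distinct $\F_q$-irreducible factors remain coprime over $\overline{\F_q}$; hence the multiplicities in the $\overline{\F_q}$-factorization of $H$ coincide with those in its $\F_q$-factorization, and the ``constant times a square'' property transfers both ways. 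Without this, the existence of an odd exponent $e_1$ --- the engine of your whole argument --- is unjustified. Second, the inseparable case is only asserted to work (``should recover a bound of the same shape''), and it is the one place where the naive discriminant argument genuinely fails. Your sketch does close, and you should write it out: with $P_j(x,y)=\widetilde{P}_j(x^{p^{k_j}},y)$, $\widetilde{P}_j$ irreducible and separable in its first variable, any $u$ satisfying $\ell(u)\neq 0$, $\mathrm{disc}_X \widetilde{P}_j(u)\neq 0$ for all $j$, and $\mathrm{Res}_x(P_i,P_j)(u)\neq 0$ for all $i<j$ makes every root of $H(x,u)$ have multiplicity exactly $e_jp^{k_j}$ for a unique $j$, which is odd for $j=1$ since $p$ is odd; and the exceptional count is at most $d+\sum_j \deg \widetilde{P}_j\bigl(\deg \widetilde{P}_j-1\bigr)+\sum_{i<j}\deg P_i \deg P_j \le d+\deg W(\deg W -1)\le d^2$, comfortably within $d^2+d$. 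No new idea is needed, but as submitted the proposal leaves this case as an unverified claim.
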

\begin{proof}
This is implicit in the proof of \cite[Theorem 5]{GS08}. In \cite[Lemma 4]{GS08}, Gyarmati and S\'{a}rk\"{o}zy defined the set $Y_1=\{u \in \F_q: H(x,u)=ch^2(x) \text{ for some } c \in \F_q \text{ and } h(x) \in \F_q[x]\}$ and proved that $|Y_1|\leq d^2+d$.
\end{proof}

Next, we deduce two corollaries.

\begin{corollary}\label{cor:badu}
The number of $u \in \F_q$ such that $f(x,u)$ is a constant multiple of a square of a polynomial is $O(d^2)$. 
\end{corollary}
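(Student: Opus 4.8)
The plan is to leverage the factorization $f(x,y)=F(x)G(y)H(x,y)$ and reduce the statement about $f(x,u)$ being a constant multiple of a square to a statement about $H(x,u)$, which is already controlled by \cref{lem:Hsquare}. First I would observe that $f(x,u)=F(x)G(u)H(x,u)$, where $F(x)$ is a fixed polynomial (independent of $u$) and $G(u)$ is merely a scalar. Thus $f(x,u)$ is a constant multiple of a square of a polynomial in $x$ precisely when $F(x)H(x,u)$ is, after discarding the scalar factor $G(u)$. The factor $F(x)$ is the same for every $u$, so I expect its contribution to be absorbable into the counting with only a bounded loss.

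The key reduction is to understand, for a fixed $F(x)$, when $F(x)H(x,u)$ is a constant multiple of a square. Writing $F(x)=F_0(x)^2 F_1(x)$ with $F_1$ squarefree, the product $F(x)H(x,u)$ is a constant times a square exactly when $F_1(x)H(x,u)$ is, i.e.\ when the squarefree part of $H(x,u)$ equals a constant multiple of $F_1(x)$. In particular, whenever $H(x,u)$ itself is a constant multiple of a square, one needs $F_1(x)$ to be a constant (the case $\deg F_1=0$), and the finitely many values of $u$ for which $H(x,u)$ is a constant times a square are already bounded by $d^2+d$ via \cref{lem:Hsquare}. For the remaining values of $u$, $H(x,u)$ has a nontrivial squarefree part, and requiring $F(x)H(x,u)$ to be a square forces that squarefree part to match $F_1(x)$ up to a scalar. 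The cleanest way to bound these is to note that $\deg_x H \le d$, so for each $u$ the polynomial $H(x,u)$ has degree at most $d$; the condition that its squarefree part (times the fixed $F_1$) is trivial imposes at most finitely many constraints, and I would bound the count by an $O(d^2)$ quantity coming directly from \cref{lem:Hsquare} together with the bounded exceptional set $\{u: G(u)=0\}$, which has size at most $\deg G \le d$.

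More directly, I would argue as follows: if $f(x,u)$ is a constant multiple of a square and $G(u)\neq 0$, then $F(x)H(x,u)$ is a constant multiple of a square. Since $F$ is fixed, write its squarefree part as $F_1$; then $H(x,u)$ must have squarefree part equal to a constant multiple of $F_1(x)$ (up to the square part of $F$). But the squarefree part of $H(x,u)$ is a constant multiple of a single fixed polynomial $F_1(x)$ for all such $u$, and one checks this forces $H(x,u)$ to be a constant multiple of $F_1(x)$ times a square, hence a constant multiple of a square exactly when $F_1$ is itself a constant times a square---reducing to the count in \cref{lem:Hsquare}. Combining, the number of bad $u$ is at most $(d^2+d)$ plus the at most $\deg G \le d$ roots of $G$, giving $O(d^2)$.

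The main obstacle will be handling the interaction between the fixed factor $F(x)$ and the $u$-dependent factor $H(x,u)$ cleanly, since $F(x)H(x,u)$ being a constant times a square is weaker than $H(x,u)$ alone being one. I expect the resolution is that the \emph{common} fixed squarefree part $F_1(x)$ can only be cancelled by the squarefree part of $H(x,u)$ for $u$ in a set governed by the resultant or discriminant conditions already embedded in the proof of \cref{lem:Hsquare}; since \cref{lem:Hsquare} bounds the truly degenerate case and the residual $F$-contribution adds only $O(d)$ exceptions, the total remains $O(d^2)$. I would be careful to invoke \cref{lem:NIZ} to guarantee $H(x,u)$ is not identically zero, so that ``squarefree part'' is well-defined for all relevant $u$.
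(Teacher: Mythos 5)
Your reduction is sound as far as it goes: writing $f(x,u)=F(x)G(u)H(x,u)$, discarding the at most $d$ roots of $G$, factoring $F=F_0^2F_1$ with $F_1$ squarefree, and observing that $F(x)H(x,u)$ is a constant times a square precisely when the squarefree part of $H(x,u)$ is a constant multiple of $F_1(x)$ --- all of this is correct, and it is essentially the same trichotomy as the paper's ($G(u)=0$; $H(x,u)$ a constant multiple of a square; nontrivial interaction between $F$ and $H(x,u)$). The gap is in the interaction case, $\deg F_1\geq 1$. Your final tally, ``$(d^2+d)$ plus the at most $\deg G\le d$ roots of $G$,'' simply omits these $u$, and neither of the justifications you gesture at closes the hole: \cref{lem:Hsquare} is of no use here, since it only counts $u$ for which $H(x,u)$ is a constant multiple of a square, whereas in this case $H(x,u)=c_uF_1(x)k_u(x)^2$ with $F_1$ nonconstant, so these $u$ lie outside its scope; and ``the residual $F$-contribution adds only $O(d)$ exceptions'' is asserted, not proved (the appeal to ``resultant or discriminant conditions already embedded in the proof of \cref{lem:Hsquare}'' is not an argument).

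The missing step is short, but it is the crux of the corollary, and it is exactly the paper's third case: if the squarefree part of $H(x,u)$ is a constant multiple of a nonconstant $F_1$, then $F_1(x)\mid H(x,u)$; fix a root $c\in\overline{\F_q}$ of $F_1$, so every such $u$ satisfies $H(c,u)=0$. By \cref{lem:NIZ}, $H(c,y)$ is not identically zero, hence it has at most $d$ roots in $u$, so this case contributes at most $d$ values (or at most $d^2$ if one ranges over all roots of $F$, as the paper does). Note that this is a different invocation of \cref{lem:NIZ} from the one you make: you use it only to ensure squarefree parts are well-defined, whereas here it is the counting tool. With this step inserted, your argument is complete and becomes essentially the paper's proof, rephrased in terms of squarefree parts rather than gcds.
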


\begin{proof}
If $f(x,u)$ is a constant multiple of a square of a polynomial, then at least one of the following happens:
\begin{itemize}
    \item $G(u)=0$.
    \item $H(x,u)$ is a constant multiple of a square of a polynomial.
    \item $\gcd(F(x),H(x,u))$ has degree at least $1$. 
\end{itemize}
The number of $u$ in the first situation is at most $d$, since the degree of $G$ is at most $d$. By \cref{lem:Hsquare}, the number of $u$ in the second situation is at most $O(d^2)$. In the third case, $F(x)$ and 
$H(x,u)$ have a common root $c$ in $\overline{\F_q}$. The common root $c$ has at most $d$ choices. Since the polynomial $H(x,c)$ is not identically zero by \cref{lem:NIZ}, the number of $u$ such that $H(x,u)=0$ is at most $d$. Thus, the number of $u$ in the third family is at most $d^2$. 
\end{proof}

We can prove the following corollary similarly.

\begin{corollary}\label{cor:badpairs}
Let $C \subset \F_q$. The number of pairs $(u,v) \in C \times C$ such that $f(x,u)f(x,v)$ is a constant multiple of a square of a polynomial is at most $(d^2+d)^2+(d^2+2d)|C|$.    
\end{corollary}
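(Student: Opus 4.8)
The plan is to adapt the case analysis used in \cref{cor:badu} to the product $f(x,u)f(x,v)$. I would begin by fixing $C \subset \F_q$ and asking when $f(x,u)f(x,v)$ fails to be squarefree up to a constant multiple. Writing $f(x,u)f(x,v) = F(x)^2 G(u)G(v) H(x,u)H(x,v)$, the factor $F(x)^2$ is already a square and contributes nothing, so the product is a constant multiple of a square precisely when $G(u)G(v) H(x,u)H(x,v)$ is. Since $G(u)G(v)$ is a scalar, the condition reduces to $H(x,u)H(x,v)$ being a constant multiple of a square in $\F_q[x]$, together with the degenerate possibilities $G(u)=0$ or $G(v)=0$.

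Next I would organize the bad pairs $(u,v)$ into the analogous families. The cases where $G(u)=0$ or $G(v)=0$ each pin down one coordinate to at most $d$ values while the other ranges over $C$, contributing $O(d|C|)$ pairs. The substantive case is $H(x,u)H(x,v)$ a constant times a square; here I would split further according to whether each factor is individually a constant multiple of a square. If both $H(x,u)$ and $H(x,v)$ are constant multiples of squares, then by \cref{lem:Hsquare} each of $u,v$ lies in the set $Y_1$ of size at most $d^2+d$, giving at most $(d^2+d)^2$ such pairs. The remaining subcase, where neither is individually a square but their product is, forces $H(x,u)$ and $H(x,v)$ to share, outside of their square parts, a common irreducible factor; this should again confine one coordinate to $O(d)$ values given the other.

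\emph{The main obstacle} will be the mixed subcase: $H(x,u)$ and $H(x,v)$ are each non-square, yet $H(x,u)H(x,v)$ is a constant multiple of a square. Here the idea is that writing $H(x,u)=s(x)^2 p(x)$ and $H(x,v)=t(x)^2 q(x)$ with $p,q$ squarefree, the product is a square only if $p$ and $q$ have the same radical, so the non-square parts must coincide up to scalars. I would argue that for a fixed $u$ (with $H(x,u)$ not a constant times a square), the value $v$ is constrained because the squarefree part of $H(x,v)$ must match a fixed polynomial determined by $u$; tracking a root of that squarefree part as a root of $H(x,y)$ and invoking \cref{lem:NIZ} shows that each such root forces $H(x,v)$ to vanish at that root, cutting $v$ down to at most $d$ values. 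Summing $O(d)$ choices of $v$ over the $|C|$ choices of $u$, plus the symmetric count, yields the $O(d|C|)$ contribution; combining with the $(d^2+d)^2$ term from the both-square subcase gives the stated bound $(d^2+d)^2+(d^2+2d)|C|$. The bookkeeping of the exact constant $d^2+2d$ is where care is needed, but it follows the same template as \cref{cor:badu} and should not present a conceptual difficulty.
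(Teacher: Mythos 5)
Your proposal is correct and follows essentially the same route as the paper: the same three-family decomposition (pairs with $G(u)G(v)=0$; pairs where both $H(x,u)$ and $H(x,v)$ are constant multiples of squares, bounded via \cref{lem:Hsquare}; and the remaining pairs, bounded by locating a common root $c$ and using \cref{lem:NIZ} to limit the solutions of $H(c,v)=0$ to at most $d$ values). Your handling of the last family is in fact slightly sharper than the paper's---matching the squarefree parts up to a scalar pins down a root $c$ determined by $u$ alone, giving about $3d|C|$ pairs where the paper allows $d$ choices of common root per $u$ and settles for $(d^2+2d)|C|$---so the stated bound follows a fortiori; the only cleanup needed is that your ``symmetric count'' is redundant (fixing $u$ and counting $v$ already enumerates all ordered pairs) and that the ``exactly one factor is a square'' subcase is vacuous, which your radical-matching observation already implies.
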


\begin{proof}
Let $(u,v) \in C \times C$. If $f(x,u)f(x,v)=F(x)^2G(u)G(v)H(x,u)H(x,v)$ is a constant multiple of a square of a polynomial, then one of the following holds:
\begin{itemize}
    \item $G(u)G(v)=0$. 
    \item $H(x,u)$ and $H(x,v)$ are both a constant multiple of a square of a polynomial.
    \item $H(x,u)$ is not a constant multiple of a square of a polynomial, and $\gcd(H(x,u), H(x,v))$ has a degree at least $1$. 
\end{itemize}

Clearly, the number of pairs $(u,v)$ in the first family is at most $2d|C|$. By \cref{lem:Hsquare}, the second family has size at most $(d^2+d)^2$. 

For the third family, $H(x,u), H(x,v)$ have a common root $c$ in $\overline{\F_q}$. Fix $u$, the common root $c$ has at most $d$ choices since $H(x,u)$ is not identically zero. We have $H(c,v)=0$. Since the polynomial $H(c, y)$ is not identically zero by \cref{lem:NIZ}, the number of $v$ such that $H(c,v)=0$ is at most $d$. Thus, the number of pairs $(u,v)$ in the third family is at most $d^2|C|$. 

We conclude that the number of pairs $(u,v) \in C \times C$ such that $f(x,u)f(x,v)$ is a constant multiple of a square of a polynomial is at most $(d^2+d)^2+(d^2+2d)|C|$.    
\end{proof}

\subsection{Upper bound on the clique number of $X_{f,q}$}\label{sec:ub}
In this subsection, we derive an upper bound on $\omega(X_{f,q})$. Note that for a family $\mathcal{G}$ of quasi-random graphs with property $QR(\theta)$ and a graph $G\in\mathcal{G}$ with $n$ vertices, we have $\omega(G)=O(n^\theta)$. Indeed, this follows from a simple observation: if $S$ is a maximum clique in $G$, then we have $e(S)=|S|(|S|-1)/2$; on the other hand, we know that $e(S)=|S|^2/4+O(n^\theta|S|)$. \cref{thm: 3/4 intro} thus implies that $\omega(X_{f,q})=O_d(q^{3/4})$, whenever $f$ is an admissible polynomial with degree $d$. However, using tools from character sums, we show the better bound $\omega(X_{f,q})=O_d(\sqrt{q})$ in \cref{cor:ub} below.

We first recall Weil's bound for complete character sums; see, for example, \cite[Theorem 5.41]{LN97}.

\begin{lemma}(Weil's bound)\label{Weil}
Let $\chi$ be a multiplicative character of $\F_q$ of order $k>1$, and let $g \in \F_q[x]$ be a monic polynomial of positive degree that is not an $k$-th power of a polynomial. 
Let $n$ be the number of distinct roots of $g$ in its
splitting field over $\F_q$. Then for any $a \in \F_q$,
$$\bigg |\sum_{x\in\mathbb{F}_q}\chi\big(ag(x)\big) \bigg|\le(n-1)\sqrt q \,.$$
\end{lemma}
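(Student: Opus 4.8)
The plan is to recognize the stated inequality as a statement about an exponential-sum $L$-function and to reduce it to the Riemann hypothesis for curves over finite fields. A preliminary normalization: since $g$ is monic of positive degree and is not a $k$-th power, the polynomial $ag(x)$ is also not a $k$-th power for any $a \in \F_q^*$ (comparing leading coefficients, $ag=h^k$ would force $a$ to be a $k$-th power and hence $g$ itself to be one). The case $a=0$ is trivial since $\chi(0)=0$, so we may assume $a \neq 0$, and then $x \mapsto \chi(ag(x))$ is a genuinely nontrivial multiplicative datum. First I would record the companion sums over every finite extension: for each $m \ge 1$ set $S_m = \sum_{x \in \F_{q^m}} \chi_m(ag(x))$, where $\chi_m = \chi \circ N_{\F_{q^m}/\F_q}$ is the lift of $\chi$ along the norm, so that $S_1$ is the sum to be bounded. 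I would then assemble them into the $L$-function
\[
L(T) = \exp\!\left(\sum_{m \ge 1} \frac{S_m}{m}\, T^m\right),
\]
and the whole argument reduces to controlling the number and the size of the reciprocal zeros of $L$.

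The first fact I would establish is that $L(T)$ is a \emph{polynomial of degree exactly $n-1$}. Conceptually, one interprets $S_m$ through the Grothendieck–Lefschetz trace formula applied to the rank-one Kummer sheaf $\mathcal{L}_\chi(ag)$ on the open curve $U = \mathbb{A}^1 \setminus \{\text{roots of } g\}$. Because $\chi$ is a multiplicative (hence tame) character and $ag$ is not a $k$-th power, this sheaf is geometrically nontrivial, so $H^0_c$ and $H^2_c$ vanish and $S_m = \Tr(\mathrm{Frob}_q^m \mid H^1_c)$. A tame Euler–Poincaré (Grothendieck–Ogg–Shafarevich) computation on $U$, which sits inside $\mathbb{P}^1$ as the complement of the $n$ finite roots together with $\infty$, gives Euler characteristic $2-(n+1)=1-n$ and hence $\dim H^1_c = n-1$; therefore $L(T) = \det\!\bigl(1 - \mathrm{Frob}_q\, T \mid H^1_c\bigr)$ has degree $n-1$. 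Writing $L(T) = \prod_{i=1}^{n-1}(1 - \alpha_i T)$, the definition of $L$ yields $S_1 = \sum_{i=1}^{n-1}\alpha_i$. The classical route that avoids sheaves is to replace this step by the zeta function of the superelliptic curve $y^k = ag(x)$ and to isolate its $\chi$-isotypic factor, reading off the degree from a genus computation; both give the same count $n-1$.

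The second fact, which is the crux, is \emph{purity}: every reciprocal zero satisfies $|\alpha_i| \le \sqrt{q}$ (in fact $|\alpha_i| = \sqrt q$). This is precisely the Riemann hypothesis for curves over finite fields — Weil's theorem, or in sheaf language Deligne's purity applied to the weight-zero local system $\mathcal{L}_\chi(ag)$. Granting it, the desired estimate is immediate:
\[
\left|\sum_{x\in\F_q}\chi(ag(x))\right| = |S_1| = \left|\sum_{i=1}^{n-1}\alpha_i\right| \le (n-1)\sqrt{q}.
\]

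The hard part is unambiguously the purity input $|\alpha_i| = \sqrt q$: this is the deep theorem (Weil for curves, Deligne in general), and no elementary rearrangement of the sum produces the square-root cancellation on its own. If one wanted a self-contained proof, the most economical substitute is Stepanov's polynomial method, which establishes the Riemann hypothesis for such curves—and hence square-root savings for these character sums—via an explicit auxiliary-polynomial construction rather than cohomology; by contrast the degree count of $L$ is routine bookkeeping. In practice, and as is done here, one simply invokes this as a standard black box, e.g. \cite[Theorem 5.41]{LN97}.
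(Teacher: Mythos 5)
Your proposal is correct in outline, but note that the paper does not actually prove this lemma: it is stated as a known result and attributed to \cite[Theorem~5.41]{LN97}, so there is no internal proof to compare against. What you have written is the standard proof of that cited theorem: pass to the companion sums $S_m$ over extensions, package them into $L(T)=\exp\big(\sum_{m\ge 1} S_m T^m/m\big)$, show via geometric nontriviality of the Kummer datum and a tame Euler--Poincar\'e computation that $L$ is a polynomial of degree exactly $n-1$, and then invoke the Riemann hypothesis for curves to bound each inverse root by $\sqrt{q}$. Your preliminary normalization is exactly right: the case $a=0$ is trivial, and for $a\neq 0$ the leading-coefficient argument shows $ag$ is not a $k$-th power (the same argument shows it is not even a constant times a $k$-th power, which is what geometric nontriviality of $\mathcal{L}_{\chi(ag)}$ genuinely requires, since over $\overline{\F_q}$ all constants are $k$-th powers). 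Since the deep input --- purity, i.e.\ Weil's RH for curves --- remains a black box in your write-up, your proof is not more self-contained than the paper's citation; its value is that it makes the reduction explicit and correctly identifies where the depth lies (and that Stepanov's method is the elementary substitute). Two minor inaccuracies, neither affecting the bound: with your conventions the trace formula and the vanishing of $H^0_c,H^2_c$ give $S_m=-\Tr(\mathrm{Frob}_q^m\mid H^1_c)$ and hence $S_1=-\sum_i \alpha_i$, not $+\sum_i\alpha_i$ (harmless after taking absolute values); and the parenthetical claim that $|\alpha_i|=\sqrt{q}$ for every $i$ is false in general, since weights can drop for eigenvalues attached to punctures where the local monodromy is trivial --- e.g.\ for $g(x)=x^k(x-1)$ the sum $\sum_x \chi\big(ax^k(x-1)\big)$ has absolute value $1$ while $n-1=1$ --- so only the inequality $|\alpha_i|\le\sqrt{q}$ holds, and that is all the argument needs.
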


Next, we use Weil's bound to deduce an upper bound estimate on an incomplete $2$-dimensional character sum.

\begin{proposition}\label{prop:charsum}
Let $C \subset \F_q$. Let $f \in \F_q[x,y]$ be of degree $d$, such that its primitive kernel $H$ is not a multiple of a square of a polynomial. Let $\chi$ be the quadratic character. Then
$$
\sum_{a,b \in C} \chi(f(a,b))=O_d(|C|^{3/2}q^{1/4}+|C|q^{1/2}).
$$        
\end{proposition}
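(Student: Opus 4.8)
The plan is to estimate the double sum $\sum_{a,b \in C} \chi(f(a,b))$ by fixing the outer variable $a$ and applying a Cauchy--Schwarz argument to the inner sum, thereby converting the incomplete inner character sum into one to which Weil's bound can be applied. First I would write the target as
\[
\Sigma := \sum_{a \in C} \sum_{b \in C} \chi(f(a,b)),
\]
and isolate the inner sum $S(a) = \sum_{b \in C} \chi(f(a,b))$. Since $C$ is an arbitrary subset of $\F_q$, the inner sum is incomplete, and I cannot apply Weil's bound to it directly. The standard device is to introduce the full field as an index set and detect membership in $C$ via a completion step: write $S(a) = \sum_{b \in \F_q} \mathbf{1}_C(b)\,\chi(f(a,b))$, and then square and sum over $a \in C$ (or over all of $\F_q$, bounding $|C|$ trivially on the outside) so that the indicator on $b$ can be handled after expanding the square.

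The key computation is the second-moment estimate. Applying Cauchy--Schwarz in the variable $a$ gives
\[
|\Sigma|^2 \le |C| \sum_{a \in C} |S(a)|^2 \le |C| \sum_{a \in \F_q} \Bigl| \sum_{b \in C} \chi(f(a,b)) \Bigr|^2,
\]
and expanding the modulus squared yields
\[
\sum_{a \in \F_q} \sum_{b,b' \in C} \chi\bigl(f(a,b)\bigr)\chi\bigl(f(a,b')\bigr)
= \sum_{b,b' \in C} \sum_{a \in \F_q} \chi\bigl(f(a,b)f(a,b')\bigr).
\]
Now the inner sum over $a \in \F_q$ is a \emph{complete} one-dimensional character sum in $a$, with the polynomial $g_{b,b'}(x) = f(x,b)f(x,b')$. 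This is exactly where \cref{Weil} applies: whenever $f(x,b)f(x,b')$ is not a constant multiple of a square of a polynomial in $x$, the number of distinct roots is $O_d(1)$, so the complete sum is $O_d(\sqrt q)$. There are $|C|^2$ pairs $(b,b')$ total, contributing $O_d(|C|^2 \sqrt q)$, while the diagonal-type "bad" pairs — those for which $f(x,b)f(x,b')$ \emph{is} a constant multiple of a square, making the inner sum as large as $q$ — are controlled by \cref{cor:badpairs}, which bounds their number by $O_d(|C| + 1)$ and hence their total contribution by $O_d(|C| q)$. Collecting terms gives $|\Sigma|^2 = O_d\bigl(|C|(|C|^2 \sqrt q + |C| q)\bigr) = O_d(|C|^3 \sqrt q + |C|^2 q)$, and taking square roots produces $|\Sigma| = O_d(|C|^{3/2} q^{1/4} + |C|\, q^{1/2})$, as claimed.

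The main obstacle I anticipate is the careful accounting of the bad pairs. For a generic pair $(b,b')$ the polynomial $f(x,b)f(x,b')$ need not be squarefree even when it is not a perfect square times a constant — it can share repeated factors coming from the $F(x)$ part or from coincidences among roots of $H(x,b)$ and $H(x,b')$ — so I must confirm that the number of distinct roots is still $O_d(1)$ to feed into Weil's bound, and that the genuinely problematic pairs (where the sum fails to be small) are precisely those covered by the count in \cref{cor:badpairs}. A secondary technical point is ensuring the polynomial passed to \cref{Weil} is monic of positive degree; this is harmless since $\chi$ of a nonzero constant only contributes a bounded multiplicative factor, and the degenerate cases where $f(x,b)$ or $f(x,b')$ vanishes identically in $x$ are ruled out by \cref{lem:NIZ} together with the finitely many zeros of $F$ and $G$. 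Once these bookkeeping issues are settled, the estimate follows by assembling the two contributions.
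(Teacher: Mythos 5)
Your proposal is correct and follows essentially the same route as the paper's own proof: the same Cauchy--Schwarz step completing the outer sum from $C$ to $\F_q$, the same expansion into pairs $(u,v)\in C\times C$ with the complete inner sum $\sum_{x\in\F_q}\chi(f(x,u)f(x,v))$, \cref{Weil} applied to the pairs where this product is not a constant multiple of a square, and \cref{cor:badpairs} to bound the contribution of the remaining pairs trivially by $q$ each. The bookkeeping concerns you raise are handled exactly as you anticipate (the bad pairs are by definition those counted in \cref{cor:badpairs}, and repeated-but-not-square factors only affect the root count $n=O_d(1)$ in Weil's bound), so nothing further is needed.
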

\begin{proof}
By the Cauchy-Schwarz inequality,
$$
\bigg |\sum_{a,b \in C} \chi(f(a,b))\bigg |^2 \leq |C| \bigg(\sum_{a \in C} \bigg|\sum_{b \in C}\chi(f(a,b))\bigg|^2\bigg) \leq |C| \bigg(\sum_{x \in \F_q} \bigg|\sum_{b \in C}\chi(f(x,b))\bigg|^2\bigg).
$$
Note that
$$
\sum_{x \in \F_q} \bigg |\sum_{b \in C}\chi(f(x,b))\bigg |^2= \sum_{u, v \in C}\bigg(\sum_{x \in \F_q} \chi(f(x,u)f(x,v))\bigg).
$$
If $f(x,u)f(x,v)$ is not a constant multiple of a square of a polynomial, then Weil's bound implies the inner sum is bounded by $d\sqrt{q}.$ On the other hand, \cref{cor:badpairs} implies that the number of pairs $(u,v) \in C \times C$ such that $f(x,u)f(x,v)$ is a constant multiple of a square of a polynomial is at most $(d^2+d)^2+(d^2+2d)|C|$. Therefore,
$$
\sum_{u, v \in C} \bigg |\sum_{x \in \F_q} \chi(f(x,u)f(x,v))\bigg|\leq |C|^2d\sqrt{q}+((d^2+d)^2+(d^2+2d)|C|)q  =O_d(|C|^2\sqrt{q}+|C|q).
$$
We conclude that
$$
\bigg |\sum_{a,b \in C} \chi(f(a,b))\bigg |^2 =O_d(|C|^3q^{1/2}+|C|^2q),
$$
as required.
\end{proof}

\begin{corollary}\label{cor:ub}
Let $f \in \F_q[x,y]$ be admissible with degree $d$. Then $\omega(X_{f,q})=O_d(\sqrt{q})$.  
\end{corollary}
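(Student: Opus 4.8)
The plan is to apply the incomplete character sum estimate of \cref{prop:charsum} to a maximum clique and compare it against a trivial lower bound. Let $C$ be a clique in $X_{f,q}$ of size $m=\omega(X_{f,q})$, and consider the sum $S := \sum_{a,b \in C} \chi(f(a,b))$, where $\chi$ is the quadratic character. Since $f$ is admissible, its primitive kernel $H$ is not a constant multiple of a square of a polynomial, so the hypotheses of \cref{prop:charsum} are met and it gives $S = O_d(m^{3/2} q^{1/4} + m q^{1/2})$. The strategy is to complement this upper bound with a lower bound on $S$ of order $m^2$, and then to extract the desired bound on $m$ by solving the resulting inequality.

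For the lower bound, I would split $S$ into diagonal and off-diagonal contributions. The diagonal terms $a=b$ contribute $\sum_{a \in C}\chi(f(a,a))$, which has absolute value at most $m$. For the off-diagonal terms, the key point is that $C$ is a clique: for distinct $a,b \in C$ the value $f(a,b)$ is a square, so $\chi(f(a,b)) \in \{0,1\}$, and it equals $1$ unless $f(a,b)=0$. Thus the off-diagonal part equals $m(m-1) - Z$, where $Z$ is the number of ordered pairs $(a,b) \in C \times C$ with $a \neq b$ and $f(a,b)=0$. Altogether $S \geq m^2 - 2m - Z$.

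The crux is the bound $Z = O_d(m)$. Writing $f(x,y)=F(x)G(y)H(x,y)$, I fix $a \in C$ and examine $f(a,y)=F(a)G(y)H(a,y)$. There are at most $\deg F \le d$ values of $a$ with $F(a)=0$, and each such $a$ contributes at most $m$ pairs. For the remaining $a$, the factor $F(a)$ is a nonzero constant, $G$ is a nonzero polynomial, and $H(a,y)$ is not identically zero by \cref{lem:NIZ}; hence $f(a,y)$ is a nonzero polynomial of degree at most $d$ in $y$, contributing at most $d$ values of $b$. Summing over the two cases yields $Z = O_d(m)$, so $S = m^2 - O_d(m)$.

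Finally I would combine the two estimates: $m^2 - O_d(m) \le S = O_d(m^{3/2}q^{1/4} + mq^{1/2})$. Dividing through by $m$ gives $m \le C_d m^{1/2} q^{1/4} + C_d q^{1/2}$ for a constant $C_d$ depending only on $d$, once $q$ is large enough that the $O_d(1)$ term is absorbed into $C_d q^{1/2}$. If the term $C_d q^{1/2}$ dominates, then $m = O_d(\sqrt q)$ directly; otherwise $m \le 2 C_d m^{1/2} q^{1/4}$, whence $m^{1/2} \le 2 C_d q^{1/4}$ and again $m = O_d(\sqrt q)$. In either case $\omega(X_{f,q}) = O_d(\sqrt q)$, as claimed. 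I expect the only genuinely delicate step to be the bound $Z = O_d(m)$, since it requires ruling out the degenerate case where $f(a,\cdot)$ vanishes identically, which is precisely where \cref{lem:NIZ} and the factorization $f=FGH$ enter.
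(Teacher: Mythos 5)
Your proof is correct and follows essentially the same route as the paper: lower-bound $\sum_{a,b\in C}\chi(f(a,b))$ by roughly $|C|^2$ using the clique property, upper-bound it via \cref{prop:charsum}, and solve for $|C|$. The only (harmless) difference is in counting the degenerate pairs with $f(a,b)=0$: you handle the case where $f(a,\cdot)$ vanishes identically directly via the factorization $f=FGH$ and \cref{lem:NIZ}, whereas the paper invokes \cref{cor:badu} for the same purpose, and in fact your bound $Z=O_d(|C|)$ is slightly sharper than the paper's $O(d^2)|C|$ term.
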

\begin{proof}
Let $\chi$ be the quadratic character of $\F_q$. Let $C$ be a maximum clique of $X_{f,q}$. For each $a,b \in C$ with $a \neq b$, we have $\chi(f(a,b))=1$ unless $f(a,b)=0$. Note that if $f(x,b)$ is not identically zero, then the number of $a \in C$ such that $f(a,b)=0$ is at most $d$. On the other hand, by \cref{cor:badu}, the number of $b \in C$ such that $f(x,b)$ is the zero polynomial is $O(d^2)$. It follows that
$$
\sum_{a,b \in C} \chi(f(a,b)) \geq \sum_{a,b \in C, a \neq b} \chi(f(a,b)) -|C| \geq |C|^2-2|C|-d|C|-O(d^2)|C|=|C|(|C|-O(d^2)).
$$
Combining the above lower bound estimate on $\sum_{a,b \in C} \chi(f(a,b))$ with the upper bound stated in \cref{prop:charsum}, we conclude that $\omega(X_{f,q})=|C|=O_d(\sqrt{q})$.
\end{proof}

\begin{remark}
\cref{cor:ub} is best possible if $q$ is a square and $f \in \F_{\sqrt{q}}[x,y]$. Indeed, if that is the case, then the subfield 
$\F_{\sqrt{q}}$ forms a clique in $X_{f,q}$ since all elements in $\F_{\sqrt{q}}$ are squares in $\F_q$. In particular, if $q$ is a square, then $\omega(P_q)=\sqrt{q}$.
\end{remark}

\section{quasi-random graphs} \label{sec: quasirandom}

\subsection{Equivalent definitions of quasi-random graphs}\label{sec: equivalentdef}
Let $\mathcal{G}$ be a family of graphs. In \cite{CGW89}, Chung, Graham, and Wilson showed that the following statement, which they call $P_3$, is equivalent to $P_4$ in \cref{def: quasirandom}:
If for each $G \in \mathcal{G}$, we have 
$$
P_3: \quad e(G) \ge (1+o(1))\frac{n^2}{4}, \quad \lambda_1=(1+o(1))\frac{n}{2}, \quad\lambda_2=o(n)
$$
as $|V(G)|=n\to \infty$. 
We also note that $P_4$ is equivalent to $P_4'$ in \cref{sec: intro}, and we now give a short proof of this argument.
It is obvious that $P_4'$ implies \cref{def: quasirandom}. To see the converse, note that \cref{def: quasirandom} implies that if $S,T$ are disjoint, then $e(S,T)=e(S \cup T)-e(S)-e(T)=\frac{1}{2}|S||T|+o(n^2)$. If $S,T$ are not disjoint, let $W=S \cap T$. 
Hence, we have $$e(S,T)=e(S\setminus W, T \setminus W)+2e(W)+e(S\setminus W, W)+e(T \setminus W,W)=\frac{1}{2}|S||T|+o(n^2).$$
Lastly, if $\mathcal{G}$ is a family of quasi-random graphs, then the following $P_0'$ (proved in \cite{CGW89}) holds:
\[
P_0': \text{ For each $G\in \mathcal{G}$, all but }o(|V(G)|) \text{ vertices of } G \text{ have degree }(1+o(1))\frac{|V(G)|}{2}.
\]

\subsection{No family of quasi-random graphs has property $QR(\theta)$ with $\theta<1/2$} \label{subsec: no less than 1/2} 

In this subsection, we prove the following proposition.
\begin{proposition}\label{prop: onehalf}
There is no family of quasi-random graphs with property $QR(\theta)$, where $\theta<1/2$. 
\end{proposition}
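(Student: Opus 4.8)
The plan is to exploit the fact that inequality~\eqref{eq: beta-quasi} must hold for \emph{every} pair of subsets $S,T$, and in particular for a single vertex tested against its neighborhood and against its non-neighborhood. Testing a singleton against a neighborhood will pin the relevant degree from above, and testing it against the complementary set will pin the same degree from below; for $\theta<1/2$ these two constraints are mutually incompatible once $n$ is large. So I would argue by contradiction: suppose $\mathcal{G}$ is a family of quasi-random graphs with property $QR(\theta)$ for some $\theta<1/2$, with witnessing constant $c>0$. Fix any $G\in\mathcal{G}$ with $n=|V(G)|$ vertices, fix a vertex $v$, and write $d_v:=|N(v)|$ for its degree.

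The key steps, in order, are as follows. First I would take $S=\{v\}$ and $T=N(v)$. Since $G$ has no loops, $v\notin N(v)$, so $S\cap T=\emptyset$, $e(S,T)=d_v$, and $\tfrac{1}{2}|S||T|=\tfrac{1}{2}d_v$; plugging into \eqref{eq: beta-quasi} gives $\tfrac{1}{2}d_v\le c\,n^\theta\sqrt{d_v}$, whence $d_v\le 4c^2 n^{2\theta}$. Next I would take $S=\{v\}$ and $T=V(G)\setminus(N(v)\cup\{v\})$, so that $e(S,T)=0$ while $\tfrac{1}{2}|S||T|=\tfrac{1}{2}(n-1-d_v)$; now \eqref{eq: beta-quasi} yields $\tfrac{1}{2}(n-1-d_v)\le c\,n^\theta\sqrt{n-1-d_v}$, whence $n-1-d_v\le 4c^2 n^{2\theta}$. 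Adding the two bounds gives $n-1\le 8c^2 n^{2\theta}$ for every $G\in\mathcal{G}$. Since $2\theta<1$, the right-hand side is $o(n)$, so this inequality fails once $n$ is large enough; because the number of vertices of graphs in $\mathcal{G}$ is unbounded by hypothesis, such a $G$ exists, giving the contradiction.

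I do not expect a genuine computational obstacle here: the entire argument is a short discrepancy estimate. The one point requiring care in the write-up is the choice of test sets together with the degenerate cases where $N(v)$ or its complement is empty, in which case the corresponding application of \eqref{eq: beta-quasi} holds trivially (both sides vanish), yet the displayed inequality $n-1\le 8c^2 n^{2\theta}$ still holds and the contradiction persists. The conceptual content, and the step I would emphasize most, is the recognition that property $QR(\theta)$ forces each vertex degree to be simultaneously $O(n^{2\theta})$ and within $O(n^{2\theta})$ of $n-1$; this is exactly the density-versus-regularity tension that cannot survive $\theta<1/2$, matching the heuristic that random graphs realize $\theta=1/2$ and no smaller exponent.
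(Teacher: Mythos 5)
Your proof is correct, and it takes a genuinely different and more elementary route than the paper's. The paper argues spectrally: it uses $P_0'$ to extract an almost $\frac{n}{2}$-regular induced subgraph, a trace computation together with Cauchy interlacing to deduce $|\lambda_2(G)|\geq(\tfrac{1}{2}+o(1))\sqrt{n}$, and then invokes Lev's partial converse to the expander mixing lemma to produce sets $S,T$ whose discrepancy is at least of order $\sqrt{n}/\log n$, contradicting $QR(\theta)$ for $\theta<1/2$. You instead test inequality~\eqref{eq: beta-quasi} directly on the pairs $(\{v\},N(v))$ and $(\{v\},V(G)\setminus(N(v)\cup\{v\}))$, which forces $\deg(v)\leq 4c^2n^{2\theta}$ and $n-1-\deg(v)\leq 4c^2n^{2\theta}$ simultaneously, hence $n-1\leq 8c^2n^{2\theta}$, impossible for large $n$ when $2\theta<1$; your handling of the degenerate cases (empty neighborhood or empty non-neighborhood) is also right, since the relevant bound then holds trivially. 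Your route is shorter, avoids all spectral machinery, and is quantitatively sharper: it shows that for \emph{every individual} graph on $n$ vertices the two-set (bi-jumbled) error term $cn^\theta$ must be at least $\sqrt{(n-1)/8}$, with no logarithmic loss, whereas the paper's method only yields a lower bound of order $\sqrt{n}/\log n$. What the paper's longer argument buys is the intermediate spectral fact that quasi-random graphs in such a family satisfy $|\lambda_2(G)|\geq(\tfrac{1}{2}+o(1))\sqrt{n}$, which places the obstruction in the same eigenvalue framework as the sufficient condition of \cref{prop:expander mixing} used throughout the paper; your degree-counting argument, while cleaner for this proposition, does not produce that spectral information.
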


\begin{proof}
Let $\mathcal{G}$ be a family of quasi-random graphs with property $QR(\theta)$ such that $\theta<1/2$. Let $G\in\mathcal{G}$ be a graph with $n$ vertices. Using $P_0'$, we can take an induced subgraph $G'$ with $(1-o(1))n$ vertices such that all vertices in $G'$ have degree $(1+o(1))\frac{n}{2}$. Therefore, $\lambda_1(G')=(1+o(1))\frac{n}{2}$ and
$$\lambda_1(G')^2+\lambda_2(G')^2+\cdots +\lambda_{|V(G')|}(G')^2= \textup{tr}((G')^{2})= 2e(G')=\frac{1}{2}n^2+o(n^2).$$ 
It follows that 
$$
\lambda_2(G')^2 \geq \frac{\frac{1}{2}n^2+o(n^2)-(1+o(1))(\frac{n}{2})^2}{n-1}=\frac{1}{4}n+o(n).
$$
By Cauchy's interlacing theorem, we have $|\lambda_2(G)|\geq (\frac{1}{2}+o(1))\sqrt{n}$. 
Besides, as $G\in\mathcal{G}$, we have 
\begin{equation}\label{eq:est}
e(S,T)=\frac{|S||T|}{2}+O(n^\theta  \sqrt{|S||T|}),
\end{equation}
for any subsets $S,T \subset V(G)$. In particular, we have $e(G)=\frac{n^2}{4}+O(n^{1+\theta})$. Let $\Delta$ be the maximum degree of $G$; then trivially $\Delta \leq n$. Therefore, $\Delta/|\lambda_2(G)|=O(\sqrt{n})$. 

On the other hand, using the partial converse to the expander mixing lemma \cite[Corollary 4]{L15} due to Lev, there are non-empty subsets $S,T \subset V(G)$, such that
\begin{equation}\label{eq:cem}
\bigg |e(S,T)- \frac{2e(G)}{n^2}|S||T|\bigg| \geq \frac{|\lambda_2(G)|}{32\sqrt{2} (\log(2\Delta/|\lambda_2(G)|)+4}\sqrt{|S||T|}.
\end{equation}
Note that
\begin{equation}\label{eq:eg}
\frac{2e(G)}{n^2}|S||T|=\frac{1}{2}|S||T|+O(n^{\theta-1}|S||T|)=\frac{1}{2}|S||T|+O(n^{\theta}\sqrt{|S||T|}).
\end{equation}
It follows from equations~\cref{eq:est}, \cref{eq:cem}, and \cref{eq:eg} that
$$
O(n^\theta \sqrt{|S||T|})=\bigg |e(S,T)- \frac{1}{2}|S||T|\bigg|\geq \frac{|\lambda_2(G)|}{32\sqrt{2} (\log (2\Delta/|\lambda_2(G)|)+4}\sqrt{|S||T|}-O(n^{\theta}\sqrt{|S||T|}),
$$
and thus
$$
\frac{|\lambda_2(G)|}{32\sqrt{2} (\log (2\Delta/|\lambda_2(G)|)+4}=O(n^\theta),
$$
which contradicts the assumption that $\theta<1/2$ since the left-hand side is $\gg \sqrt{n}/\log n$.
\end{proof}

\subsection{An expander mixing lemma for quasi-random graphs} 
In general, it is hard to verify if a given family of graphs has property $QR(\theta)$ for a given $\theta>0$ by checking all possible subsets of vertices of graphs in the family. In this subsection, we provide a sufficient condition in terms of a quantitative version of $P_3$ for checking the property $QR(\theta)$. This sufficient condition can also be viewed as an expander mixing lemma for quasi-random graphs.
Our proof is a refined version of the proof that $P_3$ implies $P_4$ by Chung, Graham, and Wilson \cite[Theorem 1]{CGW89}.

\begin{lemma}\label{prop:expander mixing}
Let $\alpha<3$ and $\beta, \gamma<1$. Let $\mathcal{G}$ be a family of graphs such that the number of vertices of graphs in $\mathcal{G}$ is unbounded.
\begin{enumerate}
    \item[\textup{(1)}] Assume that all graphs $G$ in the family $\mathcal{G}$ satisfy the following properties:
$$
\sum_{v \in V(G)} \Big(\deg(v)-\frac{n}{2}\Big)^2=O(n^\alpha), \quad \lambda_1(G)=\frac{n}{2}+O(n^\beta), \quad\lambda_2(G)=O(n^\gamma),
$$    
where $n=|V(G)|$.
Then, $\mathcal{G}$ is a family of quasi-random graphs with property $QR(\theta)$,
where 
$$\theta=\max((1+\alpha)/4, \beta, \gamma).$$
\item[\textup{(2)}] 
Moreover, if each graph $G \in \mathcal{G}$ has the additional property that all but $O(1)$ vertices in $G$ have degree $n/2+O(n^\eta)$ with $1/2\leq \eta<1$, then $\mathcal{G}$ is a family of quasi-random graphs with property $QR(\theta)$ with $\theta=\max(\eta, \beta, \gamma).$ 
\end{enumerate}
\end{lemma}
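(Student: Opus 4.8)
The plan is to work with the adjacency matrix $A$ of $G$ and to encode the edge count as the bilinear form $e(S,T)=\mathbf{1}_S^{\top}A\,\mathbf{1}_T$, where $\mathbf{1}_S,\mathbf{1}_T$ are the indicator vectors of $S,T$; this is precisely the count in which edges inside $S\cap T$ are weighted twice, matching $P_4'$. The whole argument is a quantitative expander mixing lemma, but the key difficulty is that the graphs here are only \emph{approximately} regular, so $\mathbf{1}$ need not be an eigenvector of $A$. Controlling this irregularity is exactly the role of the three hypotheses (the degree-deviation sum, the estimate on $\lambda_1$, and the bound on $\lambda_2$), and it is the crux of the proof.

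First I would write each indicator as its all-ones component plus an orthogonal part: $\mathbf{1}_S=\frac{|S|}{n}\mathbf{1}+\mathbf{1}_S^{\perp}$ with $\mathbf{1}_S^{\perp}\perp\mathbf{1}$ and $\|\mathbf{1}_S^{\perp}\|^2=|S|(1-|S|/n)\le|S|$, and similarly for $T$. Expanding $\mathbf{1}_S^{\top}A\,\mathbf{1}_T$ produces four terms. The all-ones/all-ones term equals $\frac{|S||T|}{n^2}\mathbf{1}^{\top}A\mathbf{1}=\frac{|S||T|}{n^2}\cdot 2e(G)$ and yields the main contribution $\tfrac12|S||T|$, with error governed by $\langle \mathbf{d}-\tfrac n2\mathbf{1},\mathbf{1}\rangle$ (here $\mathbf{d}=A\mathbf{1}$ is the degree vector), which Cauchy--Schwarz bounds by $\sqrt n\,\|\mathbf{d}-\tfrac n2\mathbf{1}\|=O(n^{(1+\alpha)/2})$. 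Using $A\mathbf{1}=\mathbf{d}$ and $\mathbf{1}\perp\mathbf{1}_T^{\perp}$, the two cross terms collapse to $\frac{|S|}{n}\langle \mathbf{d}-\tfrac n2\mathbf{1},\mathbf{1}_T^{\perp}\rangle$ and its transpose, each controlled again by $\|\mathbf{d}-\tfrac n2\mathbf{1}\|=O(n^{\alpha/2})$. A short computation shows all three of these error terms are $O(n^{(\alpha-1)/2}\sqrt{|S||T|})$, and the inequality $(\alpha-1)/2\le(1+\alpha)/4$, equivalent to $\alpha\le 3$, keeps them within the target $O(n^{\theta}\sqrt{|S||T|})$.

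The main obstacle is the last term $(\mathbf{1}_S^{\perp})^{\top}A\,\mathbf{1}_T^{\perp}$. If $\mathbf{1}$ were an eigenvector of $A$ this would live in the non-principal eigenspaces and be bounded instantly by $|\lambda_2|\,\|\mathbf{1}_S^{\perp}\|\|\mathbf{1}_T^{\perp}\|=O(n^{\gamma}\sqrt{|S||T|})$. Since it is not, diagonalizing $A$ leaves a leakage term $\lambda_1\langle\mathbf{1}_S^{\perp},v_1\rangle\langle\mathbf{1}_T^{\perp},v_1\rangle$ from the principal eigenvector $v_1$. As $\mathbf{1}_S^{\perp}\perp\mathbf{1}$, one has $\langle\mathbf{1}_S^{\perp},v_1\rangle=\langle\mathbf{1}_S^{\perp},v_1-\mathbf{1}/\sqrt n\rangle$, so this leakage is at most $\lambda_1(1-a^2)\sqrt{|S||T|}$ with $a=\langle v_1,\mathbf{1}/\sqrt n\rangle$. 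The heart of the argument is thus to show $v_1$ is close to $\mathbf{1}/\sqrt n$, which I would do by a Rayleigh-quotient estimate: with $w=\mathbf{1}/\sqrt n$ a unit vector and eigenvalues $\mu_1\ge\mu_2\ge\cdots$ in the usual order, $\lambda_1-w^{\top}Aw=\sum_i(\lambda_1-\mu_i)\langle w,v_i\rangle^2\ge(\lambda_1-\mu_2)(1-a^2)$, so $1-a^2\le(\lambda_1-w^{\top}Aw)/(\lambda_1-\mu_2)$. The numerator is $\lambda_1-\tfrac{2e(G)}{n}=O(n^{\beta})+O(n^{(\alpha-1)/2})$, while $\lambda_1=\tfrac n2+O(n^{\beta})$ and $\lambda_2=O(n^{\gamma})$ with $\beta,\gamma<1$ force the gap $\lambda_1-\mu_2=(\tfrac12+o(1))n$ (in particular $\mu_1$ is simple and $a$ may be taken nonnegative). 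Hence $\tfrac n2(1-a^2)=O(n^{\max(\beta,(\alpha-1)/2)})$ and the leakage is $O(n^{\theta}\sqrt{|S||T|})$; combining the four estimates gives $\theta=\max((1+\alpha)/4,\beta,\gamma)$, proving~(1).

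For part~(2) the only estimates that cost an $\alpha$-dependent exponent are those bounding $\|\mathbf{d}-\tfrac n2\mathbf{1}\|$ and its pairings, so I would rerun exactly these with the sharper degree information. Writing $B$ for the $O(1)$ exceptional vertices, I split $\mathbf{d}-\tfrac n2\mathbf{1}$ into its restriction to $B$ (at most $O(1)$ entries, each $O(n)$) and to the rest (each $O(n^{\eta})$). The good part has $\ell^2$-norm $O(n^{1/2+\eta})$ and pairs with $\mathbf{1}$ to give $O(n^{1+\eta})$, while the $O(1)$ bad entries contribute only $O(n)$ to any pairing. Repeating the four-term bookkeeping, and using $\eta\ge\tfrac12$ to absorb the stray $O(|S|)$ and $O(|T|)$ contributions of the bad vertices into $O(n^{\theta}\sqrt{|S||T|})$, replaces each $\alpha$-exponent by the corresponding $\eta$-exponent and yields $\theta=\max(\eta,\beta,\gamma)$.
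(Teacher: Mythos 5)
Your proof is correct, but it runs along a genuinely different track from the paper's. The paper follows the Chung--Graham--Wilson template: it first proves (\cref{claim1}) that the normalized all-ones vector $\bar{u}$ is within $O(n^{(\alpha-3)/4})$ of the principal eigenvector $\bar{e}_1$ --- via a Chebyshev truncation of the degree sequence, a trivial bound on the exceptional coordinates, and an optimization over the truncation level --- and then expands the indicator vectors along $\bar{e}_1$, so that only two terms remain, $\langle A\bar{s}',\bar{t}'\rangle$ and $\lambda_1\langle\bar{s},\bar{e}_1\rangle\langle\bar{t},\bar{e}_1\rangle$. You expand along the all-ones vector instead (the standard irregular-mixing setup), so the degree irregularity enters explicitly through $\|\mathbf{d}-\tfrac{n}{2}\mathbf{1}\|=O(n^{\alpha/2})$ and is dispatched by direct Cauchy--Schwarz, while the alignment of $v_1$ with $\mathbf{1}$ is recovered from the Rayleigh-quotient gap inequality $1-a^2\le(\lambda_1-w^{\top}Aw)/(\lambda_1-\mu_2)$ rather than from a residual bound on $A\bar{u}-\tfrac{n}{2}\bar{u}$. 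Both mechanisms rest on the spectral gap and on Perron--Frobenius, which is needed to identify $\mu_1=\lambda_1$ (so that every term $\lambda_1-\mu_i$ is nonnegative and $\max_{i\ge 2}|\mu_i|=|\lambda_2|$, given the paper's ordering of eigenvalues by absolute value); you gesture at this parenthetically and should state it explicitly, just as the paper invokes Perron--Frobenius to fix the sign of $b_1$. Incidentally, your bookkeeping is sharper: direct Cauchy--Schwarz yields the exponent $(\alpha-1)/2$ where the paper's truncation-plus-optimization yields $(\alpha+1)/4$, and since $(\alpha-1)/2\le(\alpha+1)/4$ precisely when $\alpha\le 3$, you in fact prove $QR\bigl(\max((\alpha-1)/2,\beta,\gamma)\bigr)$, a slightly stronger statement that implies the one claimed. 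Your part (2) --- splitting $\mathbf{d}-\tfrac{n}{2}\mathbf{1}$ into $O(1)$ exceptional coordinates plus a bulk of size $O(n^{\eta})$ each, and using $\eta\ge 1/2$ to absorb the stray $O(|S|)$ and $O(|T|)$ contributions into $O(n^{\theta}\sqrt{|S||T|})$ --- is also complete and reproduces the paper's conclusion $\theta=\max(\eta,\beta,\gamma)$.
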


Suppose that $\mathcal{G}$ is a family of graphs with all conditions assumed in \cref{prop:expander mixing} and let $G \in \mathcal{G}$ with $V=V(G)=\{v_1,v_2,\ldots,v_n\}$ and adjacency matrix $A$. Let $\{\bar{e}_i \mid i=1,2,\ldots,n\}$ denote a set of orthonormal eigenvectors corresponding to the eigenvalues $\{\lambda_i \mid i=1,2,\ldots,n\}$ of $A$, and set $\bar{u}=\frac{1}{\sqrt{n}}(1,1,\ldots,1)^t=\sum_i b_i \bar{e}_i$. The following claim is crucial in our proof.

\begin{claim}
\label{claim1}
There exists a vector $\bar{z_1}$ such that
$$\bar{u}= (1+O(||\bar{z_1}||))\bar{e_1} + \bar{z_1} \quad \text{with } \quad ||\bar{z_1}||=O(n^{\frac{\alpha-3}{4}}).
$$
Moreover, under the assumption of $(2)$, we can choose $\bar{z_1}$ such that $||\bar{z_1}||=O(n^{\eta-1})$.
\end{claim}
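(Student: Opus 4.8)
The plan is to treat the normalized all-ones vector $\bar{u}$ as an approximate eigenvector of the adjacency matrix $A$ for the eigenvalue $n/2$, the quality of the approximation being governed by the degree-regularity hypotheses. First I would fix the sign of $\bar{e}_1$ so that $b_1=\langle \bar{u},\bar{e}_1\rangle\ge 0$, and set $\bar{z}_1:=\bar{u}-b_1\bar{e}_1=\sum_{i\ge 2}b_i\bar{e}_i$, which is by construction orthogonal to $\bar{e}_1$. Since $\|\bar{u}\|=1$, orthogonality gives $b_1^2+\|\bar{z}_1\|^2=1$, so once we know $\|\bar{z}_1\|=o(1)$ we obtain
\[
b_1=\sqrt{1-\|\bar{z}_1\|^2}=1+O(\|\bar{z}_1\|^2)=1+O(\|\bar{z}_1\|),
\]
which is exactly the asserted decomposition $\bar{u}=(1+O(\|\bar{z}_1\|))\bar{e}_1+\bar{z}_1$. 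Everything therefore reduces to bounding $\|\bar{z}_1\|$.

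The engine of the bound is the identity $(A\bar{u})_j=\frac{1}{\sqrt{n}}\deg(v_j)$, which converts the combinatorial regularity hypothesis into a spectral one. Indeed,
\[
\Big\|A\bar{u}-\tfrac{n}{2}\bar{u}\Big\|^2=\frac{1}{n}\sum_{v\in V(G)}\Big(\deg(v)-\tfrac{n}{2}\Big)^2=\frac{1}{n}\,O(n^\alpha)=O(n^{\alpha-1}).
\]
Expanding in the eigenbasis, $A\bar{u}-\tfrac{n}{2}\bar{u}=\sum_i b_i(\lambda_i-\tfrac{n}{2})\bar{e}_i$, and hence $\sum_i b_i^2(\lambda_i-\tfrac{n}{2})^2=O(n^{\alpha-1})$. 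The key input is the spectral gap: for every $i\ge 2$ we have $|\lambda_i|\le|\lambda_2|=O(n^\gamma)=o(n)$, so $(\lambda_i-\tfrac{n}{2})^2\ge\tfrac{n^2}{4}(1-o(1))$. Discarding the nonnegative $i=1$ term and restricting to $i\ge 2$ yields $\tfrac{n^2}{4}(1-o(1))\|\bar{z}_1\|^2\le O(n^{\alpha-1})$, so that $\|\bar{z}_1\|=O(n^{(\alpha-3)/2})$. As $\alpha<3$, this is at least as strong as the claimed $O(n^{(\alpha-3)/4})$, and it confirms the $\|\bar{z}_1\|=o(1)$ needed above.

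For the ``moreover'' statement under hypothesis (2), I would rerun the identical argument after re-estimating the degree sum. Partition $V(G)$ into the $O(1)$ exceptional vertices and the rest: since every degree lies in $[0,n)$, each exceptional vertex contributes at most $(n/2)^2$, for a total of $O(n^2)$, while the remaining vertices deviate by $O(n^\eta)$ and contribute $O(n^{2\eta+1})$ in total. The hypothesis $\eta\ge 1/2$ forces $2\eta+1\ge 2$, so the bulk term dominates and $\sum_v(\deg(v)-\tfrac{n}{2})^2=O(n^{2\eta+1})$; substituting $\alpha=2\eta+1$ into the computation above gives $\|\bar{z}_1\|=O(n^{(2\eta+1-3)/2})=O(n^{\eta-1})$, as required.

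The argument is routine spectral linear algebra, so there is no deep obstacle. The two steps that demand care are the identification of $A\bar{u}$ with the normalized degree vector (the bridge from regularity to the spectrum) and, in the ``moreover'' case, the bookkeeping showing that the few vertices of uncontrolled degree cannot dominate the sum, which is precisely where $\eta\ge 1/2$ is used. It is worth recording that $\beta$ never enters the bound on $\|\bar{z}_1\|$: the $i=1$ term $b_1^2(\lambda_1-\tfrac{n}{2})^2=O(n^{2\beta})$ is simply discarded, and $\beta$ surfaces only later, through $\lambda_1$, when this claim is applied in the expander-mixing estimate.
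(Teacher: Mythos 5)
Your proof is correct, and it shares the paper's skeleton: expand $\bar{u}=\sum_i b_i\bar{e}_i$, bound $\|\bar{w}\|$ where $\bar{w}=A\bar{u}-\frac{n}{2}\bar{u}$, and use the spectral gap $|\lambda_i-\frac{n}{2}|\geq\frac{n}{2}(1-o(1))$ for $i\geq 2$ (which needs only $\gamma<1$) to control $\|\bar{z}_1\|=\bigl(\sum_{i\geq 2}b_i^2\bigr)^{1/2}$; fixing the sign of $\bar{e}_1$ so that $b_1\geq 0$ is a legitimate substitute for the paper's appeal to the Perron--Frobenius theorem. The genuine difference is how $\|\bar{w}\|$ is bounded. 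The paper converts the second-moment hypothesis into a vertex count via a Chebyshev-type inequality, combines that count with the trivial componentwise bound $|\bar{w}_j|\leq\frac{1}{2}\sqrt{n}$, and optimizes the threshold $\epsilon=n^{(\alpha-3)/4}$, obtaining $\|\bar{w}\|=O(n^{(\alpha+1)/4})$ and hence $\|\bar{z}_1\|=O(n^{(\alpha-3)/4})$. You instead use the exact identity $\|\bar{w}\|^2=\frac{1}{n}\sum_v\bigl(\deg(v)-\frac{n}{2}\bigr)^2=O(n^{\alpha-1})$, which costs nothing and yields the strictly stronger bound $\|\bar{z}_1\|=O(n^{(\alpha-3)/2})$ (stronger precisely because $\alpha<3$); your treatment of part (2) --- charging $O(n^2)$ to the $O(1)$ exceptional vertices and $O(n^{2\eta+1})$ to the bulk, then substituting $\alpha=2\eta+1$ --- recovers exactly the claimed $O(n^{\eta-1})$. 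The paper's detour through a vertex count is simply lossier here; your identity buys both a shorter argument and a sharper exponent, and if propagated through the proof of the lemma it would replace the exponent $(1+\alpha)/4$ by $(\alpha-1)/2$ in $\theta$ (this does not improve the paper's applications, where the $\gamma$ term dominates, but it is a genuine strengthening of the claim).
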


\begin{proof}
For any $\epsilon>0$, we have the inequality 
\[\#\{v \in V(G): \left| \deg(v) - \frac{n}{2}\right| > \epsilon n \} \le \frac{\sum_v (\deg(v)-\frac{n}{2})^2}{\epsilon^2 n^2}= \frac{O(n^{\alpha})}{\epsilon^2n^2}.\]
Thus, for any $\epsilon>0$, all but $\frac{1}{\epsilon^2 n^2}O(n^{\alpha})$ vertices of $G$ have degree in $[\frac{n}{2}-\epsilon n, \frac{n}{2}+\epsilon n]$. 
Recall that $\bar{u}=\frac{1}{\sqrt{n}}(1,1,\ldots,1)^t=\sum_i b_i \bar{e_i}$. Then $A\bar{u}=\sum_i b_i \lambda_i \bar{e_i}$. On the other hand, the $j$th component of the vector $A\bar{u}$ is $\deg(v_j)/\sqrt{n}$, where $v_j$ is the $j$th vertex of $G$. This implies that we can write
\begin{equation}\label{eq: Aubar}
A\bar{u}=\frac{1}{2}n\bar{u} + \bar{w},
\end{equation}
where all but $\frac{1}{\epsilon^2 n^2}O(n^{\alpha})$ components of $\bar{w}$ have absolute value at most $\epsilon \sqrt{n}$. 
Also, note each component of $\bar{w}$ is trivially bounded above by $\frac{1}{2}\sqrt{n}$ since each component of $A\bar{u}$ is trivially in $[0,\sqrt{n}]$.
It then follows that 
$$
||\bar{w}||^2\leq \frac{O(n^{\alpha})}{\epsilon^2n^2}\cdot n+n \cdot \epsilon^2n.
$$
Now we optimize the function of $\epsilon$ on the right-hand side of the above inequality by setting $\epsilon=n^{\frac{\alpha-3}{4}}$, and thus we have 
\begin{equation}\label{ineq: wbar}
||\bar{w}||\leq O(n^{\frac{\alpha+1}{4}}).
\end{equation}
Note that by equation \cref{eq: Aubar},
\[\sum_{i =1}^n\left(\lambda_i -\frac{n}{2}\right)b_i\bar{e}_i=\bar{w}.\]
This, together with inequality \cref{ineq: wbar}, implies that
\[\left( \sum_{i=1}^n\left(\lambda_i -\frac{n}{2}\right)^2 b_i^2 \right)^{\frac{1}{2}} = ||\bar{w}|| \le O(n^{\frac{\alpha+1}{4}}).\]
Since 
\[\left( \sum_{i \ne 1}\left(O(n^{\gamma}) -\frac{n}{2}\right)^2 b_i^2 \right)^{\frac{1}{2}}=\left|O(n^{\gamma})-\frac{n}{2}\right|\left( \sum_{i \ne 1} b_i^2 \right)^{\frac{1}{2}} \le ||\bar{w}||\le O(n^{\frac{\alpha+1}{4}}),\]
we obtain
\[\left( \sum_{i \ne 1} b_i^2 \right)^{\frac{1}{2}} \le \frac{O(n^{\frac{\alpha+1}{4}})}{\left|O(n^{\gamma})-\frac{n}{2}\right|} = O(n^{\frac{\alpha-3}{4}}).\]
Thus we have a vector $z_1$ such that $\bar{u}=b_1\bar{e}_1+\bar{z}_1$ with $||\bar{z_1}|| \le O(n^{\frac{\alpha-3}{4}})$.
In addition, by applying the triangle inequality to $||\bar{u}||=||b_1\bar{e}_1+\bar{z}_1||=1$, we have $|b_1|=1+O(||\bar{z_1}||)$.
By the Perron-Frobenius theorem,  $b_1=1+O(||\bar{z_1}||)$, and thus $\bar{u}= (1+O(||\bar{z_1}||))\bar{e_1} + \bar{z_1}$, as desired.    

Under the assumption of $(2)$, we have $||\bar{w}||^2 \leq O(n^{2\eta})+O(n)=O(n^{2\eta})$. Thus $||\bar{w}||=O(n^{\eta})$, and a similar argument shows that $||\bar{z_1}||=O(n^{\eta-1})$.
\end{proof}

Now we are ready to prove \cref{prop:expander mixing}.
\begin{proof}[Proof of \textup{\cref{prop:expander mixing}}]
Let $S,T \subset V$ and let $\bar{s}=(s_1,\ldots,s_n)$ be the characteristic vector of $S$. 
In other words, we have $s_i=1$ if $v_i\in S$, and $s_i=0$ otherwise.
Let $\bar{s}'=\bar{s}-\left \langle \bar{s},\bar{e}_1 \right \rangle \bar{e_1}$. By \cref{claim1}, we have
\begin{align}\label{eq: se}
\left \langle \bar{s},\bar{e}_1 \right \rangle 
=\frac{|S|}{\sqrt{n}}-\left \langle \bar{s},\bar{z}_1 + O(||\bar{z}_1||)\bar{e}_1\right \rangle 
=\frac{|S|}{\sqrt{n}}+ O(||\bar{s}||||\bar{z_1}||) 
& =\frac{|S|}{\sqrt{n}}+ O(n^{\frac{\alpha-3}{4}}\sqrt{|S|}). 
\end{align}

Similarly, let $\bar{t}=(t_1,\ldots,t_n)$ be the characteristic vectors of $T$, and let $\bar{t}'=\bar{t}-\left \langle \bar{t},\bar{e}_1 \right \rangle \bar{e_1}$.
Note that 
\begin{align*}
\left \langle A \bar{s}',\bar{t}' \right \rangle & = \left \langle A\bar{s},\bar{t} \right \rangle - \left \langle \bar{s},\bar{e}_1 \right \rangle \left \langle A \bar{e}_1,\bar{t} \right \rangle - \left \langle \bar{t},\bar{e}_1 \right \rangle \left \langle A \bar{s},\bar{e}_1 \right \rangle + \left \langle \bar{s},\bar{e}_1 \right \rangle \left \langle \bar{t},\bar{e}_1 \right \rangle \left \langle A\bar{e}_1,\bar{e}_1 \right \rangle \\
& = e(S,T) - \lambda_1 \left \langle \bar{s},\bar{e}_1 \right \rangle \left \langle \bar{t},\bar{e}_1 \right \rangle.
\end{align*}
Then we have
\[
e(S,T) = \left \langle A \bar{s}',\bar{t}' \right \rangle + \lambda_1 \left \langle \bar{s},\bar{e}_1 \right \rangle \left \langle \bar{t},\bar{e}_1 \right \rangle.
\]
Since $\left \langle \bar{s}',\bar{e_1} \right \rangle =0$ and $\left \langle \bar{t}',\bar{e_1} \right \rangle =0$, it follows that $$|\left \langle A\bar{s}',\bar{t}' \right \rangle |\le |\lambda_2|||s'|| ||t'|| \le |\lambda_2|||s|| ||t||=O(n^{\gamma}\sqrt{|S||T|}).$$
In addition, by equation  \cref{eq: se} and its analogue for $T$, we have
\begin{align*}
\left \langle \bar{s},\bar{e}_1 \right \rangle \left \langle \bar{t},\bar{e}_1 \right \rangle 
& =  \left( \frac{|S|}{\sqrt{n}} + O(n^{\frac{\alpha-3}{4}}\sqrt{|S|})\right) \left( \frac{|T|}{\sqrt{n}} + O(n^{\frac{\alpha-3}{4}}\sqrt{|T|}) \right)
 =\frac{|S||T|}{n} + O(n^{\frac{\alpha-3}{4}}\sqrt{|S||T|}),  
\end{align*}
since $\alpha<3$ and $|S|, |T|\leq n$. Therefore, we deduce
\begin{align*}
\lambda_1 \left \langle \bar{s},\bar{e}_1 \right \rangle \left \langle \bar{t},\bar{e}_1 \right \rangle & = \left( \frac{n}{2}+O(n^\beta) \right)
\left \langle \bar{s},\bar{e}_1 \right \rangle \left \langle \bar{t},\bar{e}_1 \right \rangle
 = \frac{|S||T|}{2} + O(n^{\beta}\sqrt{|S||T|}) + O(n^{\frac{\alpha+1}{4}}\sqrt{|S||T|})
\end{align*}
since $\beta<1$ and $|S|, |T|\leq n$. We conclude that
$$
 e(S,T)=\left \langle A \bar{s}',\bar{t}' \right \rangle +\lambda_1 \left \langle \bar{s},\bar{e}_1 \right \rangle \left \langle \bar{t},\bar{e}_1 \right \rangle=\frac{1}{2}|S||T|+O(n^{\theta}\sqrt{|S||T|}),
$$
where $\theta=\max((1+\alpha)/4, \beta, \gamma).$

Under the assumption of $(2)$, a similar argument shows that
$$
 e(S,T)=\frac{1}{2}|S||T|+O(n^{\theta}\sqrt{|S||T|}),
$$
where
$\theta=\max(\eta, \beta, \gamma).$
\end{proof}

\begin{corollary}\label{cor: expander2}
Let $0<\alpha<2,$ and $0<\beta, \gamma<1$. Let $\mathcal{G}$ be a family of graphs such that the number of vertices of graphs in $\mathcal{G}$ is unbounded.
Assume that all graphs $G$ in $\mathcal{G}$ satisfy the following properties:
$$
e(G)=\frac{n^2}{4}+O(n^\alpha), \quad \lambda_1(G)=\frac{n}{2}+O(n^\beta), \quad\lambda_2(G)=O(n^\gamma),
$$    
where $n=|V(G)|$. Then $\mathcal{G}$ is a family of quasi-random graphs with property $QR(\theta)$, where 
$\theta=\theta(\alpha,\beta, \gamma)=\max((2+\alpha)/4, (3+\beta)/4, \gamma).$
\end{corollary}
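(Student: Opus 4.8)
The plan is to reduce the corollary to part (1) of \cref{prop:expander mixing} by converting the given bound on the number of edges $e(G)$ into a bound on the degree variance $\sum_{v}(\deg(v)-n/2)^2$. Since the eigenvalue hypotheses $\lambda_1=n/2+O(n^\beta)$ and $\lambda_2=O(n^\gamma)$ are already in the form required by the lemma, the only missing ingredient is an estimate $\sum_v(\deg(v)-n/2)^2=O(n^{\alpha'})$ for a suitable exponent $\alpha'$, after which the lemma applies verbatim.

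First I would set up the spectral decomposition of the all-ones vector. Writing $\bar u=\frac{1}{\sqrt n}(1,\dots,1)^t=\sum_i b_i\bar e_i$ as in \cref{prop:expander mixing}, and letting $\mathbf{1}=\sqrt{n}\,\bar u$, the key identity is
$$\sum_{v\in V(G)}\Big(\deg(v)-\frac n2\Big)^2=\Big\|\big(A-\tfrac n2 I\big)\mathbf{1}\Big\|^2=n\sum_{i=1}^n b_i^2\Big(\lambda_i-\frac n2\Big)^2,$$
which holds because the $j$th coordinate of $A\mathbf{1}$ is $\deg(v_j)$. The $i=1$ term contributes at most $nb_1^2(\lambda_1-n/2)^2=O(n^{1+2\beta})$, using $b_1^2\le\|\bar u\|^2=1$ and the hypothesis on $\lambda_1$, while each term with $i\ge2$ satisfies $(\lambda_i-n/2)^2\le(n/2+O(n^\gamma))^2=O(n^2)$. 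Hence the whole sum is $O(n^{1+2\beta})+O(n^3)\cdot\sum_{i\ge2}b_i^2$, and the crux is to bound the off-leading mass $\sum_{i\ge2}b_i^2=1-b_1^2$.

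This is where the edge count enters. From $2e(G)=\mathbf{1}^tA\mathbf{1}=n\sum_i b_i^2\lambda_i$ together with $e(G)=n^2/4+O(n^\alpha)$ I get $\sum_i b_i^2\lambda_i=n/2+O(n^{\alpha-1})$. Separating the $i=1$ term, substituting $\lambda_1=n/2+O(n^\beta)$ and $b_1^2=1-\sum_{i\ge2}b_i^2$, and bounding $|\sum_{i\ge2}b_i^2\lambda_i|\le O(n^\gamma)\sum_{i\ge2}b_i^2$, I obtain
$$\frac n2\Big(\sum_{i\ge2}b_i^2\Big)\big(1-O(n^{\gamma-1})\big)=O(n^{\alpha-1})+O(n^\beta).$$
Since $\gamma<1$, the factor $1-O(n^{\gamma-1})$ equals $1+o(1)$ and can be absorbed, yielding $\sum_{i\ge2}b_i^2=O(n^{\alpha-2}+n^{\beta-1})$. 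Feeding this back gives $\sum_v(\deg(v)-n/2)^2=O(n^{1+2\beta}+n^{\alpha+1}+n^{\beta+2})=O(n^{\max(\alpha+1,\,\beta+2)})$, where $n^{1+2\beta}$ is dominated by $n^{2+\beta}$ because $\beta<1$.

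Finally I would invoke \cref{prop:expander mixing}(1) with $\alpha'=\max(\alpha+1,\beta+2)$; the hypothesis $\alpha'<3$ needed there follows from $\alpha<2$ and $\beta<1$. This gives property $QR(\theta)$ with $\theta=\max((1+\alpha')/4,\beta,\gamma)=\max((2+\alpha)/4,(3+\beta)/4,\beta,\gamma)$, and since $\beta<1$ forces $\beta\le(3+\beta)/4$, the $\beta$ term is redundant, leaving exactly $\theta=\max((2+\alpha)/4,(3+\beta)/4,\gamma)$ as claimed. The hard part will be the bookkeeping of error terms when isolating $\sum_{i\ge2}b_i^2$ — in particular, verifying that the $O(n^\gamma)\sum_{i\ge2}b_i^2$ contribution gets absorbed into the left-hand side rather than dominating it, which is precisely where the restriction $\gamma<1$ is used.
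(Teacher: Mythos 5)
Your proof is correct, and it follows the same overall reduction as the paper: both arguments convert the hypothesis $e(G)=\frac{n^2}{4}+O(n^\alpha)$ into a degree-variance bound $\sum_v(\deg(v)-\frac n2)^2=O(n^{\alpha+1}+n^{\beta+2})$ and then invoke part (1) of \cref{prop:expander mixing} with $\alpha'=\max(\alpha+1,\beta+2)<3$, absorbing $\beta$ into $(3+\beta)/4$ exactly as you do. The difference is in how that variance bound is derived. The paper's route is shorter and more elementary: it writes $\sum_v\deg(v)^2=\|A\bar v\|^2\le\lambda_1^2\|\bar v\|^2=\frac{n^3}{4}+O(n^{\beta+2})$ for $\bar v=(1,\dots,1)^t$, expands $\sum_v(\deg(v)-\frac n2)^2=\sum_v\deg(v)^2-n\sum_v\deg(v)+\frac{n^3}{4}$, and substitutes $\sum_v\deg(v)=2e(G)$; notably it never uses the $\lambda_2$ hypothesis at this stage. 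Your route instead performs the full spectral decomposition of the all-ones vector, bounds the off-leading mass $\sum_{i\ge2}b_i^2=O(n^{\alpha-2}+n^{\beta-1})$ via $2e(G)=n\sum_i b_i^2\lambda_i$, and needs the $\lambda_2=O(n^\gamma)$ bound plus an absorption argument ($\gamma<1$) to close the estimate. This is heavier bookkeeping for the same intermediate bound, and it partially duplicates work that \cref{prop:expander mixing} already does internally (your control of $\sum_{i\ge2}b_i^2$ is essentially the content of \cref{claim1} there); on the other hand, it makes explicit where the spectral mass of $\bar u$ concentrates, which the paper's one-line Rayleigh-quotient bound leaves implicit.
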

\begin{proof}
Let $G \in \mathcal{G}$ and assume that the graph $G$ has all the assumed conditions.
Let $A=A(G)$ be the adjacency matrix of $G$ and let $\bar{v}=(1,1,\ldots,1)^t$. Since $||A\bar{v}|| \le \lambda_1 ||\bar{v}||$, 
$$
||A\bar{v}||^2=\sum_{v}(\deg(v))^2 \le \lambda_1^2 ||\bar{v}||^2
=\frac{n^{3}}{4} + O(n^{\beta +2}).
$$
It follows that
\begin{align*}
\sum_v \left(\deg(v)-\frac{n}{2}\right)^2 
&= \sum_v (\deg(v))^2 - n\Big(\sum_v \deg(v)\Big) +\frac{n^3}{4}\\
& \le \frac{n^3}{4}+ O(n^{\beta +2}) - n \left(\frac{n^2}{2}+O(n^{\alpha}) \right) +\frac{n^3}{4}  
 = O(n^{\alpha+1}+n^{\beta+2}),
\end{align*}    
and thus the result follows immediately from \cref{prop:expander mixing}.
\end{proof}

\begin{remark} 
There are several different versions of the expander mixing lemma for irregular graphs, and we refer to a recent paper \cite{AZ24} and the references therein; see also \cite{C97, KS06}. Among them, it appears that the version by Byrne and Tait \cite{BT23} (building on the work of Krivelevich and Sudakov \cite{KS06}) is closest to the classical expander mixing lemma for regular graphs. However, their version is not always good enough for our purposes, so we prove a version of the expander mixing lemma for the quasi-random graphs above.
\end{remark}

\subsection{Clique number and independence number of quasi-random graphs}

In this section, we prove \cref{thm: 1.1}, a general lower bound on the clique number and independence number of quasi-random graphs. The proposition is essentially known (For example, see 
\cite[Section 6]{T87a} and \cite[Section 4.2]{Thomason99}), except we use the following new bound on Ramsey number $R(k,\ell)$ by Gupta, Ndiaye, Norin, and Wei \cite{GNNW24} to get a slight improvement. 

\begin{theorem}[\cite{GNNW24}]\label{thm: gnnw}
For all positive integers $\ell \leq k$,
$$
R(k, \ell) \leq \exp\left(\rho(\ell / k) k+o(k)\right)\binom{k+\ell}{\ell},
$$
where $\rho(\lambda)=\left(-0.25 \lambda+0.03 \lambda^2+0.08 \lambda^3\right) e^{-\lambda}$.    
\end{theorem}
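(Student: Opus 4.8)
The plan is to reconstruct this off-diagonal Ramsey upper bound via the \emph{book algorithm} of Campos, Griffiths, Morris, and Sahasrabudhe \cite{CGMS23}, incorporating the parameter optimizations of Gupta, Ndiaye, Norin, and Wei \cite{GNNW24}. Set $\lambda = \ell/k$ and suppose, for contradiction, that some red/blue colouring of the edges of the complete graph on $N = \exp(\rho(\lambda)k + o(k))\binom{k+\ell}{\ell}$ vertices contains neither a red clique on $k$ vertices nor a blue clique on $\ell$ vertices. The entire argument is an amortized analysis of a greedy procedure that maintains a \emph{book}: disjoint vertex sets $A$ (a red clique), $B$ (a blue clique), and a large ``live'' set $X$ of vertices that are red-adjacent to all of $A$ and blue-adjacent to all of $B$, together with the current red density $p$ of the colouring restricted to $X$. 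If $A$ ever reaches size $k$ or $B$ reaches size $\ell$ we are done, so the task is to show that $|X|$ cannot be driven to zero before this happens unless $N$ is smaller than claimed.

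First I would set up the three moves of the algorithm and their effect on the triple $(|A|,|B|,|X|)$ and on $p$. A \textbf{red step} selects a vertex $v \in X$ whose red-degree inside $X$ is at least $p|X|$, appends it to $A$, and replaces $X$ by its red-neighbourhood in $X$; this multiplies $|X|$ by a factor of about $p$ and advances $|A|$ by one. A \textbf{density-boost step} selects a vertex of below-average red-degree, appends it to $B$, and restricts $X$ to its blue-neighbourhood; this multiplies $|X|$ by a factor of about $1-p$ but strictly \emph{increases} $p$, and this gain is the source of all savings over Erd\H{o}s--Szekeres. A \textbf{big-blue step} handles the case in which some vertex has unusually large blue-degree in $X$, passing to a large blue book and making bulk progress on $B$. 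The key structural lemma to establish is that at every stage at least one of these three moves is available.

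Next I would run the amortized analysis. Tracking only $|A|$, $|B|$ and the product of shrinkage factors recovers exactly the Erd\H{o}s--Szekeres weight $\binom{k+\ell}{\ell}$ when $p \equiv 1/2$; the improvement comes from a \emph{potential function} that records the accumulated density gains. The crucial point is that each boost raises $p$, and since $p \le 1$ the total accumulated boost is bounded, so boosts cannot be too frequent; every boost, however, makes subsequent red steps cheaper (factor $p > 1/2$ rather than $1/2$). Quantifying this trade-off --- summing the per-step savings along a trajectory and optimizing over the admissible values of $p$ --- is what produces the factor $\exp(\rho(\lambda)k + o(k))$. The explicit polynomial $\rho(\lambda) = (-0.25\lambda + 0.03\lambda^2 + 0.08\lambda^3)e^{-\lambda}$ emerges from optimizing the free parameters of \cite{GNNW24} (the big-blue threshold, the boost criterion, and the weights in the potential) against the worst-case density trajectory.

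I expect the main obstacle to be precisely this last optimization: controlling the density-boost steps and converting their cumulative effect into the sharp function $\rho$. Bounding both the number and the combined yield of the boosts requires a delicate convexity/entropy estimate on the density trajectory, and extracting the stated cubic-times-exponential expression (rather than a weaker saving) is the technical heart of both \cite{CGMS23} and its refinement \cite{GNNW24}. The remaining ingredients --- the existence of an available move at each step, and the reduction of the final vertex-count inequality to the displayed bound through $\binom{k+\ell}{\ell}$ --- are comparatively routine bookkeeping once the potential function is in place.
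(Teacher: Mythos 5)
Note first that the paper does not prove this statement at all: it is imported verbatim as a black-box citation to \cite{GNNW24}, so the only fair comparison is between your sketch and the actual argument of Gupta--Ndiaye--Norin--Wei (building on \cite{CGMS23}). Measured against that, your proposal correctly names the architecture --- the book algorithm with red steps, density-boost steps, and big-blue steps, plus an amortized analysis recovering the Erd\H{o}s--Szekeres weight $\binom{k+\ell}{\ell}$ with an exponential saving --- but it is a roadmap, not a proof. The decisive content is exactly what you defer: you write that you ``expect the main obstacle to be precisely this last optimization,'' i.e.\ the conversion of the cumulative density gains into the stated bound. Without that step nothing forces the specific function $\rho(\lambda)=(-0.25\lambda+0.03\lambda^2+0.08\lambda^3)e^{-\lambda}$; the coefficients come from explicit parameter choices and a final optimization in \cite{GNNW24} (together with the zigzag-type lemma bounding the total density decrease along the trajectory), and nothing in your sketch pins them down or even shows the saving is exponential in $k$ uniformly over all $\ell\le k$. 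An ``availability of a move'' claim plus ``$p\le 1$ so boosts are bounded'' is not sufficient: boosts of vanishing size can be arbitrarily numerous, and controlling the trade-off between boost size and boost frequency is the heart of the matter, not routine bookkeeping.

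There are also structural inaccuracies that would derail an attempt to execute the sketch as written. In \cite{CGMS23} the tracked density $p$ is not ``the red density of the colouring restricted to $X$'': the algorithm maintains a second reservoir $Y$ (red-complete to $A$) and tracks the red density of the pair $(X,Y)$; this asymmetry is what makes the boost mechanism work, since the density-boost vertex is chosen so that its \emph{blue}-neighbourhood in $X$ has elevated red density toward $Y$ (your ``below-average red-degree'' criterion is not the right selection rule and does not obviously yield a density increase). Likewise, the big-blue step is not merely a degree threshold: it invokes known book-Ramsey estimates to add many vertices to $B$ at once, and its threshold $\mu$ is one of the parameters entering the optimization that produces $\rho$. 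So the verdict is a genuine gap: the proposal reproduces the table of contents of \cite{CGMS23, GNNW24} but proves none of the quantitative lemmas, and in the places where it commits to details, several are wrong. Since the paper itself uses this theorem only as a cited input, the appropriate ``proof'' in context is the citation; a self-contained reproof would require the full zigzag analysis, which your attempt does not supply.
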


For our purpose, we deduce the following corollary of Theorem~\ref{thm: gnnw}.

\begin{corollary}
\label{lem:Ramsey}
Let $G$ be a graph with $n$ vertices. Let $r\leq s$ be positive integers such that $n \geq \exp(\rho(r/s)s) \binom{r+s}{r}$. If $\omega(G)\leq r$, then $\omega(G)+\alpha(G) \geq (1-o(1))(r+s)$.
\end{corollary}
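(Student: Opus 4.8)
The plan is to let $a=\omega(G)$ and exploit that $G$ contains no clique of size $a+1$, so that the definition of the Ramsey number forces a large independent set. Concretely, if $\ell_a$ denotes the largest integer with $R(a+1,\ell_a)\le n$, then $\alpha(G)\ge \ell_a$, whence $\omega(G)+\alpha(G)\ge a+\ell_a$. Since $a=\omega(G)$ may be any value in $\{1,\dots,r\}$, it suffices to prove the uniform bound $a+\ell_a\ge(1-o(1))(r+s)$ for every $0\le a\le r$; equivalently, that $R\big(a+1,(r+s)-a\big)\le n^{1+o(1)}$, because then one may take $\ell_a\ge (r+s)-a-o(s)$ after conceding an $o(s)$ loss to absorb the $n^{o(1)}$ slack. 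Note a single application of Ramsey with fixed parameters cannot work: using $R(r+1,\cdot)$ when $\omega(G)$ is small loses too much, so the argument must track the actual value $a=\omega(G)$ and stay on the antidiagonal $\{(a+1,(r+s)-a):0\le a\le r\}$, whose sum $r+s+1$ is fixed.

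First I would record the reference value. Applying \cref{thm: gnnw} to the endpoint $R(r+1,s)$ (legitimate with $k=s\ge \ell=r+1$ when $r<s$), and using the elementary estimates $\rho(\tfrac{r+1}{s})s=\rho(\tfrac rs)s+O(1)$ and $\binom{r+s+1}{r+1}=\tfrac{r+s+1}{r+1}\binom{r+s}{r}$, shows that the point $a=r$ yields $R(r+1,s)\le \exp\big(\rho(\tfrac rs)s+o(s)\big)\binom{r+s}{r}$, matching the hypothesis $n\ge \exp(\rho(\tfrac rs)s)\binom{r+s}{r}$ up to a factor $e^{o(s)}$. Since $\rho<0$ on $(0,1]$, the saving $\exp(\rho(\tfrac rs)s)<1$ is essential: the classical Erd\H{o}s--Szekeres bound only gives $R(a+1,(r+s)-a)\le\binom{r+s}{r}$, which is weaker than the hypothesis, so \cref{thm: gnnw} must be invoked at \emph{every} point of the antidiagonal and then compared to the endpoint.

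The heart of the argument is thus a monotonicity statement. Writing $F(a):=\rho\!\big(\tfrac{a+1}{(r+s)-a}\big)\big((r+s)-a\big)+\log\binom{r+s+1}{a+1}$ for the logarithm of the \cref{thm: gnnw} bound on $R(a+1,(r+s)-a)$ (with the two arguments ordered by size, using $R(p,q)=R(q,p)$ near the balanced boundary $a\approx r\approx s$), I would show $F(a)\le F(r)+o(s)$ for all $0\le a\le r$. Substituting $a=\mu s$ and $r=\lambda s$ with $\lambda\in(0,1]$ and letting $s\to\infty$, this limits to the one-variable inequality $\phi(\mu)\le\phi(\lambda)$ on $[0,\lambda]$, where, with $H(x)=-x\ln x-(1-x)\ln(1-x)$, one has $\phi(\mu)=\rho\!\big(\tfrac{\mu}{1+\lambda-\mu}\big)(1+\lambda-\mu)+(1+\lambda)H\!\big(\tfrac{\mu}{1+\lambda}\big)$. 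I would establish this by checking $\phi'\ge 0$ on $[0,\lambda]$: the entropy term is increasing there because $\mu\le\lambda\le(1+\lambda)/2$, and the task is to verify that its increase dominates the competing contribution of the $\rho$-term, which is negative and hence pulls toward small $\mu$. I expect this calculus verification -- monotonicity of $\phi$ using the explicit cubic-times-exponential form of $\rho$ -- to be the main obstacle, since it is the only step where the precise shape of $\rho$ enters, and the inequality is tight exactly at the endpoint $\mu=\lambda$, leaving no slack to waste.

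Finally I would assemble the pieces. Given $F(a)\le F(r)+o(s)\le \log n+o(s)$, \cref{thm: gnnw} yields $R(a+1,(r+s)-a)\le n\,e^{o(s)}$; reducing the second argument by $o(s)$ -- cleanly, by fixing $\varepsilon>0$, taking $\ell=\lceil(r+s)-a-\varepsilon(r+s)\rceil$, and letting $\varepsilon\to0$ at the end -- converts this into $R(a+1,\ell)\le n$ with $\ell\ge (r+s)-a-o(s)$. Hence $\alpha(G)\ge\ell$ and $\omega(G)+\alpha(G)\ge a+\ell\ge (r+s)-o(s)=(1-o(1))(r+s)$, uniformly in $a\le r$. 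The only remaining bookkeeping is the off-by-one/ordering issue when $a$ is within $O(1)$ of $r=s$ (so that $a+1>(r+s)-a$), which is handled by the symmetry $R(p,q)=R(q,p)$ and absorbed into the $o(s)$ error.
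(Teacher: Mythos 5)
Your proposal is correct and takes essentially the same route as the paper: both arguments fix the antidiagonal sum $r+s$, set $a=\omega(G)$ (the paper's $r'$) with complementary value $(r+s)-a$, reduce the comparison between the GNNW bound at this point and the hypothesis at $(r,s)$ to monotonicity of the normalized logarithm of the bound, and then apply \cref{thm: gnnw} to extract the large independent set. Your function $\phi(\mu)$ is exactly the paper's one-variable function $f(x)$ up to the reparametrization $x=\mu/(1+\lambda)$ (normalizing by $r+s$ instead of $s$), and the calculus verification you flag as the main obstacle is precisely the step the paper disposes of with the unproved assertion that $f$ is strictly increasing on $(0,0.55)$.
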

\begin{proof}

Consider the function $f(x)$ on $(0,1)$ defined by
$$
f(x)=-x\ln\left(x\right)-\left(1-x\right)\ln\left(1-x\right)+\left(-\frac{0.25x}{1-x}+\frac{0.03x^{2}}{\left(1-x\right)^{2}}+\frac{0.08x^{3}}{\left(1-x\right)^{3}}\right)e^{-\frac{x}{1-x}}\left(1-x\right).
$$
One can check that the function $f$ is strictly increasing on $(0,0.55)$.

Let $r'=\omega(G)$ and let $s'=r+s-r'$. Let $x=r'/(r+s)$ and let $y=r/(r+s)$. Then since $\omega(G)\leq r$ and $r\leq s$, we have $x\leq y\leq \frac{1}{2}$ and thus $f(x)\leq f(y)$. 

By Stirling's approximation, we have
$$
\frac{\ln \left(\exp(\rho(r/s)s) \binom{r+s}{r}\right)}{r+s}=f(y)+o(1), \quad \frac{\ln \left(\exp(\rho(r'/s')s') \binom{r'+s'}{r'}\right)}{r'+s'}=f(x)+o(1).
$$
It follows that
\begin{align*}
n&\geq \exp(\rho(r/s)s) \binom{r+s}{r}=\exp\left((f(y)+o(1)) (r+s)\right)\\
&\geq \exp\left((f(x)+o(1)) (r'+s')\right)=\left(\exp(\rho(r'/s')s') \binom{r'+s'}{r'}\right)^{1+o(1)}\\ & \geq R((1-o(1))s', (1-o(1))r'). 
\end{align*}
The last inequality follows from \cref{thm: gnnw}. Thus, $\alpha(G)\geq (1-o(1))s'$. It follows that $\omega(G)+\alpha(G)\geq (1-o(1))(r+s)$.
\end{proof}

\begin{proposition}\label{thm: 1.1}
Let $\mathcal{G}$ be a family of quasi-random graphs with property $QR(\theta)$ with $\theta \in [1/2,1)$. If $G\in \mathcal{G}$ has $n$ vertices, then we have
$$
\min \{\omega(G),\alpha(G)\} \geq (1-\theta-o(1))\log_2 n,
$$
and
$$
\frac{\omega(G)+\alpha(G)}{2} \geq (1-\theta+\ell-o(1))\log_2 n,
$$
as $n \to \infty$, where $\ell=\ell(\theta) \in (0,1)$ is the unique solution to the equation
\begin{equation}\label{eq:ell}
\ell\log_2 \left( 2+\frac{1-\theta}{\ell} \right) + (\ell+1-\theta)\log_2\left( 1+\frac{\ell}{\ell+1-\theta}\right) +\frac{\ell+1-\theta}{\ln2}\rho\left( \frac{\ell}{\ell+1-\theta}\right)= \theta,    
\end{equation}
where $\rho(\lambda)=\left(-0.25 \lambda+0.03 \lambda^2+0.08 \lambda^3\right) e^{-\lambda}$.
In particular, if $\theta=3/4$, then $\ell \approx 0.3031$ and $\frac{\omega(G)+\alpha(G)}{2} \geq (1-o(1))\log_{3.501} n$; if $\theta=1/2$, then $\ell\approx 0.1436$ and $\frac{\omega(G)+\alpha(G)}{2} \geq (1-o(1))\log_{2.936} n$.
\end{proposition}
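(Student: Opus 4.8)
The plan is to prove the two inequalities separately: the first by a greedy common-neighbourhood argument, and the second by feeding the quasi-random structure into the off-diagonal Ramsey estimate of \cref{lem:Ramsey}.

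For the bound $\min\{\omega,\alpha\}\ge(1-\theta-o(1))\log_2 n$, I would build a clique greedily. Set $W_0=V(G)$, and at step $i$ pick a vertex $v_i\in W_{i-1}$ whose degree inside $W_{i-1}$ is at least the average, then set $W_i=N(v_i)\cap W_{i-1}$. Applying property $QR(\theta)$ with $S=T=W_{i-1}$ gives $e(W_{i-1},W_{i-1})=\tfrac12|W_{i-1}|^2+O(n^\theta|W_{i-1}|)$, so the average degree inside $W_{i-1}$ is $\tfrac12|W_{i-1}|+O(n^\theta)$ and hence $|W_i|\ge\tfrac12|W_{i-1}|-cn^\theta$. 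Rewriting this as $|W_i|+2cn^\theta\ge\tfrac12(|W_{i-1}|+2cn^\theta)$ and iterating yields $|W_i|\ge n/2^i-2cn^\theta$, so the process survives for all $i$ up to $(1-\theta-o(1))\log_2 n$ before the live set drops to the scale $n^\theta$; this produces a clique of that size, giving $\omega(G)\ge(1-\theta-o(1))\log_2 n$. Since $\overline{G}$ is again $QR(\theta)$ — one checks $e_{\overline G}(S,T)=|S||T|-|S\cap T|-e_G(S,T)$ and that the $|S\cap T|$ term is absorbed into the $O(n^\theta\sqrt{|S||T|})$ error — the identical bound holds for $\alpha(G)=\omega(\overline G)$, proving the first inequality.

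For the second bound the idea is to run the greedy clique process only until the common neighbourhood $W$ reaches the Ramsey threshold, and then apply \cref{lem:Ramsey} to the induced subgraph $G[W]$. I would select $r_0\le s_0$ subject to two matching conditions: the prefix length should equal $s_0-r_0=(1-\theta)\log_2 n$, so that after $s_0-r_0$ greedy steps one has $|W|\approx n/2^{(1-\theta)\log_2 n}=n^\theta$; and $\exp(\rho(r_0/s_0)s_0)\binom{r_0+s_0}{r_0}=n^\theta$, so that $|W|$ exactly meets the hypothesis of \cref{lem:Ramsey} for the pair $(r_0,s_0)$. Expanding the binomial coefficient by Stirling's formula and writing $r_0=\ell\log_2 n$, $s_0=(\ell+1-\theta)\log_2 n$ turns the second condition into precisely equation \cref{eq:ell}, which is how $\ell=\ell(\theta)$ is defined. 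Granting $\omega(G[W])\le r_0$, \cref{lem:Ramsey} gives $\omega(G[W])+\alpha(G[W])\ge(1-o(1))(r_0+s_0)$; the clique found in $W$ extends the prefix while the independent set found in $W$ is already independent in $G$, so
\[
\omega(G)+\alpha(G)\ge (s_0-r_0)+\bigl(\omega(G[W])+\alpha(G[W])\bigr)\ge (s_0-r_0)+(1-o(1))(r_0+s_0)=\bigl(1-o(1)\bigr)2s_0,
\]
and $2s_0=2(1-\theta+\ell)\log_2 n$ gives the claim; the stated numerics follow by solving \cref{eq:ell} numerically.

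The hard part will be the hypothesis $\omega(G[W])\le r_0$ in the last step. Quasi-randomness of $G$ only forces $\omega(G[W])\le\omega(G)=O(n^\theta)$, which is far too weak, so one cannot simply invoke it; and if $\omega(G[W])$ lies strictly above $r_0$ the single-scale accounting above degrades. I would handle this by stopping the greedy process at a scale $n^\gamma$ with $\gamma>\theta$ (so that $G[W]$ retains quasi-random slack, with effective exponent $\theta/\gamma<1$) and iterating the reduction whenever the residual clique number is large, each iteration lengthening the clique. Equation \cref{eq:ell} is then to be read as the self-consistent optimum of the trade-off between the length of the clique prefix and the off-diagonal Ramsey parameters $(r_0,s_0)$, the extremal case being exactly $\omega(G[W])=r_0$; verifying that this optimum holds uniformly over the unknown value of $\omega(G[W])$ is the technical heart of the argument. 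Note that the first inequality, already established, is precisely what certifies that a clique prefix of length $(1-\theta)\log_2 n$ exists to begin with.
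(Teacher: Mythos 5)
Your first inequality is proved correctly and by essentially the paper's own argument (greedy common neighbourhoods of above-average-degree vertices, plus complement symmetry for $\alpha$), and your parameter choices in the second part ($r_0=\ell\log_2 n$, $s_0=(\ell+1-\theta)\log_2 n$, the Stirling reduction of the threshold condition $\exp(\rho(r_0/s_0)s_0)\binom{r_0+s_0}{r_0}\le n^\theta$ to \cref{eq:ell}, and the accounting $(s_0-r_0)+(1-o(1))(r_0+s_0)=(2-o(1))s_0$) coincide with the paper's. The gap is exactly where you locate it: the hypothesis $\omega(G[W])\le r_0$ of \cref{lem:Ramsey}. But this hypothesis is not the ``technical heart'' requiring a multi-scale iteration; it is disposed of by a one-line dichotomy that your proposal misses. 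Since the complement of $G$ also has property $QR(\theta)$ and the conclusion is symmetric in $\omega$ and $\alpha$, you may assume without loss of generality that $\omega(G)\le\alpha(G)$. Then either $\omega(G)\ge r_0+m=s_0$, in which case $\tfrac12\bigl(\omega(G)+\alpha(G)\bigr)\ge\omega(G)\ge s_0=(1-\theta+\ell-o(1))\log_2 n$ and you are done with no Ramsey input at all; or $\omega(G)<r_0+m$, in which case any clique of $G[W]$ together with the prefix clique $\{v_1,\dots,v_m\}$ is a clique of $G$, so $\omega(G[W])+m\le\omega(G)<r_0+m$, i.e.\ $\omega(G[W])<r_0$, and \cref{lem:Ramsey} applies. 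This is precisely how the paper proceeds.

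Your proposed repair --- stopping the greedy process at a scale $n^\gamma$ with $\gamma>\theta$ and iterating ``whenever the residual clique number is large'' --- is not carried out and, as written, does not obviously close: you never specify how the iteration terminates or how its gains are aggregated, and you yourself flag the uniformity over the unknown $\omega(G[W])$ as unresolved. Note also that without the WLOG step the failure case really does lose a constant: if $\omega(G[W])>r_0$ you only learn $\omega(G)>s_0$, which together with the first inequality $\alpha(G)\ge(1-\theta-o(1))\log_2 n$ yields merely $\tfrac12\bigl(\omega(G)+\alpha(G)\bigr)\ge(1-\theta+\ell/2-o(1))\log_2 n$, short of the claim. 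So the proposal as it stands establishes the first inequality but not the second; the missing ingredient is the reduction to $\omega(G)\le\alpha(G)$ combined with the case split on whether $\omega(G)\ge s_0$.
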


\begin{proof}
Let $G \in \mathcal{G}$. 
There is a constant $C>0$ (independent of $G$), such that 
\begin{equation}\label{eq: e(S) edge}
\bigg|e(G[S])-\frac{1}{4}|S|^2\bigg| \leq Cn^{\theta}|S|
\end{equation}
holds for all subsets $S$ of $V(G)$. Without loss of generality, we may assume $C \geq 1$.
By inequality \cref{eq: e(S) edge}, 
\begin{equation}\label{eq: edge for G}
e(G)\geq \frac{n^2}{4}-Cn^{\theta+1}.   
\end{equation}
We first choose a vertex $v_{1}$ with the largest degree in $G$. Then inequality \cref{eq: edge for G} implies that
\begin{equation}\label{eq: Nbh1}
|N(v_1)| \ge \frac{2e(G)}{n}\geq \frac{n}{2} -2Cn^\theta.
\end{equation}
Next, we inductively choose the vertex $v_i$ in the induced subgraph $G_i=G[N(v_1) \cap \cdots \cap N(v_{i-1})]$ with the largest degree of $G_i$ for each $i \geq 2$.
Then, the degree of $v_m$ in the graph $G_m$ is $|N(v_1) \cap N(v_2) \cap \cdots \cap N(v_m)|$, which is at least the average degree $2e(G_m)/|V(G_m)|$ of $G_m$. So, inequality \cref{eq: e(S) edge} implies that
\begin{equation}\label{ineq: nhd}
|N(v_1) \cap N(v_2) \cap \cdots \cap N(v_m)| \ge \frac{1}{2}|N(v_1) \cap N(v_2) \cap \cdots \cap N(v_{m-1})| - 2Cn^{\theta}    
\end{equation}
for $m \ge 2$. Then we use inequality \cref{ineq: nhd} inductively, and this combining with inequality \cref{eq: Nbh1} gives
\begin{align}
|N(v_1) \cap N(v_2) \cap \cdots \cap N(v_m)| & \ge \frac{1}{2^{m-1}}|N(v_1)| -2C\Big( 1+\frac{1}{2} + \cdots + \frac{1}{2^{m-2}} \Big) n^{\theta} \notag\\
& \ge \frac{1}{2^{m-1}}\Big( \frac{n}{2} -2Cn^\theta\Big)-2C\Big( 1+\frac{1}{2} + \cdots + \frac{1}{2^{m-2}} \Big) n^{\theta} \notag\\
& \ge\frac{n}{2^m}   - 4C\Big( 1 - \Big(\frac{1}{2} \Big)^{m}\Big)n^{\theta} \label{eq:lbNvm}.
\end{align}
To obtain an lower bound of $\omega(G)$, 
we choose the largest $m=m(n)$ satisfying 
\[
\frac{n}{2^m} - 4C\Big( 1 - \Big(\frac{1}{2} \Big)^{m}\Big)n^{\theta}>0,
\]
equivalently, 
\[\log_2 \left(\frac{n^{1-\theta}}{4C} +1\right)>m.\]
Therefore, we have $\omega(G) \ge (1-\theta-o(1))\log_2 n$.
For the independence number, note that the complement of $G$ also satisfies inequality \cref{eq: e(S) edge}.
Thus, the same argument shows that $\alpha(G) \geq (1-\theta-o(1))\log_2n$, as desired. This proves the first statement.

Next, we prove the second statement. Since the complement of $G$ also satisfies inequality \cref{eq: e(S) edge}, we may assume that $\omega(G) \leq \alpha(G)$ without loss of generality. Choose $m= 
\lfloor\log_2 \Big(\frac{n^{1-\theta}}{5C}+\frac{4}{5}\Big)\rfloor$, so that $m=(1-\theta-o(1))\log_2n$. Then we have 
\begin{equation}\label{eq: 2m}
2^{m} \le \frac{n^{1-\theta}+4C}{5C}.    
\end{equation}
Then, combining inequality~\cref{eq:lbNvm} with inequality \cref{eq: 2m} shows that
\[
|N(v_1) \cap N(v_2) \cap \cdots \cap N(v_m)| 
\ge n^{\theta}\left( \frac{n^{1-\theta}+4C}{2^m}-4C\right) = Cn^{\theta} \geq n^{\theta}.
\]

Let $\ell=\ell(\theta) \in (0,1)$ be the unique solution to equation~\eqref{eq:ell}.
If $\omega(G) \geq \ell\log_2n+m=(1-\theta+\ell-o(1))\log_2 n$, then we have
$$
\frac{\alpha(G)+\omega(G)}{2} \geq \omega(G) \geq (1-\theta+\ell-o(1))\log_2 n
$$
and we are done. From now on, we assume that $\omega(G)< \ell\log_2n+m$. 

We write $G'=G[N(v_1) \cap N(v_2) \cap \cdots \cap N(v_m)]$. We have shown that there are at least $n^{\theta}$ vertices in $G'$. Set $r=\left \lfloor \ell\log_2 n \right \rfloor$ and $s=r+m$. Next, we claim that if $n$ is sufficiently large, then
\begin{equation}\label{eq:lbG'}    
|V(G')|\geq n^\theta \ge  \exp\left(\rho(r / s) s+o(s)\right)\binom{r+s}{s}.
\end{equation}
Indeed, by Stirling's formula, from the equality $ \binom{r+s}{s}= \binom{2r+m}{r+m} =\frac{(2r+m)!}{r!(r+m)!} $, we deduce
\begin{align*}
\binom{r+s}{s} & \sim \frac{(2r+m)^{2r+m+\frac{1}{2}}}{\sqrt{2\pi}r^{r+\frac{1}{2}}(r+m)^{r+m+\frac{1}{2}}} \notag \\ 
 &= \frac{1}{\sqrt{2\pi r}}\left(2+\frac{m}{r} \right)^r \left(1+\frac{r}{r+m}\right)^{r+m+\frac{1}{2}} \notag \\
 &= \frac{1}{\sqrt{2\pi \ell\log_2n}}\left(2+\frac{m}{\ell\log_2n} \right)^{\ell\log_2n} \left( 1+\frac{\ell\log_2n}{\ell\log_2n+m}\right)^{\ell\log_2n+m+\frac{1}{2}}.  
\end{align*}
It follows that if $n$ is sufficiently large, then 
\begin{align*}
&\frac{1}{\log_2 n} \log_2 \left(\exp\left(\rho(r / s) s+o(s)\right)\binom{r+s}{s}\right)\\
&=\ell\log_2 \left( 2+\frac{1-\theta}{\ell} \right) + (\ell+1-\theta)\log_2\left( 1+\frac{\ell}{\ell+1-\theta}\right)+\frac{\ell+1-\theta}{\ln2}\rho\left( \frac{\ell}{\ell+1-\theta}\right)+o(1)\\ &= \theta+o(1).
\end{align*}
Therefore, if $n$ is sufficiently large, then inequality \cref{eq:lbG'} holds, proving the claim. 

Note that all vertices in $G'$ are adjacent to the clique $\{v_1, v_2, \ldots, v_m\}$ in $G$. 
In particular, $\omega(G)\geq m+\omega(G')$. Since 
$\omega(G)< \ell\log_2n+m=r+m$, we have $\omega(G')<r\leq s$. 
From inequality~\eqref{eq:lbG'} and  
\cref{lem:Ramsey}, we conclude that 
$$
\omega(G)+\alpha(G) \geq m+\omega(G')+\alpha(G') \geq m+(1-o(1))(r+s)=(2-o(1))s=2(1-\theta+\ell-o(1))\log_2 n,
$$
as required.
\end{proof}

\section{Quasi-randomness and clique number of $X_{f,q}$} \label{sec: polynomial}
\subsection{Quasi-randomness of $\mathcal{X}_d$}\label{subsec: quasi-random} 
The purpose of this subsection is to show that the family $\mathcal{X}_d$ is a family of quasi-random graphs with property $QR(3/4)$ (\cref{thm: 3/4 intro}). To achieve our purpose, we begin by introducing the celebrated Hoffman-Wielandt inequality for the perturbation of eigenvalues.

\begin{lemma}[Hoffman-Wielandt inequality \cite{HW53}]\label{lem: HW}
Let $A,B \in M_{n,n}(\mathbb{C})$ be normal with the corresponding eigenvalues, $\lambda_{1}(A),\ldots,\lambda_{n}(A)$, and $\lambda_{1}(B),\ldots,\lambda_{n}(B)$. Then
\[\min_{\sigma \in S_{n}}\sum_{i=1}^{n}|\lambda_{i}(A)-\lambda_{\sigma(i)}(B)|^{2} \le ||A-B ||^{2}.\]
Here, $S_{n}$ is the permutation group of $\{1,\ldots, n\}$, and $||A||^{2}=\sum_{1\le i,j \le n}|a_{i,j}|^{2}$ for any $A=(a_{i,j}) \in M_{n,n}(\mathbb{C})$.
\end{lemma}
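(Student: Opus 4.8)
The plan is to combine the spectral theorem for normal matrices with the unitary invariance of the Frobenius norm, and then to recognize the resulting expression as a linear functional evaluated at a doubly stochastic matrix, whose minimum over the Birkhoff polytope is attained at a permutation. First I would use normality to write $A = U D_A U^{*}$ and $B = V D_B V^{*}$, where $U, V$ are unitary and $D_A = \operatorname{diag}(\lambda_1(A),\ldots,\lambda_n(A))$, $D_B = \operatorname{diag}(\lambda_1(B),\ldots,\lambda_n(B))$. Setting $W = U^{*}V$, which is again unitary, conjugating the difference $A-B$ by $U$ and using that the Frobenius norm is unitarily invariant yields
\[
\|A - B\|^{2} = \|D_A - W D_B W^{*}\|^{2}.
\]

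Next I would expand the right-hand side using $\|M\|^{2} = \operatorname{tr}(M M^{*})$. Since $\|W D_B W^{*}\|^{2} = \|D_B\|^{2}$, only the cross term requires care, and a direct computation of the diagonal of $W D_B^{*} W^{*}$ gives
\[
\|A-B\|^{2} = \sum_{i} |\lambda_i(A)|^{2} + \sum_{k} |\lambda_k(B)|^{2} - 2\operatorname{Re}\sum_{i,k} |W_{ik}|^{2}\,\lambda_i(A)\,\overline{\lambda_k(B)}.
\]
The crucial observation is that the quantities $s_{ik} := |W_{ik}|^{2}$ satisfy $\sum_{i} s_{ik} = (W^{*}W)_{kk} = 1$ and $\sum_{k} s_{ik} = (WW^{*})_{ii} = 1$, so $(s_{ik})$ is a doubly stochastic matrix. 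I would then use both the row and column normalizations to rewrite the two pure sums $\sum_i|\lambda_i(A)|^2$ and $\sum_k|\lambda_k(B)|^2$ as $\sum_{i,k} s_{ik}|\lambda_i(A)|^2$ and $\sum_{i,k} s_{ik}|\lambda_k(B)|^2$, which collapses the whole expression into the clean form
\[
\|A - B\|^{2} = \sum_{i,k} s_{ik}\, |\lambda_i(A) - \lambda_k(B)|^{2}.
\]

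Finally I would invoke the Birkhoff--von Neumann theorem: every doubly stochastic matrix is a convex combination of permutation matrices. Writing $d_{ik} = |\lambda_i(A)-\lambda_k(B)|^{2} \ge 0$, the linear functional $(s_{ik}) \mapsto \sum_{i,k} s_{ik} d_{ik}$ attains its minimum over the Birkhoff polytope at a vertex, so its value at our particular $(s_{ik})$ is at least its value at some permutation $\sigma$, giving
\[
\|A - B\|^{2} = \sum_{i,k} s_{ik}\, d_{ik} \;\ge\; \min_{\sigma \in S_n} \sum_{i} d_{i\sigma(i)} \;=\; \min_{\sigma \in S_n} \sum_{i} |\lambda_i(A) - \lambda_{\sigma(i)}(B)|^{2},
\]
which is exactly the claimed inequality. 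I expect the main obstacle to be the bookkeeping in the middle step: correctly isolating the cross term, identifying $(|W_{ik}|^{2})$ as doubly stochastic, and using \emph{both} normalizations to absorb the $|\lambda_i(A)|^{2}$ and $|\lambda_k(B)|^{2}$ terms so that the sum condenses into a single squared-difference. Once the identity $\|A-B\|^{2} = \sum_{i,k} s_{ik}|\lambda_i(A)-\lambda_k(B)|^{2}$ is established, the reduction to a permutation via Birkhoff--von Neumann is immediate.
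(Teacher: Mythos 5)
Your proof is correct. The paper does not prove this lemma at all: it is quoted as a known result with a citation to Hoffman and Wielandt's original 1953 paper, so there is no internal argument to compare against. Your argument is the standard classical proof of the inequality --- spectral decomposition of the two normal matrices, unitary invariance of the Frobenius norm to reduce to $\|D_A - W D_B W^{*}\|^{2}$, the identity $\|A-B\|^{2} = \sum_{i,k}|W_{ik}|^{2}\,|\lambda_i(A)-\lambda_k(B)|^{2}$ via double stochasticity of $(|W_{ik}|^{2})$, and Birkhoff--von Neumann to pass from the doubly stochastic matrix to a minimizing permutation --- and every step, including the absorption of the two pure sums using both the row and column normalizations, is carried out correctly.
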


To apply \cref{lem: HW}, we need to estimate the entries of the square of the adjacency matrix of the graph $X_{f,q}$.

\begin{proposition}\label{prop:A^2}
Let $f\in \F_q[x,y]$ be an admissible polynomial of degree $d$. Let $A$ be the adjacency matrix of the graph $X_{f,q}$. Then all but $O_d(1)$ of diagonal entries of $A^2$ are $q/2+O_d(\sqrt{q})$, and all but $O_d(q)$ of off-diagonal entries of $A^2$ are $q/4+O_d(\sqrt{q})$.     
\end{proposition}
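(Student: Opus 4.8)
The plan is to compute the entries of $A^2$ directly in terms of quadratic character sums and then apply Weil's bound together with the counting estimates from the preliminary section. Recall that $A_{ab} = \frac{1}{2}(1+\chi(f(a,b)))$ up to corrections coming from the vertices where $f(a,b)=0$ or where $a=b$, where $\chi$ is the quadratic character. I would first write the diagonal entry $(A^2)_{aa} = \sum_{b} A_{ab}^2 = \deg(a)$, which counts the neighbors of $a$; this is essentially $\frac{1}{2}\sum_{b}(1+\chi(f(a,b)))$ plus $O_d(1)$ terms accounting for zeros of $f(a,\cdot)$ and the diagonal. By Weil's bound (\cref{Weil}), provided $f(a,y)$ is not a constant multiple of a square of a polynomial, the character sum $\sum_{b}\chi(f(a,b))$ is $O_d(\sqrt{q})$, giving $\deg(a) = q/2 + O_d(\sqrt{q})$. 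By \cref{cor:badu}, there are only $O_d(1)$ values of $a$ for which $f(a,y)$ \emph{is} such an exceptional square, so all but $O_d(1)$ diagonal entries have the claimed form.

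For the off-diagonal entries, I would expand
\[
(A^2)_{ab} = \sum_{c} A_{ac}A_{cb} = \#\{c : c \sim a \text{ and } c \sim b\},
\]
the number of common neighbors of $a$ and $b$. Writing each adjacency as $\frac{1}{2}(1+\chi(f(a,c)))$ and $\frac{1}{2}(1+\chi(f(c,b)))$, the main term expands into four pieces; the dominant contribution $\frac{1}{4}\sum_c 1 = q/4$ comes from the constant-times-constant term, and the mixed linear terms $\frac{1}{4}\sum_c \chi(f(a,c))$ and $\frac{1}{4}\sum_c \chi(f(c,b))$ are each $O_d(\sqrt{q})$ by Weil's bound (away from the exceptional $a,b$). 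The crucial term is $\frac{1}{4}\sum_c \chi(f(a,c)f(c,b))$. By \cref{prop:charsum} or more directly by Weil's bound applied to the polynomial $g(x) = f(a,x)f(x,b)$ in the variable $x$, this sum is $O_d(\sqrt{q})$ \emph{provided} $f(a,x)f(x,b)$ is not a constant multiple of a square of a polynomial in $x$.

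The main obstacle, and the step requiring the most care, is controlling exactly how many pairs $(a,b)$ are \emph{exceptional} in the sense that $f(a,x)f(x,b)$ \emph{is} a constant multiple of a square, since for those pairs Weil's bound does not apply and the estimate $(A^2)_{ab} = q/4 + O_d(\sqrt{q})$ may fail. This is precisely what \cref{cor:badpairs} is designed to bound: taking $C = \F_q$, the number of such pairs $(a,b) \in \F_q \times \F_q$ is at most $(d^2+d)^2 + (d^2+2d)q = O_d(q)$. I would also fold in the $O_d(1)$ corrections from the zero-set of $f$ and from the diagonal, as well as the $O_d(q)$ pairs involving an exceptional first or second coordinate (from \cref{cor:badu}); all of these are absorbed into the $O_d(q)$ exceptional off-diagonal entries. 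Thus for all but $O_d(q)$ off-diagonal pairs, the four expanded pieces combine to give $q/4 + O_d(\sqrt{q})$, completing the estimate. The only genuinely nontrivial input is the algebraic-geometric counting of \cref{cor:badpairs}; everything else is a routine assembly of Weil's bound with bookkeeping of the boundedly many degenerate contributions.
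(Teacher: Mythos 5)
Your proposal follows essentially the same route as the paper: expand the entries of $A^2$ as quadratic character sums, apply Weil's bound (\cref{Weil}) termwise, and control the exceptional pairs via \cref{cor:badu} and \cref{cor:badpairs}. One detail needs adjusting, however: you write the adjacency indicators as $\chi(f(a,c))$ and $\chi(f(c,b))$, so your crucial product polynomial is $f(a,x)f(x,b)$, with the summation variable in the \emph{second} slot of the first factor; but \cref{cor:badpairs} is stated for products $f(x,u)f(x,v)$ with the variable in the first slot of both factors, so it does not literally bound your exceptional set. The fix is immediate: since $X_{f,q}$ is undirected, $c\sim a$ if and only if $f(c,a)$ is a square, so you may (as the paper does) write both indicators with $c$ in the first slot, making the crucial term $\sum_{c}\chi\bigl(f(c,a)f(c,b)\bigr)$, to which \cref{cor:badpairs} (with $C=\F_q$) and Weil's bound apply verbatim; the same remark applies to your diagonal computation, where \cref{cor:badu} concerns $f(x,u)$ rather than $f(a,y)$. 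Finally, \cref{prop:charsum} is not the right reference for the inner complete sum (it bounds incomplete double sums over $C\times C$); your alternative, Weil's bound applied directly to the one-variable product polynomial, is the correct tool and is exactly what the paper uses.
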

\begin{proof}
We first consider the diagonal entries of $A^2$, which correspond to the degree of vertices in $X_{f,q}$. By \cref{cor:badu}, the number of $u$ such that $f(x,u)$ is a constant multiple of a square of a polynomial is $O(d^2)$. On the other hand, if $f(x,u)$ is not a constant multiple of a square of a polynomial, then by \cref{Weil}, the degree of $u$ is 
\[\frac{1}{2}\sum_{x \in \F_q } \Big( \chi(f(x,u)) + \delta_{0,f(x,u)} + 1\Big) = \frac{q}{2} + O_d(\sqrt{q}),\]
as required. 

Next, we consider off-diagonal entries of $A^2$. \cref{cor:badu} and \cref{cor:badpairs} imply that there are at most $O(d^4+d^2q)=O_d(q)$ pairs $(u,v)$ such that at least one of $f(x,u), f(x,v)$, and $f(x,u)f(x,v)$ is a constant multiple of a square of a polynomial. Thus, it suffices to show if neither $f(x,u), f(x,v)$, nor $f(x,u)f(x,v)$ is a constant multiple of a square of a polynomial, then the entry $(u,v)$ of $A^2$ is $q/4+O_d(\sqrt{q})$. Let $(u,v)$ be such a pair that neither $f(x,u), f(x,v)$, nor $f(x,u)f(x,v)$ is a constant multiple of a square of a polynomial. Let $\chi$ be the quadratic character of $\F_q$, and let $\eta(x)=\chi(x)+\delta_{0,x}$ so that $\eta(x)=1$ when $x$ is a square in $\F_q$ (including $0$), and $\eta(x)=-1$ when $x$ is a non-square in $\F_q$. Then the entry $(u,v)$ of $A^{2}$ can be written as 
\begin{align}
\label{char-cal}
&\frac{1}{4}\sum_{x \in \mathbb{F}_{q}} (\eta(f(x,u))+1)(\eta(f(x,v))+1) \notag\\
&=\frac{1}{4}\sum_{x \in \mathbb{F}_{q}} (\chi(f(x,u))+1)(\chi(f(x,v))+1)+\frac{1}{4}\sum_{x \in \mathbb{F}_{q}} \delta_{0,f(x,u)}(\chi(f(x,v))+1) \notag\\
&+\frac{1}{4}\sum_{x \in \mathbb{F}_{q}} \delta_{0,f(x,v)}(\chi(f(x,u))+1)+\frac{1}{4}\sum_{x \in \mathbb{F}_{q}} \delta_{0,f(x,u)}\cdot\delta_{0,f(x,v)}.
\end{align}
We note that
\[\frac{1}{4}\sum_{x \in \mathbb{F}_{q}} \delta_{0,f(x,u)}\cdot\delta_{0,f(x,v)}=\frac{1}{4}\#\{x\in \F_{q}~|~f(x,u)=f(x,v)=0\}=O_d(1),\]
\begin{align*}  
\frac{1}{4}\sum_{x \in \mathbb{F}_{q}} \delta_{0,f(x,u)}(\chi(f(x,v))+1) =\frac{1}{4}\sum_{\substack{ x \in \mathbb{F}_{q}\\ f(x,u)=0}}(\chi(f(x,v))+1) 
& = O_d(1).
\end{align*}
It follows that the off-diagonal entry $(u,v)$ of $A^{2}$ is
\begin{equation*}
\frac{q}{4}+\frac{1}{4}\sum_{x \in \mathbb{F}_{q}}\chi(f(x,u))+\frac{1}{4}\sum_{x \in \mathbb{F}_{q}}\chi(f(x,v))+\frac{1}{4}\sum_{x \in \mathbb{F}_{q}}\chi(f(x,u)f(x,v))+ O_d(1).
\end{equation*}
Now Weil's bound implies the entry $(u,v)$ is $q/4+O_d(\sqrt{q})$. 
\end{proof}

We now have all the ingredients to prove \cref{thm: 3/4 intro}.

\begin{proof}[Proof of \textup{\cref{thm: 3/4 intro}}]
Let $X_{f,q} \in \mathcal{X}_d$. \cref{prop:A^2} implies that all but at most $O_d(1)$ of vertices have degree $q/2+O_d(\sqrt{q})$. In particular, we have
\begin{equation}\label{eq: deg1}
    \sum_v \Big(\deg(v)-\frac{q}{2}\Big)^2 = O_d(q^2)
\end{equation}
and 
\begin{equation}\label{eq: edge 3/4}
     e(X_{f,q})=\frac{1}{2}\sum_{u \in \F_q} \deg (u)=\frac{q^2}{4}+O_d(q^{3/2}).
\end{equation}

Let $A$ be the adjacency matrix of the graph $X_{f,q}$.  We approximate $A^2$ by $B=\frac{q}{2}I+\frac{q}{4}(J-I)=\frac{q}{4}(I+J)$, where $I$ is the identity matrix, and $J$ is the all one matrix (both $I$ and $J$ are $q \times q$ matrices). Note that the eigenvalues of $\frac{q}{4}(I+J)$ are $\frac{q}{4}$ (with multiplicity $q-1$) and $\frac{(q+1)q}{4}$ (with multiplicity $1$). By \cref{prop:A^2}, all but at most $O_d(q)$ entries of $A^2-B$ are bounded by $O_d(\sqrt{q})$. Also, note that all entries of $A^2-B$ are trivially bounded by $q$. Let $\lambda_i=\lambda_i(X_{f,q})$. By Hoffman-Wielandt inequality, there exists $1 \leq i \leq q$, such that
$$
\left(\lambda_i^2-\frac{(q+1)q}{4}\right)^2+ \sum_{1 \leq j \leq q, j \neq i} \left(\lambda_j^2-\frac{q}{4}\right)^2 \leq ||A^2-B||^2\leq q^2 O_d(q)+ O_d(q)  q^2=O_d(q^3).
$$
We claim that $i=1$. Indeed, since $\lambda_1$ is at least the average degree of the graph $X_{f,q}$, equation \cref{eq: edge 3/4} implies $\lambda_1\geq \frac{q}{2}+O_d(q^{1/2})$. If $i \neq 1$, then we would have
$(\lambda_1^2-\frac{q}{4})^2=O_d(q^3)$, which is impossible.  
Therefore, we have
$$
\left(\lambda_1^2-\frac{(q+1)q}{4}\right)^2+\left(\lambda_2^2-\frac{q}{4}\right)^2
\leq \left(\lambda_1^2-\frac{(q+1)q}{4}\right)^2+ \sum_{j=2}^q \left(\lambda_j^2-\frac{q}{4}\right)^2
\leq O_d(q^3).
$$
It follows that $\lambda_1=\frac{q}{2}+O_d(q^{1/2})$ and $\lambda_2=O_d(q^{3/4})$. 
This, together with equation \cref{eq: deg1}, satisfies the assumptions in \cref{prop:expander mixing}.
Therefore, we conclude that $\mathcal{X}_d$ is a family of quasi-random graphs with property $QR(3/4)$. 
\end{proof}

\begin{remark}
Note that Paley graphs are only defined over $\F_q$ with $q \equiv 1 \pmod 4$. Thomason \cite{T16} constructed a (undirected) Paley-like graph $G_q$ for a finite field $\F_q$ with $q=4^n$: the vertex set of $G_q$ is $PG(1, q)$, and the edge set of $G_q$ is defined via the trace map $\Tr_{\F_q/\F_2}$. He showed that the graph $G_q$ is a self-complementary Cayley graph, $G_q$ is $q/2$-regular, and the co-degree of each pair is $q/4+O(\sqrt{q})$. Using these properties, he deduced that $G_q$ is $(1/2, q^{3/4})$-jumbled. 
By slightly modifying the proof of \cref{thm: 3/4 intro}, we can show $\mathcal{G}=\{G_q: q=4^n\}$ is a family of quasi-random graphs with property $QR(3/4)$. 
Thus, it follows from \cref{thm: 1.1} that $\omega(G_q) \geq (1-o(1))\log_{3.501} q$, as $q=4^n \to \infty$. 
\end{remark}

\subsection{Quasi-randomness of $\mathcal{H}_d$}
In this subsection, we show that $\mathcal{H}_d$ is a family of quasi-random graphs with property $QR(1/2)$ (\cref{thm: construction intro}) and prove \cref{thm: lower bound of homo}. We begin with the following lemma.

\begin{lemma}\label{lem:fcomposeg}
Let $\mathcal{G}$ be a family of graphs such that each graph in $\mathcal{G}$ is of the form $X_{f,q}$, and let $d \geq 1$. Let $\mathcal{F}_d$ be the family consisting of all graphs of the form $X_{h,q}$, where $g \in \F_q[x]$ is a polynomial with degree $d$, $h(x,y)=f(g(x),g(y))$, and $X_{f,q} \in \mathcal{G}$. If $\mathcal{G}$ is a family of quasi-random graphs with property $QR(1/2)$, then so is $\mathcal{F}_d$.
\end{lemma}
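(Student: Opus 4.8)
The plan is to exploit the fact that $X_{h,q}$ is, up to a controlled error coming from the fibers of $g$, a ``blow-up'' of $X_{f,q}$. The key structural observation is that whenever $g(a)\neq g(b)$, the vertices $a,b$ are adjacent in $X_{h,q}$ exactly when $g(a),g(b)$ are adjacent in $X_{f,q}$, since $h(a,b)=f(g(a),g(b))$; I will write $\sim_f$ for adjacency in $X_{f,q}$. Because $g$ has degree $d$, each fiber $g^{-1}(c)$ has at most $d$ elements. Thus for $S,T\subseteq \F_q$ and $c\in\F_q$ I set $s_c=\#\{a\in S:g(a)=c\}$ and $t_c=\#\{b\in T:g(b)=c\}$, so that $\sum_c s_c=|S|$, $\sum_c t_c=|T|$, and crucially $0\le s_c,t_c\le d$.

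Next I would split $e_{X_{h,q}}(S,T)$ according to whether the two endpoints lie in the same fiber of $g$. The within-fiber (``diagonal'') contribution is bounded by $\sum_c s_c t_c\le d\min(|S|,|T|)\le d\sqrt{|S||T|}$, using $s_c\le d$; this is harmless for $QR(1/2)$. The between-fiber contribution is exactly the weighted edge count $\sum_{c\sim_f c'}s_c\,t_{c'}$, where the sum runs over ordered pairs, matching the paper's convention for $e(\cdot,\cdot)$.

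The crux is to turn this weighted count into honestly unweighted edge counts in $X_{f,q}$, so that the hypothesis $QR(1/2)$ for $\mathcal{G}$ can be invoked. I would do this by slicing: for $1\le j\le d$ put $S_j=\{c:s_c\ge j\}$ and $T_k=\{c:t_c\ge k\}$, so that $s_c=\sum_{j=1}^{d}\mathbf{1}[c\in S_j]$ and likewise for $t_c$. Expanding the product then yields
$$\sum_{c\sim_f c'}s_c\,t_{c'}=\sum_{j=1}^{d}\sum_{k=1}^{d} e_{X_{f,q}}(S_j,T_k).$$
Applying the $QR(1/2)$ bound of $\mathcal{G}$ (with $n=q$) to each of these $\le d^2$ terms, and using $\sum_j|S_j|=|S|$, $\sum_k|T_k|=|T|$, the main terms sum to $\tfrac12|S||T|$ precisely. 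For the error, Cauchy--Schwarz gives $\sum_{j=1}^{d}\sqrt{|S_j|}\le\sqrt{d|S|}$ (and the analogue for $T$), so the total error is $O_d\!\big(q^{1/2}\sqrt{|S||T|}\big)$. Combining with the diagonal bound gives $e_{X_{h,q}}(S,T)=\tfrac12|S||T|+O_d\!\big(q^{1/2}\sqrt{|S||T|}\big)$, which is exactly $QR(1/2)$ for $\mathcal{F}_d$. Since $d$ is fixed, the implied constant (at most $d$ times the $QR(1/2)$ constant of $\mathcal{G}$, plus $d$) is uniform across the whole family, and $\mathcal{F}_d$ inherits unboundedly many vertices from $\mathcal{G}$.

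The main obstacle, and the only genuinely non-routine point, is that the blow-up has \emph{non-uniform} fiber sizes, so the between-fiber count is a weighted rather than unweighted edge count. The slicing identity together with the Cauchy--Schwarz estimate on $\sum_j\sqrt{|S_j|}$ is precisely what prevents the error from degrading: a cruder bound (e.g.\ absorbing the weights as a factor of $d$ on $|S|$) would spoil the $\sqrt{|S||T|}$ normalization, whereas the slicing keeps the error at the optimal order $q^{1/2}\sqrt{|S||T|}$.
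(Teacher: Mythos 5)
Your proof is correct and is essentially the same as the paper's: the paper partitions the domain $\F_q$ into $d$ sets $V_1,\dots,V_d$ on which $g$ is injective and applies $QR(1/2)$ to the $d^2$ pairs $(g(S\cap V_i),g(T\cap V_j))$, which is exactly the domain-side counterpart of your image-side level-set slicing $S_j=\{c:s_c\ge j\}$. Both arguments use the same adjacency-transfer observation, the same $d\sqrt{|S||T|}$ bound for same-fiber pairs, and the same Cauchy--Schwarz step to keep the error at $O_d(q^{1/2}\sqrt{|S||T|})$.
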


\begin{proof}
Let $X_{f,q} \in \mathcal{G}$. Let $g \in \F_q[x]$ be a polynomial with degree $d$ and let $h(x,y)=f(g(x),g(y))$ so that $X_{h,q} \in \mathcal{F}_d$. We can partition $\F_q$ into the union of $d$ disjoint subsets $V_1,V_2, \ldots, V_d$, such that $g$ is injective on $V_i$ for each $1 \leq i \leq d$. Now, given two subsets $S, T$ of $\F_q$, set $S_i=S \cap V_i$ and $T_i=T \cap V_i$. Note that if $g(u) \neq g(v)$, then the two vertices $u,v$ are adjacent in $X_{h,q}$ if and only if the two vertices $g(u),g(v)$ are adjacent in $X_{f,q}$. Note that the number of pairs of vertices $u\in S$ and $v \in T$ such that $u,v$ are adjacent in $X_{h,q}$ and $g(u)=g(v)$ is at most $d\min \{|S|, |T|\} \leq d\sqrt{|S||T|}$. Based on the above observations and the assumption that $\mathcal{G}$ is a family of quasi-random graphs with property $QR(1/2)$, in the graph $X_{h,q}$, we have 
\begin{align*}
e(S,T)
&=\sum_{i,j=1}^{d} e(S_i, T_j)= O(d\sqrt{|S||T|})+\sum_{i,j=1}^{d} e_{X_{f,q}}(g(S_i), g(T_j))\\
&=O(d\sqrt{|S||T|})+\sum_{i,j=1}^{d} \bigg(\frac{|S_i||T_j|}{2}+O(\sqrt{q|S_i||T_j|})\bigg)
=\frac{|S||T|}{2}+ O_d(\sqrt{q}\sqrt{|S||T|}).
\end{align*}
This proves that $\mathcal{F}_d$ is a family of quasi-random graphs with property $QR(1/2)$, as required.
\end{proof}

Next, under some additional assumptions, we modify the proof of \cref{thm: 3/4 intro} to show a stronger quasi-randomness of $X_{f,q}$.

\begin{proposition}\label{prop: 1/2}
Let $d \geq 1$ and let $\mathcal{G}_d$ be an infinite subfamily of $\mathcal{X}_d$. Assume that for each $X_{f,q} \in \mathcal{G}_d$, there is a subset $B$ of $\F_q$ with $|B|=O(1)$, such that the entry $(u,v)$ of $A(X_{f,q})^2$ are $q/4+O(1)$ for all $u,v \in \F_q \setminus B$ with $u \neq v$. Then $\mathcal{G}_d$ is a family of quasi-random graphs with property $QR(1/2)$.
\end{proposition}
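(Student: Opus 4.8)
The plan is to mirror the proof of \cref{thm: 3/4 intro}, the decisive difference being that the hypothesis on the off-diagonal entries of $A^2$ is strong enough to lower the Hoffman--Wielandt error from $O_d(q^3)$ down to $O_d(q^2)$ --- but only after the $O(1)$ exceptional vertices in $B$ are excised. Concretely, fix $X_{f,q}\in\mathcal{G}_d$ with adjacency matrix $A$ and bad set $B$, and let $X'=X_{f,q}-B$, with adjacency matrix $A'$ on $n'=q-|B|=q-O(1)$ vertices. For $u,v\notin B$ with $u\neq v$ we have $((A')^2)_{uv}=(A^2)_{uv}-\sum_{w\in B}A_{uw}A_{wv}=q/4+O(1)$, so that \emph{every} off-diagonal entry of $(A')^2$ is $q/4+O(1)$; meanwhile the diagonal entries of $(A')^2$ are the degrees in $X'$, which by \cref{prop:A^2} equal $q/2+O_d(\sqrt q)$ for all but $O_d(1)$ vertices (deleting $B$ alters each degree by $O(1)$).

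Next I would approximate $(A')^2$ by $\frac{q}{4}(I+J)$ (now of size $n'\times n'$) exactly as in \cref{thm: 3/4 intro} and apply the Hoffman--Wielandt inequality (\cref{lem: HW}). The improvement lies entirely in the Frobenius norm: the off-diagonal entries of $(A')^2-\frac{q}{4}(I+J)$ are all $O(1)$, contributing $O(q^2)$, while the diagonal contributes $O_d(q^2)$ (with $O_d(1)$ entries of size $O(q)$ and the remaining entries of size $O_d(\sqrt q)$), so $\|(A')^2-\frac{q}{4}(I+J)\|^2=O_d(q^2)$ rather than $O_d(q^3)$. Since $\lambda_1(A')\ge 2e(X')/n'=q/2+O_d(\sqrt q)$, the top eigenvalue of $\frac{q}{4}(I+J)$, namely $\frac{q}{4}(n'+1)=\frac{q^2}{4}+O_d(q)$, must be the one matched to $\lambda_1(A')^2$ (otherwise $\lambda_1(A')^2=q/4+O_d(q)$, which is impossible as $\lambda_1(A')\gg\sqrt q$). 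This yields $\lambda_1(A')=q/2+O_d(\sqrt q)$, and from the remaining terms $\lambda_2(A')^2=q/4+O_d(q)$, that is, $\lambda_2(A')=O_d(\sqrt q)$.

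Now $X'$ satisfies the hypotheses of part (2) of \cref{prop:expander mixing} with $n=n'$, $\eta=1/2$ (all but $O_d(1)$ degrees are $n'/2+O_d(\sqrt{n'})$), $\beta=1/2$, and $\gamma=1/2$, so the family $\{X_{f,q}-B:X_{f,q}\in\mathcal{G}_d\}$ has property $QR(1/2)$. Finally I would transfer this back to $\mathcal{G}_d$ by a direct edge count: for $S,T\subseteq\F_q$, writing $S'=S\setminus B$ and $T'=T\setminus B$, one has $e_{X_{f,q}}(S,T)=e_{X'}(S',T')+(\text{edges meeting }B)$, where the number of edges between $S$ and $T$ meeting $B$ is at most $|B|(|S|+|T|)$. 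Combining the elementary bound $|S|+|T|\le 2\sqrt q\,\sqrt{|S||T|}$ (valid since $1\le|S|,|T|\le q$) with $|S'||T'|=|S||T|+O_d(\sqrt q\,\sqrt{|S||T|})$, the $QR(1/2)$ estimate for $X'$ propagates to $e_{X_{f,q}}(S,T)=\tfrac12|S||T|+O_d(\sqrt q\,\sqrt{|S||T|})$, with a constant uniform over the family.

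I expect the main obstacle to be conceptual rather than computational: one cannot feed $X_{f,q}$ itself into the eigenvalue machinery, because although $|B|=O(1)$, the rows and columns of $A^2$ indexed by $B$ together comprise $O(q)$ entries that the hypothesis leaves uncontrolled, and bounding them only trivially by $q$ reinflates $\|A^2-\frac{q}{4}(I+J)\|^2$ back to $O_d(q^3)$, recovering merely $QR(3/4)$. The key idea is therefore to delete $B$ \emph{before} estimating the eigenvalues --- so that every surviving off-diagonal entry is $q/4+O(1)$ --- and to reinstate $B$ only at the level of the combinatorial edge count, where $O(1)$ vertices can perturb $e(S,T)$ by no more than the admissible $O_d(\sqrt q\,\sqrt{|S||T|})$.
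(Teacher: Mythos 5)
Your proof is correct and follows essentially the same route as the paper's: excise the $O(1)$ exceptional vertices, apply Hoffman--Wielandt to the induced subgraph (where the controlled off-diagonal entries bring the Frobenius error down to $O_d(q^2)$, yielding $\lambda_1=q/2+O_d(\sqrt q)$ and $\lambda_2=O_d(\sqrt q)$), invoke part (2) of \cref{prop:expander mixing} with $\eta=\beta=\gamma=1/2$, and transfer the $QR(1/2)$ estimate back to the full graphs. The only cosmetic differences are that the paper also deletes the $O_d(1)$ bad-degree vertices before the spectral step (you instead absorb them into the Frobenius bound and into the $O(1)$ exceptions that part (2) tolerates), and that you spell out the final transfer-back edge count, which the paper leaves as a routine verification.
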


\begin{proof}
Let $X_{f,q} \in \mathcal{G}_d$. By assumption, there exists a subset $B$ of $\F_q$ with $|B|=O(1)$, such that the entry $(u,v)$ of $A(X_{f,q})^2$ are $q/4+O(1)$ for all $u,v \in \F_q \setminus B$  with $u \neq v$. Let $B' \subset \F_q$ be the subset of vertices of $X_{f,q}$ with degree not equal to $q/2+O_d(\sqrt{q})$. By \cref{prop:A^2}, we have $|B'|=O_d(1)$. Let $V'=\F_q \setminus B \setminus B'$ and let $\widetilde{q}=|V'|$; then $\widetilde{q}=q-O_d(1)$. 
Let $\widetilde{X}=\widetilde{X}_{f,q}$ be the subgraph of $X_{f,q}$ induced by $V'$. Let $\widetilde{A}$ be the adjacency matrix of $\widetilde{X}$. Then we have 
\begin{itemize}
    \item among the diagonal entries of $\widetilde{A}^2$, all of them are $\widetilde{q}/2+O_d(\sqrt{\widetilde{q}})$;
    \item among the off-diagonal entries of $\widetilde{A}^2$, all of them are $\widetilde{q}/4+O_d(1)$.
\end{itemize}

Next, we apply a similar argument as in the proof of \cref{thm: 3/4 intro} to estimate $\widetilde{\lambda_1}=\widetilde{\lambda_1}(\widetilde{X})$ and $\widetilde{\lambda_2}=\widetilde{\lambda_2}(\widetilde{X})$. Note that we have $\sum_{v\in \widetilde{X}} \Big(\deg (v)-\frac{\widetilde{q}}{2}\Big)^2 = O_d(\widetilde{q}^2)$ and $e(\widetilde{X})=\frac{\widetilde{q}^2}{4}+O_d(\widetilde{q}^{3/2})$. It follows that $\widetilde{\lambda_1} \geq \frac{\widetilde{q}}{2}+O_d(\sqrt{\widetilde{q}})$. Let us approximate $\widetilde{A}^2$ by $\widetilde{B}=\frac{\widetilde{q}}{2}I+\frac{\widetilde{q}}{4}(J-I)=\frac{\widetilde{q}}{4}(I+J)$, where $I$ is the identity matrix, and $J$ is the all one matrix (both $I$ and $J$ are $\widetilde{q} \times \widetilde{q}$ matrices). Note that the eigenvalues of $\frac{\widetilde{q}}{4}(I+J)$ are $\frac{\widetilde{q}}{4}$ (with multiplicity $\widetilde{q}-1$) and $\frac{(\widetilde{q}+1)\widetilde{q}}{4}$ (with multiplicity $1$). The assumptions on the entries of $\widetilde{A}^2$ guarantee that $||\widetilde{A}^2-\widetilde{B}||^2=O_d(\widetilde{q}^2)$. Using Hoffman-Wielandt inequality, we deduce that 
$$
\left(\widetilde{\lambda_1}^2-\frac{(\widetilde{q}+1)\widetilde{q}}{4}\right)^2+\left(\widetilde{\lambda_2}^2-\frac{\widetilde{q}}{4}\right)^2=O_d(\widetilde{q}^2)
$$
and thus $\widetilde{\lambda_1}=\frac{\widetilde{q}}{2}+O_d(1)$, $\widetilde{\lambda_2}=O_d(\sqrt{\widetilde{q}})$. It then follows from \cref{prop:expander mixing} that $\widetilde{\mathcal{G}}_d:=\{\widetilde{X}_{f,q}: X_{f,q}\in \mathcal{G}_d\}$ is a family of quasi-random graphs with property $QR(1/2)$.

Now we go back to the original family $\mathcal{G}_d$. Note that for each $X_{f,q} \in \mathcal{G}_d$, the graph $\widetilde{X}_{f,q}$ is obtained from $X_{f,q}$ by deleting $O_d(1)$ vertices. It is straightforward to verify that $\mathcal{G}_d$ remains a family of quasi-random graphs with property $QR(1/2)$.
\end{proof}

As a corollary, the admissible polynomials of bidegree $(1,1)$ induce a family of quasi-random graphs with property $QR(1/2)$.

\begin{corollary}\label{cor:deg1}
Let $\mathcal{X}_{1,1}$ be the family consisting of all graphs $X_{f,q}$ and $X_{xyf,q}$, where $f\in\mathbb{F}_q[x,y]$ is an admissible polynomial with bidegree $(1,1)$. Then $\mathcal{X}_{1,1}$ is a family of quasi-random graphs with property $QR(1/2)$. 
\end{corollary}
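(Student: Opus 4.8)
The plan is to verify, for each of the two types of polynomials appearing in $\mathcal{X}_{1,1}$, the hypothesis of \cref{prop: 1/2}: namely that apart from an $O(1)$-sized set $B$ of exceptional vertices, every off-diagonal entry of the squared adjacency matrix equals $q/4+O(1)$ rather than merely $q/4+O(\sqrt q)$ as guaranteed by \cref{prop:A^2}. Write $f(x,y)=axy+bx+cy+d$. Since $f$ is admissible, it cannot factor as $F(x)G(y)$: were it to factor so, its primitive kernel would be a constant, hence a constant multiple of a square, contradicting admissibility. A short computation shows that $f$ factors in this way exactly when $ad-bc=0$, so admissibility forces $ad-bc\neq0$, and this inequality is the engine of the whole argument. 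I would first treat the subfamily $\{X_{f,q}\}\subseteq\mathcal{X}_2$ and then the subfamily $\{X_{xyf,q}\}$; after checking that $h(x,y):=xyf(x,y)$ is again admissible (its primitive kernel is $f$, with $F(x)=x$ and $G(y)=y$, and it induces an undirected graph because multiplying by $uv$ preserves quadratic residuosity), the latter lies in $\mathcal{X}_d$ for a bounded $d\le 4$. As a finite union of families with property $QR(1/2)$ again has property $QR(1/2)$, establishing the entry bound for both types completes the proof.

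For the first subfamily, recall from the proof of \cref{prop:A^2} that, for $u,v$ such that none of $f(x,u),f(x,v),f(x,u)f(x,v)$ is a constant multiple of a square, the off-diagonal $(u,v)$ entry of $A(X_{f,q})^2$ equals
\[
\frac{q}{4}+\frac14\sum_{x\in\F_q}\chi(f(x,u))+\frac14\sum_{x\in\F_q}\chi(f(x,v))+\frac14\sum_{x\in\F_q}\chi\big(f(x,u)f(x,v)\big)+O(1).
\]
The key point is that for bidegree $(1,1)$ these inner sums are complete character sums over linear or quadratic polynomials, which can be evaluated \emph{exactly} instead of merely bounded by \cref{Weil}. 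Take $B=\{-b/a\}$ when $a\neq0$ and $B=\varnothing$ otherwise. For $u\notin B$ the polynomial $f(x,u)=(au+b)x+(cu+d)$ is genuinely linear, so $\sum_{x}\chi(f(x,u))=0$ exactly, and likewise for $v$. For distinct $u,v\notin B$, the roots $-(cu+d)/(au+b)$ and $-(cv+d)/(av+b)$ of $f(x,u)$ and $f(x,v)$ are distinct precisely because $ad-bc\neq0$, so $f(x,u)f(x,v)$ is a quadratic with nonzero discriminant, and the classical evaluation $\sum_x\chi(\alpha x^2+\beta x+\gamma)=-\chi(\alpha)$ (valid when $\alpha\neq0$ and $\beta^2\neq4\alpha\gamma$) yields $\sum_x\chi(f(x,u)f(x,v))=-\chi\big((au+b)(av+b)\big)=O(1)$. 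Hence every such entry is $q/4+O(1)$, and \cref{prop: 1/2} applies.

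For the second subfamily I would exploit the factorization $h(x,u)h(x,v)=uv\,x^2\,f(x,u)f(x,v)$: since $\chi(x^2)=1$ for $x\neq0$ while the $x=0$ term contributes $\chi(0)=0$, the cross sum collapses to $\chi(uv)\sum_{x\neq0}\chi(f(x,u)f(x,v))$, which is $O(1)$ by the computation above after removing the single $x=0$ term. Likewise $h(x,u)=u\,x\,f(x,u)$ is a quadratic with the two distinct roots $x=0$ and $-(cu+d)/(au+b)$ once $u$ avoids the enlarged bounded set $B=\{0,-b/a,-d/c\}$, so $\sum_x\chi(h(x,u))=O(1)$ as well. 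Thus the off-diagonal entries of $A(X_{xyf,q})^2$ are again $q/4+O(1)$ off a bounded set, and \cref{prop: 1/2} applies once more. The main obstacle is not structural but lies in this exact evaluation of the mixed sum $\sum_x\chi(f(x,u)f(x,v))$: \cref{Weil} alone gives only $O(\sqrt q)$, and obtaining the crucial $O(1)$ saving needed for \cref{prop: 1/2} requires both the explicit quadratic character-sum formula and the observation that admissibility forces $ad-bc\neq0$, guaranteeing that the two linear factors are never proportional.
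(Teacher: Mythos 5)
Your proposal is correct and follows essentially the same route as the paper: both derive the nondegeneracy condition from admissibility ($ad-bc\neq 0$ in your notation, $ab-cd\neq 0$ in the paper's), evaluate the relevant linear and quadratic character sums exactly rather than via Weil (your formula $\sum_x\chi(\alpha x^2+\beta x+\gamma)=-\chi(\alpha)$ for nonzero discriminant is precisely the result \cite[Theorem 5.48]{LN97} that the paper invokes), handle $X_{xyf,q}$ by factoring out $\chi(uv)\chi(x^2)$ over $x\neq 0$, and conclude via \cref{prop: 1/2}. The one caveat is your parenthetical justification that $xyf$ induces an undirected graph --- ``multiplying by $uv$ preserves quadratic residuosity'' is false when $uv$ is a nonsquare and exactly one of $f(u,v)$, $f(v,u)$ equals zero --- but this well-definedness point is implicitly presupposed by the corollary's statement and is not addressed in the paper's proof either, so it does not separate your argument from theirs.
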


\begin{proof}
Let $f(x,y)=d+ax+by+cxy \in \F_q[x,y]$ be an admissible polynomial for some $a,b,c,d\in \F_q$. Then the coefficients satisfy $ab-cd\neq 0$. Indeed, if $d \neq 0$, then we have the factorization of the form $f(x,y)=\frac{1}{d}(ax+d)(by+d)$, violating the assumption that $f$ is admissible. If $d=0$, it is easy to verify that $f$ is not admissible.  

Observe that for all but at most one $u \in \F_q$, the polynomial $f(x,u)\in\mathbb{F}_q[x]$ has degree $1$. Let $u,v \in \F_q$ with $u \neq v$. Then $f(x,u)f(x,v)$ has degree at most $2$, and $f(x,u)f(x,v)$ is not a constant. Suppose that the polynomial $f(x,u)f(x,v)$ is a constant multiple of a square of a polynomial, then we would have
\begin{align*}
   f(x,u)f(x,v) 
    &=(cu+a)(cv+a)\Big(x+\frac{d+bu}{cu+a}\Big)\Big(x+\frac{d+bv}{cv+a}\Big),
\end{align*}
and the constant terms of the linear factors should be the same to have a double root; this forces $\frac{d+bu}{cu+a}=\frac{d+bv}{cv+a}$, which happens only if $ab-cd=0$ as $ u\neq v$, a contradiction. Let $\chi$ be the quadratic character of $\F_q$. Then \cite[Theorem 5.48]{LN97} implies
$$
\bigg|\sum_{x \in \mathbb{F}_{q}^*}\chi(f(x,u)f(x,v))\bigg|\le 2.
$$
Hence, from equation~\eqref{char-cal}, it is easy to verify that the off-diagonal entry $(u,v)$ of $A(X_{f,q})^2$ is $q/4+O(1)$, unless $f(x,u)$ or $f(x,v)$ is a constant. 

Next, we consider the graph $X_{xyf,q}$. Let $u,v \in \F_q$ with $u \neq v$. We have 
\[\sum_{x \in \mathbb{F}_{q}} \chi\big(xuf(x,u) \cdot xvf(x,v)\big)=\chi(uv)\sum_{x \in \mathbb{F}_{q}^*}\chi(f(x,u)f(x,v))=O(1).\]
Using a similar argument, the off-diagonal entry $(u,v)$ of $A(X_{xyf,q})^2$ is $q/4+O(1)$, unless $uv=0$, or one of $f(x,u)$ and $f(x,v)$ is a constant. 
Hence \cref{prop: 1/2} implies the desired result.
\end{proof}

We conclude the paper with the proof of \cref{thm: construction intro}, \cref{cor: xfq}, and \cref{thm: lower bound of homo}.

\begin{proof}[Proof of \textup{\cref{thm: construction intro}}]
This follows from \cref{lem:fcomposeg}, \cref{cor: xfq} and \cref{cor:deg1}.    
\end{proof}

\begin{proof}[Proof of \textup{\cref{cor: xfq}}]
This follows from \cref{thm: 3/4 intro}, \cref{thm: construction intro} and \cref{thm: 1.1}.    
\end{proof}

\begin{proof}[Proof of \textup{\cref{thm: lower bound of homo}}]
Let $I$ be a maximum independent set in $X_{f,q}$ and let $r$ be a non-square in $\F_q$. Then $rI$ is a clique in $X_{f,q}$. Indeed, since the degree of $f$ is $d$ and $d$ is odd, $f(rx, ry)=r^df(x,y)$ is a square for each $x,y \in I$. It follows that $\omega(X_{f,q}) \geq \alpha(X_{f,q})$. 
Note that we have $\max \{\omega(X_{f,q}), \alpha(X_{f,q})\} \geq (1-o(1))\log_{3.501} q$ by \cref{cor: xfq}. Therefore, $\omega(X_{f,q})\geq (1-o(1)) \log_{3.501} q$. 
Similarly, if $X_{f,q} \in \mathcal{H}_d$, we have $\omega(X_{f,q})\geq (1-o(1)) \log_{2.936} q$.  
\end{proof}

\section*{Acknowledgments}
The authors thank Hong Liu, J\'ozsef Solymosi, and Andrew Thomason for helpful discussions. S. Kim is grateful to the Max Planck Institute for Mathematics in Bonn for its hospitality and financial support. C.H. Yip was supported in part by an NSERC fellowship.  S. Yoo was supported by the KIAS Individual Grant (CG082701) at the Korea Institute for Advanced Study and by the Institute for Basic Science (IBS-R029-C1). 

\medskip

\bibliographystyle{abbrv}
\bibliography{references}

\end{document}